\documentclass[11pt,a4paper]{article}

\usepackage[T1]{fontenc}
\usepackage[utf8]{inputenc}
\usepackage{lmodern}
\usepackage{amsmath,amssymb,amsthm,mathtools,mathrsfs}
\usepackage{aliascnt}
\usepackage[a4paper,left=28mm,right=28mm,top=27mm,bottom=27mm]{geometry}
\usepackage{microtype}
\usepackage{xcolor}
\usepackage{cite}
\usepackage[colorlinks=true,linkcolor=blue,citecolor=blue,urlcolor=blue]{hyperref}
\newcommand{\doi}[1]{\href{https://doi.org/#1}{\nolinkurl{doi:#1}}}
\usepackage[capitalize,noabbrev]{cleveref}
\AtBeginDocument{}

\numberwithin{equation}{section}
\allowdisplaybreaks[1]
\newtheorem{theorem}{Theorem}[section]
\newaliascnt{proposition}{theorem}
\newtheorem{proposition}[proposition]{Proposition}
\aliascntresetthe{proposition}
\newaliascnt{lemma}{theorem}
\newtheorem{lemma}[lemma]{Lemma}
\aliascntresetthe{lemma}
\newaliascnt{corollary}{theorem}
\newtheorem{corollary}[corollary]{Corollary}
\aliascntresetthe{corollary}
\theoremstyle{definition}
\newaliascnt{assumption}{theorem}
\newtheorem{assumption}[assumption]{Assumption}
\aliascntresetthe{assumption}
\theoremstyle{definition}
\newaliascnt{remark}{theorem}
\newtheorem{remark}[remark]{Remark}
\aliascntresetthe{remark}
\newaliascnt{definition}{theorem}
\newtheorem{definition}[definition]{Definition}
\aliascntresetthe{definition}

\crefname{equation}{equation}{equations}
\crefname{theorem}{theorem}{theorems}
\crefname{proposition}{proposition}{propositions}
\crefname{lemma}{lemma}{lemmas}
\crefname{corollary}{corollary}{corollaries}
\crefname{assumption}{assumption}{assumptions}
\crefname{definition}{definition}{definitions}
\crefname{section}{section}{sections}

\newcommand{\R}{\mathbb{R}}
\newcommand{\Z}{\mathbb{Z}}
\newcommand{\eps}{\varepsilon}

\newcommand{\diver}{\operatorname{div}}
\newcommand{\esssup}{\operatorname*{ess\,sup}}
\newcommand{\norm}[2]{\left\|#1\right\|_{#2}}
\newcommand{\abs}[1]{\left|#1\right|}
\newcommand{\dual}[2]{\left\langle #1,#2\right\rangle}

\newcommand{\bx}{\boldsymbol{x}}
\newcommand{\by}{\boldsymbol{y}}
\newcommand{\be}{\boldsymbol{e}}
\newcommand{\bn}{\boldsymbol{n}}
\newcommand{\bu}{\boldsymbol{u}}
\newcommand{\bv}{\boldsymbol{v}}
\newcommand{\bw}{\boldsymbol{w}}
\newcommand{\bU}{\boldsymbol{U}}
\newcommand{\bz}{\boldsymbol{z}}
\newcommand{\bphi}{\boldsymbol{\phi}}
\newcommand{\bfv}{\boldsymbol{f}}
\newcommand{\bg}{\boldsymbol{g}}

\newcommand{\bdelta}{\boldsymbol{\delta}}

\newcommand{\bM}{\mathbf{M}}
\newcommand{\bI}{\mathbf{I}}
\newcommand{\bW}{\mathbf{W}}

\newcommand{\bQ}{\boldsymbol{Q}}

\newcommand{\cD}{\mathcal{D}}
\newcommand{\cR}{\mathscr{R}}
\newcommand{\cB}{\mathscr{B}}
\newcommand{\cE}{\mathscr{E}}
\newcommand{\cK}{\mathcal{K}}
\newcommand{\cX}{\mathcal{X}}
\newcommand{\cS}{\mathscr{S}}
\newcommand{\cT}{\mathscr{T}}

\newcommand{\Oe}{\Omega_{\eps}}
\newcommand{\He}{\mathcal{H}_{\eps}}
\newcommand{\ke}{\kappa_{\eps}}
\newcommand{\rhoe}{\rho_{\eps}}
\newcommand{\ue}{\boldsymbol{u}_{\eps}}
\newcommand{\te}{\vartheta_{\eps}}
\newcommand{\pe}{p_{\eps}}
\newcommand{\rhozeroe}{\rho_{\eps}^{0}}
\newcommand{\uzeroe}{\boldsymbol{u}_{\eps}^{0}}

\newcommand{\rhof}{\rho}
\newcommand{\uf}{\boldsymbol{u}}
\newcommand{\tf}{\vartheta}
\newcommand{\pf}{p}

\newcommand{\kf}{\kappa_{\mathrm f}}
\newcommand{\ks}{\kappa_{\mathrm s}}
\newcommand{\kmin}{\kappa_{\mathrm{min}}}
\newcommand{\kmax}{\kappa_{\mathrm{max}}}
\newcommand{\mumin}{\mu_{\mathrm{min}}}
\newcommand{\mumax}{\mu_{\mathrm{max}}}
\newcommand{\rhomin}{\rho_{\mathrm{min}}}
\newcommand{\rhomax}{\rho_{\mathrm{max}}}

\title{Quantitative critical homogenization of a three-dimensional non-homogeneous thermoviscous fluid in perforated domains}
\author{Jiaojiao Pan\footnote{School of Mathematics, Nanjing University, Nanjing 210093, China, panjiaojiao.math@gmail.com} \and Luqi Wang\footnote{School of Mathematics, Nanjing University, Nanjing 210093, China, wangluqi@nju.edu.cn}}
\date{}

\begin{document}
\maketitle
\vspace{-1.5em}
\begin{abstract}
In this paper, we consider a three-dimensional non-homogeneous incompressible fluid in a bounded periodically perforated domain at the critical Stokes-capacity scale. The density obeys a transport equation. The viscosity depends on a quasi-static temperature perturbation, which solves a uniformly elliptic transmission problem with insulated outer boundary conditions and zero spatial mean. The limiting momentum equation contains a Brinkman resistance term generated by the critical perforation regime. For each fixed perforated domain, we construct global finite-energy weak solutions and derive a quantitative relative-energy stability estimate with respect to regular solutions of the homogenized system. As long as a regular effective solution exists, the squared relative error between any microscopic weak solution and the effective solution is bounded by the initial mismatch plus $O(\eps^{2})$. For well-prepared data, the density and uncorrected velocity converge at order $O(\eps)$ in $L^\infty(0,T;L^2)$, the temperature at the same order in both $L^\infty((0,T)\times\Omega)$ and $L^2(0,T;H^1)$, and the corrected velocity in $L^2(0,T;H^1)$. The key estimates are an $O(\eps)$ cell-capacity residual in the dual energy norm, an $O(\eps)$ corrector-gradient bound in $L^{6/5}$, and fixed-domain thermal stability. The critical boundary layers retain order-one viscous energy, represented in the limit by the Brinkman dissipation, so the uncorrected velocity gradient does not converge strongly to the effective gradient whenever the effective velocity is nonzero. Finally, we prove local existence of regular effective solutions for smooth data satisfying a finite set of boundary compatibility conditions, which closes the stability argument.
\end{abstract}

\medskip
\noindent\textbf{Keywords.} Critical homogenization; Non-homogeneous incompressible flow; Brinkman resistance; Temperature-dependent viscosity; Quantitative corrector estimate.

\medskip
\noindent\textbf{2020 Mathematics Subject Classification.} 35B27, 76M50, 35Q30, 76D05, 80A20.

\section{Introduction}\label{sec:introduction}

Let $\Omega\subset\R^{3}$ be a bounded domain.  For $\eps>0$, we consider the following microscopic model
\begin{equation}\label{eq:micro-intro}
\left\{
\begin{aligned}
 \partial_t\rhoe+\diver(\rhoe\ue)&=0,
 &&\text{in }(0,T)\times\Oe,\\
 \diver\ue&=0,
 &&\text{in }(0,T)\times\Oe,\\
 \partial_t(\rhoe\ue)+\diver(\rhoe\ue\otimes\ue)
 -\diver\!\bigl(2\mu(\te)\cD(\ue)\bigr)+\nabla\pe
 &=\rhoe\bfv-\te\bg,
 &&\text{in }(0,T)\times\Oe,\\
 -\diver(\ke\nabla\te)&=\widetilde{\ue}\cdot\bg,
 &&\text{in }(0,T)\times\Omega.
\end{aligned}
\right.
\end{equation}
Here $\Oe$ denotes the perforated fluid domain and $\He$ the union of the holes (or solid obstacles), whose precise periodic geometry is specified in \eqref{eq:geometry}. We write
\(
 (\rhoe,\ue,\te,\pe)
\)
for the density, velocity, temperature perturbation and pressure, respectively, and use the conventions $(\nabla\bv)_{ij}=\partial_jv_i$ and $A:B=\sum_{i,j}A_{ij}B_{ij}$. For a vector field $\bv$, define the symmetric gradient and the zero extension of the microscopic velocity by
\[
 \cD(\bv):=\frac12\bigl(\nabla\bv+(\nabla\bv)^{\textup T}\bigr),
 \qquad
 \widetilde{\ue}:=
 \begin{cases}
  \ue & \text{in }\Oe,\\
  \boldsymbol0 & \text{in }\He.
 \end{cases}
\]
Conductivity is piecewise constant,
\[
 \ke=\kf\mathbf 1_{\Oe}+\ks\mathbf 1_{\He},
 \qquad \kf>0,\quad \ks>0,
\]
while $\mu:\R\longrightarrow(0,\infty)$ is the viscosity, $\bfv$ is the prescribed body force and $\bg$ is a conservative field satisfying the assumptions in \eqref{eq:data-assumptions}. The temperature equation in \eqref{eq:micro-intro} is quasi-static and is supplemented by the insulating boundary condition in \eqref{eq:micro-boundary}. Since $\bg=\nabla F$, incompressibility and the zero normal trace imply the Neumann compatibility condition: 
\[
 \int_\Omega \widetilde{\ue}\cdot\bg\,\mathrm d\bx
 =\int_\Omega \widetilde{\ue}\cdot\nabla F\,\mathrm d\bx
 =-\int_\Omega F\,\diver\widetilde{\ue}\,\mathrm d\bx
 +\int_{\partial\Omega}F\,\widetilde{\ue}\cdot\bn\,\mathrm dS=0.
\]
We regard $\te$ as a temperature perturbation measured relative to a fixed reference temperature. Henceforth, we use the shorter term temperature for this normalized perturbation. The condition $\int_\Omega\te(t)\,\mathrm d\bx=0$ fixes the reference level. Since the constitutive law contains $\mu(\te)$, this normalization is part of the coupled model rather than a harmless choice of additive constant.

Let $a_{\eps}$ denote the size of the holes. Following the classical scaling introduced by Allaire~\cite{AllaireI,AllaireII} and the unified formulation developed by Lu~\cite{Lu2020}, we set
\[
 \sigma_{\eps}^{2}=\frac{\eps^{3}}{a_{\eps}}.
\]
For the geometry considered here, $a_{\eps}=\eps^{3}$ and hence $\sigma_{\eps}=1$. At this scale the total Stokes capacity is of order one. Accordingly, the effective momentum balance contains both the macroscopic viscous stress and a Brinkman resistance term.  The associated exterior Stokes problem is
\[
\left\{
\begin{aligned}
 -\Delta\boldsymbol{V}^{i}+\nabla Q^{i}&=\boldsymbol{0},
 &\qquad \diver\boldsymbol{V}^{i}&=0,
 &&\text{in }\R^{3}\setminus\overline{T_{0}},\\
 \boldsymbol{V}^{i}&=\boldsymbol{0},
 &&&&\text{on }\partial T_{0},\\
 \boldsymbol{V}^{i}(\bx)&\to\be_{i},
 &\qquad Q^i(\bx)&\to0,
 &&\text{as }\abs{\bx}\to\infty.
\end{aligned}
\right.
\]
Here $i=1,2,3$, $T_{0}$ is the reference hole and $\{\be_i\}_{i=1}^{3}$ is the canonical basis of $\R^3$.  Writing the corresponding velocity--pressure pairs as $(\boldsymbol V^i,Q^i)$, define the resistance matrix by
\begin{equation}\label{eq:resistance-matrix}
 (\bM_{0})_{ij}
 :=\int_{\R^{3}\setminus\overline{T_{0}}}
 \nabla\boldsymbol{V}^{i}:\nabla\boldsymbol{V}^{j}\,\mathrm d\bx.
\end{equation}
Symmetry of $\bM_0$ follows directly from the definition in \eqref{eq:resistance-matrix}. Moreover, if $\boldsymbol\xi\in\R^3$ and $\boldsymbol V_\xi=\sum_i\xi_i\boldsymbol V^i$, then
\[
 \boldsymbol\xi\cdot\bM_0\boldsymbol\xi
 =\int_{\R^3\setminus\overline{T_0}}|\nabla\boldsymbol V_\xi|^2\,\mathrm d\bx.
\]
Since $\boldsymbol V_\xi$ has zero trace on $\partial T_0$ and tends to $\boldsymbol\xi$ at infinity, the right-hand side vanishes only when $\boldsymbol\xi=\boldsymbol0$. Hence $\bM_0$ is positive definite.

The closest qualitative critical heat-conducting limit is due to Feireisl, Lu and Sun~\cite{FLS}, who derived a Brinkman law for a three-dimensional non-homogeneous heat-conducting fluid with temperature-dependent viscosity and oscillatory conductivity, under homogeneous Dirichlet data for the temperature. The present model instead imposes a thermally insulated homogeneous Neumann condition on the outer boundary. This changes the thermal functional setting: constants belong to the kernel of the elliptic operator, solvability requires a compatibility condition, and coercivity is recovered on the zero-mean subspace through the Poincar\'e--Wirtinger inequality. In our system this compatibility condition follows from the conservative forcing $\bg=\nabla F$ and incompressibility. For the non-homogeneous incompressible Navier--Stokes system without the present thermal coupling, the critical density-dependent Brinkman regime is treated in \cite{Pan2025}. In the compressible low-Mach setting, Bella and Oschmann~\cite{BO2022} studied a critical perforation problem. Quantitative estimates for the linear Stokes--Brinkman homogenization problem in dilute perforated domains were obtained by Jing, Lu and Prange~\cite{JLP}, while related quantitative and inviscid-limit questions were studied in \cite{HJ2024,Hoefer2023}. In the larger-hole regime $1<\alpha<3$, Darcy-type limits were considered in \cite{BOP}. By contrast, a closely related heat-conducting system with $a_\eps=\eps^\alpha$, $\alpha>3$ is considered in \cite{LPW2026}, in which the total Stokes capacity is $O(\eps^{\alpha-3})$ and no Brinkman term survives in the limit. Thus $\alpha=3$ is the critical threshold in which the total Stokes capacity remains of order one.

The present work provides quantitative estimates for this critical regime. Density transport, temperature-dependent viscosity and a critical Stokes restriction operator are combined within a single relative-energy argument. The restriction estimates give an $O(\eps)$ bound for the corrector gradient in $L^{6/5}$, allowing it to be paired with the $L^6$ relative velocity, and the corresponding weighted cell-capacity residual includes the pressure--coefficient commutator and divergence repair required by the variable viscosity. On the thermal side, we establish fixed-domain zero-mean Neumann stability in both $H^1$ and $L^\infty$. The $L^\infty$ bound controls $\mu(\te)-\mu(\tf)$ in $L^\infty$ and closes the variable-viscosity relative-energy estimate. Together with the renormalized density estimate, these bounds yield an $O(\eps^2)$ squared stability remainder and order-$O(\eps)$ convergence for well-prepared data, while the dissipation limit identifies the order-one boundary-layer contribution. The restriction error itself is sharp at order $O(\eps)$ in $L^6(\Omega)$ for nonzero comparison fields, as shown in \Cref{prop:restriction-sharp-scale}. Relative-energy methods for weak--strong stability of non-homogeneous incompressible flows provide complementary fixed-domain background (see, e.g., \cite{CBSV}).

At the macroscopic level, the limiting fields satisfy
\begin{equation}\label{eq:effective-intro}
\left\{
\begin{aligned}
 \partial_t\rhof+\diver(\rhof\uf)&=0,
 &&\text{in }(0,T)\times\Omega,\\
 \diver\uf&=0,
 &&\text{in }(0,T)\times\Omega,\\
 \partial_t(\rhof\uf)+\diver(\rhof\uf\otimes\uf)
 -\diver\!\bigl(2\mu(\tf)\cD(\uf)\bigr)
 +\mu(\tf)\bM_{0}\uf+\nabla\pf
 &=\rhof\bfv-\tf\bg,
 &&\text{in }(0,T)\times\Omega,\\
 -\kf\Delta\tf&=\uf\cdot\bg,
 &&\text{in }(0,T)\times\Omega.
\end{aligned}
\right.
\end{equation}
Here $(\rhof,\uf,\tf,\pf)$ are the limiting density, velocity, temperature and pressure. On $\partial\Omega$, the effective velocity satisfies the no-slip condition, whereas $\tf$ obeys the homogeneous Neumann condition and the zero-mean reference condition in \eqref{eq:effective-boundary}. Since
\[
 \norm{\ke-\kf}{L^p(\Omega)}
 \le C|\He|^{1/p}=O(\eps^{6/p})
 \qquad\text{for }1\le p<\infty,
\]
the conductivity coefficient converges strongly to $\kf$ in $L^p(\Omega)$ for $1\le p<\infty$. Moreover, as the conductivity defect is supported in $\He$, the quantitative estimate in \Cref{lem:thermal} recovers $\kf$ as the limiting thermal coefficient.

\paragraph{Organization of the paper.} \Cref{sec:setting} states the microscopic and effective problems and the main theorem. \Crefrange{sec:corrector}{sec:thermal} establish the restriction and thermal estimates, and \Crefrange{sec:relative}{sec:dissipation} contain the relative-energy argument and dissipation limit. Appendix~\ref{sec:effective-existence} proves local regular solvability of the effective system, while Appendix~\ref{sec:micro-existence} constructs global microscopic finite-energy weak solutions.

\section{Problem formulation and main result}\label{sec:setting}

Let $Y=(-\tfrac12,\tfrac12)^{3}$ and assume that $\Omega$ is a bounded connected domain of class $C^{2,\beta}$, $\beta\in(0,1)$. Fix a simply connected reference hole $T_{0}\subset\subset B(\boldsymbol{0},1/8)$ of class $C^{2,\beta}$ with connected exterior. Set
\begin{equation}\label{eq:geometry}
 \cK_{\eps}
 =\bigl\{\boldsymbol{k}\in\Z^{3}:\eps(\boldsymbol{k}+\overline Y)\subset\subset\Omega\bigr\},
 \qquad
 T_{\eps,\boldsymbol{k}}
 =\eps\boldsymbol{k}+\eps^{3}T_{0},
 \qquad
 \He=\bigcup_{\boldsymbol{k}\in\cK_{\eps}}\overline{T_{\eps,\boldsymbol{k}}},
 \qquad
 \Oe=\Omega\setminus\He,
\end{equation}
where the hole scale is $a_{\eps}=\eps^{3}$, corresponding to the critical exponent $\alpha=3$ in the convention $a_{\eps}=\eps^{\alpha}$. Since the admissible centers lie on the $\eps$-lattice and each hole has volume $O(\eps^9)$,
\begin{equation}\label{eq:volume-holes}
 \#\cK_{\eps}\le C\eps^{-3},
 \qquad
 \abs{\He}\le C\eps^{6}.
\end{equation}
where we used $\#\cK_{\eps}$ to denote the number of the holes. Assume
\begin{equation}\label{eq:coefficients}
 \mu\in W^{1,\infty}(\R),
 \qquad
 0<\mumin\le\mu(z)\le\mumax<\infty,
 \qquad
 \kf>0,\quad \ks>0.
\end{equation}
Set $\kmin=\min\{\kf,\ks\}$ and $\kmax=\max\{\kf,\ks\}$. For the external fields, assume
\begin{equation}\label{eq:data-assumptions}
 \bfv\in W^{1,\infty}\bigl((0,T)\times\Omega;\R^{3}\bigr),
 \qquad
 F\in W^{2,\infty}(\Omega),
 \qquad
 \bg=\nabla F.
\end{equation}
The potential $F$ is time independent, so $\partial_t\bg=\boldsymbol0$. Fix $0<\rhomin<\rhomax$. For the initial density and velocity, assume
\begin{equation}\label{eq:initial-data-assumptions}
 \rhomin\le\rhozeroe\le\rhomax\quad\text{a.e.\ in }\Oe,
 \qquad
 \uzeroe\in L^{2}(\Oe;\R^{3}),
 \qquad
 \diver\uzeroe=0,
 \qquad
 \uzeroe\cdot\bn=0\quad\text{on }\partial\Oe,
\end{equation}
in the weak normal-trace sense. Extend $\rhozeroe$ to $\He$ by the constant $\rhomin$ and denote the extension by $\widehat{\rhozeroe}$.

Write $\bn$ for the outward unit normal of the domain under consideration, and let $\operatorname{Tr}_{D}$ denote the boundary trace taken from the interior of a domain $D$. With the preceding convention for the normalized temperature perturbation, the microscopic boundary conditions and the zero-mean reference condition are
\begin{equation}\label{eq:micro-boundary}
 \ue=\boldsymbol{0}\quad\text{on }(0,T)\times\partial\Oe,
 \qquad
 \ke\nabla\te\cdot\bn=0\quad\text{on }(0,T)\times\partial\Omega,
 \qquad
 \int_{\Omega}\te(t)\,\mathrm d\bx=0.
\end{equation}
The homogeneous Neumann condition represents thermal insulation at the outer boundary. A single temperature field $\te\in H^1(\Omega)$ enforces equality of the fluid and solid traces on every interface $\partial T_{\eps,\boldsymbol k}$. The variational heat equation also gives continuity of the normal heat flux. Let $\bn_{\mathrm f}$ and $\bn_{\mathrm s}$ denote the outward normals of $\Oe$ and $T_{\eps,\boldsymbol k}$, respectively. Then
\[
 \operatorname{Tr}_{\Oe}\te
 =\operatorname{Tr}_{T_{\eps,\boldsymbol k}}\te,
 \qquad
 \kf\nabla\te|_{\Oe}\cdot\bn_{\mathrm f}
 +\ks\nabla\te|_{T_{\eps,\boldsymbol k}}\cdot\bn_{\mathrm s}=0
 \quad\text{on }\partial T_{\eps,\boldsymbol k},
\]
where the first identity is understood in $H^{1/2}(\partial T_{\eps,\boldsymbol k})$ and the flux identity in $H^{-1/2}(\partial T_{\eps,\boldsymbol k})$.

 \begin{definition}[Finite-energy weak solution]\label{def:finite-energy-weak}
A finite-energy weak solution is a triple $(\rhoe,\ue,\te)$ such that $\rhoe$ is a renormalized density in the sense of the transport theory in \cite[Section~II]{DiPernaLions} and the variable-density framework of \cite{Lions}, $\ue$ is a finite-energy weak velocity, and $\te$ satisfies the variational heat equation together with the zero-mean condition in \eqref{eq:micro-boundary}. More precisely,
\[
\begin{gathered}
 \rhomin\le\rhoe\le\rhomax,\qquad
 \te\in L^{2}(0,T;H^{1}(\Omega)),\\
 \ue\in L^{\infty}(0,T;L^{2}(\Oe;\R^{3}))
 \cap L^{2}(0,T;H_{0}^{1}(\Oe;\R^{3})),
 \qquad
 \diver\ue=0\quad\text{in }\mathcal D'((0,T)\times\Oe).
\end{gathered}
\]
The velocity admits a representative in $C_{\mathrm{weak}}([0,T];L^2(\Oe;\R^3))$ with $\ue(0)=\uzeroe$. Extend the density to the fixed domain by transport with the zero-extended velocity and denote the resulting field by $\widehat{\rhoe}$, with initial value $\widehat{\rhozeroe}$. For $b\in C^{1}([\rhomin,\rhomax])$ and $\psi\in C_{c}^{1}([0,T)\times\Omega)$,
\begin{equation}\label{eq:renormalized-continuity}
 \int_{0}^{T}\!\int_{\Omega}
 b(\widehat{\rhoe})
 \bigl(\partial_t\psi+\widetilde{\ue}\cdot\nabla\psi\bigr)\,\mathrm d\bx\,\mathrm dt
 +\int_{\Omega}b(\widehat{\rhozeroe})\psi(0)\,\mathrm d\bx=0.
\end{equation}
For divergence-free $\bphi\in C_{c}^{1}([0,T)\times\Oe;\R^{3})$,
\begin{equation}\label{eq:weak-momentum}
\begin{aligned}
 &\int_{0}^{T}\!\int_{\Oe}
 \Bigl[
   \rhoe\ue\cdot\partial_t\bphi
  +\rhoe\ue\otimes\ue:\nabla\bphi
  -2\mu(\te)\cD(\ue):\cD(\bphi)
  +(\rhoe\bfv-\te\bg)\cdot\bphi
 \Bigr] \,\mathrm d\bx\,\mathrm dt\\
 &\qquad
 +\int_{\Oe}\rhozeroe\uzeroe\cdot\bphi(0)\,\mathrm d\bx=0.
\end{aligned}
\end{equation}
For a.e. $t\in(0,T)$ and $\varphi\in H^{1}(\Omega)$,
\begin{equation}\label{eq:weak-heat}
 \int_{\Omega}\ke\nabla\te(t)\cdot\nabla\varphi\,\mathrm d\bx
 =\int_{\Omega}\widetilde{\ue}(t)\cdot\bg\,\varphi\,\mathrm d\bx.
\end{equation}
Finally, for a.e. $t\in(0,T)$,
\begin{equation}\label{eq:combined-energy}
\begin{aligned}
 &\frac12\int_{\Oe}\rhoe(t)\abs{\ue(t)}^{2}\,\mathrm d\bx
 +\int_{0}^{t}\!\int_{\Oe}2\mu(\te)\abs{\cD(\ue)}^{2}\,\mathrm d\bx\,\mathrm ds
 +\int_{0}^{t}\!\int_{\Omega}\ke\abs{\nabla\te}^{2}\,\mathrm d\bx\,\mathrm ds\\
 &\qquad\le
 \frac12\int_{\Oe}\rhozeroe\abs{\uzeroe}^{2}\,\mathrm d\bx
 +\int_{0}^{t}\!\int_{\Oe}\rhoe\bfv\cdot\ue\,\mathrm d\bx\,\mathrm ds.
\end{aligned}
\end{equation}
\end{definition}

The Neumann compatibility condition is automatic for any admissible velocity: since $\bg=\nabla F$, $\diver\widetilde{\ue}=0$, and $\widetilde{\ue}$ has zero normal trace, integration by parts gives $\int_\Omega\widetilde{\ue}\cdot\bg\,\mathrm d\bx=0$. Testing \eqref{eq:weak-heat} by $\te(t)$ and using \eqref{eq:combined-energy} then gives, for a.e. $t\in(0,T)$,
\[
 \int_{\Omega}\ke\abs{\nabla\te}^{2}\,\mathrm d\bx
 =\int_{\Oe}\te\bg\cdot\ue\,\mathrm d\bx.
\]
Substituting this identity into \eqref{eq:combined-energy}, we obtain the kinetic-energy inequality
\begin{equation}\label{eq:kinetic-energy}
\begin{aligned}
 &\frac12\int_{\Oe}\rhoe(t)\abs{\ue(t)}^{2}\,\mathrm d\bx
 +\int_{0}^{t}\!\int_{\Oe}2\mu(\te)\abs{\cD(\ue)}^{2}\,\mathrm d\bx\,\mathrm ds\\
 &\qquad\le
 \frac12\int_{\Oe}\rhozeroe\abs{\uzeroe}^{2}\,\mathrm d\bx
 +\int_{0}^{t}\!\int_{\Oe}
 (\rhoe\bfv-\te\bg)\cdot\ue\,\mathrm d\bx\,\mathrm ds.
\end{aligned}
\end{equation}

For the effective problem \eqref{eq:effective-intro}, the boundary conditions and normalizations are
\begin{equation}\label{eq:effective-boundary}
 \uf=\boldsymbol{0},
 \qquad
 \partial_{\bn}\tf=0
 \quad\text{on }(0,T)\times\partial\Omega,
 \qquad
 \int_{\Omega}\tf(t)\,\mathrm d\bx=0,
 \qquad
 \int_{\Omega}\pf(t)\,\mathrm d\bx=0.
\end{equation}
The effective initial data are
\begin{equation}\label{eq:effective-initial}
 \rhof(0,\cdot)=\rho_{0},
 \qquad
 \uf(0,\cdot)=\boldsymbol{u}_{0}
 \quad\text{in }\Omega.
\end{equation}
Under the smoothness and finite initial-jet compatibility assumptions of \Cref{thm:effective-local}, there exists $T_*>0$ such that a regular effective solution exists on $[0,T_*]$. Throughout the comparison argument, $T$ is arbitrary with $0<T\le T_*$.

\begin{assumption}[Regular effective solution on the comparison interval]\label{ass:strong}
The effective system \eqref{eq:effective-intro}, with \eqref{eq:effective-boundary} and initial data \eqref{eq:effective-initial}, admits a solution $(\rhof,\uf,\tf,\pf)$ on $[0,T]$ such that
\[
 \rhomin\le\rhof\le\rhomax,
 \qquad
 \rhof\in W^{1,\infty}((0,T)\times\Omega),
\]
\[
 \uf\in W^{1,\infty}(0,T;W^{2,\infty}(\Omega;\R^{3})) ,
 \qquad
 \tf\in L^{\infty}(0,T;W^{2,\infty}(\Omega)),
\]
and
\[
 \pf\in L^{\infty}(0,T;L^{2}(\Omega)),
 \qquad
 \int_{\Omega}\pf(t)\,\mathrm d\bx=0
 \quad\text{for a.e. }t\in(0,T).
\]
\end{assumption}

Assumption~\ref{ass:strong} places both $\uf$ and $\partial_t\uf$ in the domain of the restriction operator and supplies the coefficient bounds used in the comparison argument. Accordingly, the effective momentum equation is interpreted distributionally, and for solenoidal comparison fields only the distributional gradient $\nabla\pf$ appears. Appendix~\ref{sec:effective-existence} then verifies the assumption locally for smooth data satisfying the finite compatibility hierarchy.

For the quantitative comparison, let $\cR_{\eps}$ denote the solenoidal restriction constructed in \Cref{prop:restriction} and define
\[
 \bU_{\eps}:=\cR_{\eps}\uf,
 \qquad
 \boldsymbol{e}_{\eps}:=\ue-\bU_{\eps}\quad\text{in }\Oe,
 \qquad
 q_{\eps}:=\widehat{\rhoe}-\rhof\quad\text{in }\Omega.
\]
The comparison error is measured by the relative energy
\begin{equation}\label{eq:relative-energy}
 \cE_{\eps}(t)
 :=\frac12\norm{q_{\eps}(t)}{L^{2}(\Omega)}^{2}
 +\frac12\int_{\Oe}\rhoe(t)
 \abs{\boldsymbol{e}_{\eps}(t)}^{2}\,\mathrm d\bx,
 \qquad 0\le t\le T.
\end{equation}
At $t=0$, the initial data give
\[
 \cE_{\eps}(0)
 =\frac12\norm{\widehat{\rhozeroe}-\rho_{0}}{L^{2}(\Omega)}^{2}
 +\frac12\int_{\Oe}\rhozeroe
 \abs{\uzeroe-\cR_{\eps}\boldsymbol{u}_{0}}^{2}\,\mathrm d\bx.
\]

\begin{theorem}[Critical quantitative homogenization and stability]\label{thm:main}
Assume the geometry \eqref{eq:geometry} and the hypotheses \eqref{eq:coefficients}--\eqref{eq:initial-data-assumptions}. Let $(\rhoe,\ue,\te)$ be a finite-energy weak solution of the microscopic system \eqref{eq:micro-intro} in the sense of \Cref{def:finite-energy-weak}. Let $(\rhof,\uf,\tf,\pf)$ be an effective solution satisfying \Cref{ass:strong}. With $\cE_{\eps}$ defined by \eqref{eq:relative-energy}, there exist $\eps_{0}>0$ and $C>0$  independent of $\eps$, such that for $0<\eps<\eps_{0}$,
\begin{equation}\label{eq:main-estimate}
\begin{aligned}
 &\esssup_{0\le t\le T}
 \left[
  \norm{q_{\eps}(t)}{L^{2}(\Omega)}^{2}
  +\int_{\Oe}\rhoe(t)\abs{\boldsymbol{e}_{\eps}(t)}^{2}\,\mathrm d\bx
 \right]\\
 &\quad
 +\int_{0}^{T}\norm{\cD(\boldsymbol{e}_{\eps})}{L^{2}(\Oe)}^{2}\,\mathrm dt
 +\int_{0}^{T}\norm{\te-\tf}{H^{1}(\Omega)}^{2}\,\mathrm dt
 \le C\bigl(\cE_{\eps}(0)+\eps^{2}\bigr),
\end{aligned}
\end{equation}
and
\begin{equation}\label{eq:main-convergences}
\begin{aligned}
 &\norm{\widehat{\rhoe}-\rhof}{L^{\infty}(0,T;L^{2}(\Omega))}
 +\norm{\widetilde{\ue}-\uf}{L^{\infty}(0,T;L^{2}(\Omega))}
 +\norm{\te-\tf}{L^{\infty}((0,T)\times\Omega)}\\
 &\qquad
 +\norm{\te-\tf}{L^{2}(0,T;H^{1}(\Omega))}
 +\norm{\ue-\cR_\eps\uf}{L^{2}(0,T;H^{1}(\Oe))}
 \le C\bigl(\cE_{\eps}(0)^{1/2}+\eps\bigr).
\end{aligned}
\end{equation}
If $\cE_{\eps}(0)=O(\eps^{2})$, all five errors in \eqref{eq:main-convergences} are $O(\eps)$. Moreover,
\[
 \norm{\mu(\te)-\mu(\tf)}{L^\infty((0,T)\times\Omega)}
 \le C\bigl(\cE_\eps(0)^{1/2}+\eps\bigr).
\]
\end{theorem}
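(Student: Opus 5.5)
The plan is a relative-energy (weak--strong) argument on the fixed perforated domain, closed with Gronwall. It uses three ingredients developed later in the paper, which I take as available. First, the solenoidal restriction $\cR_\eps$ of \Cref{prop:restriction}. It gives $\cR_\eps\uf\in H^1_0(\Oe)$ divergence free, with $\norm{\cR_\eps\uf-\uf}{L^6}\le C\eps$, with $\nabla(\cR_\eps\uf-\uf)$ split into a cell-capacity part plus a remainder that is $O(\eps)$ in $L^{6/5}$, and with the same bounds for $\partial_t\uf$. Second, the cell-capacity residual estimate: the functional $\bw\mapsto\int_{\Oe}2\mu(\tf)\cD(\cR_\eps\uf):\cD(\bw)-\int_\Omega\bigl(2\mu(\tf)\cD(\uf):\nabla\bw+\mu(\tf)\bM_0\uf\cdot\bw\bigr)$ is $O(\eps)$ in the dual of $H^1_0(\Oe)$. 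This estimate includes the pressure--coefficient commutator and the divergence repair. Third, the thermal stability of \Cref{lem:thermal}: for a.e. $t$,
\[
 \norm{\te-\tf}{H^1(\Omega)}+\norm{\te-\tf}{L^\infty(\Omega)}\le C\bigl(\norm{\widetilde{\ue}-\uf}{L^2(\Omega)}+\eps\bigr).
\]
Here $\eps$ comes from the conductivity defect on $\He$; the $L^\infty$ part should follow from De Giorgi/Moser with the $L^2$ source.

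\textbf{Step 1 (momentum relative energy).} Test the weak momentum equation \eqref{eq:weak-momentum} with $\bU_\eps$. This requires time regularization, which is legitimate since $\partial_t\uf\in W^{2,\infty}$. Also use the renormalized continuity equation \eqref{eq:renormalized-continuity} with $b(\rho)=\rho$ and test functions $\tfrac12|\bU_\eps|^2$, and the kinetic-energy inequality \eqref{eq:kinetic-energy}. Together these yield the inequality for $\frac12\int\rhoe|\boldsymbol e_\eps|^2+\int_0^t\int2\mu(\te)|\cD(\boldsymbol e_\eps)|^2$. Its right-hand side collects the following terms:
\begin{itemize}
\item the density defect $\int(\rhoe-\rhof)(\partial_t\uf+\uf\cdot\nabla\uf-\bfv)\cdot\boldsymbol e_\eps$, after inserting the effective equation, which is bounded by $\norm{q_\eps}{L^2}\norm{\boldsymbol e_\eps}{L^2}$;
\item the convective term $\int\rhoe\boldsymbol e_\eps\otimes\boldsymbol e_\eps:\nabla\bU_\eps$, which is bounded by $\norm{\nabla\uf}{\infty}\cE_\eps$ plus the corrector part, handled via $L^{6/5}\times L^6\times L^\infty$ pairings;
\item the viscous mismatch $2(\mu(\te)-\mu(\tf))\cD(\bU_\eps):\cD(\boldsymbol e_\eps)$;
\item the capacity residual, in which $\mu(\tf)\bM_0\uf$ appears;
\item the buoyancy difference $(\te-\tf)\bg\cdot\boldsymbol e_\eps$;
\item errors where $\uf$ and $\cR_\eps\uf$ are interchanged.
\end{itemize}
The pressure is harmless: we test against solenoidal fields, and the effective pressure paired with $\boldsymbol e_\eps$ enters through the residual estimate.

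\textbf{Step 2 (density).} Since $\widehat{\rhoe}$ and $\rhof$ are transported by $\widetilde{\ue}$ and $\uf$ respectively, the difference satisfies $\partial_tq_\eps+\widetilde{\ue}\cdot\nabla q_\eps=-(\widetilde{\ue}-\uf)\cdot\nabla\rhof$. The DiPerna--Lions renormalization with $b(q)=q^2$ is applied to the pair; this is justified because $\rhof$ is Lipschitz. It gives
\[
 \tfrac{d}{dt}\norm{q_\eps}{L^2}^2\le C\norm{q_\eps}{L^2}\bigl(\norm{\boldsymbol e_\eps}{L^2}+\eps\bigr).
\]

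\textbf{Step 3 (closing).} The viscous mismatch is the main obstacle. The term $\cD(\bU_\eps)$ is not bounded in $L^\infty$, since near the holes it is of size $\eps^{-3}$ locally, but it is bounded in $L^2$ uniformly. So I bound the mismatch by
\[
 \norm{\mu(\te)-\mu(\tf)}{L^\infty}\norm{\cD(\bU_\eps)}{L^2}\norm{\cD(\boldsymbol e_\eps)}{L^2}.
\]
This is exactly why the thermal $L^\infty$ stability is needed. Combining it with the Lipschitz property of $\mu$ and with \Cref{lem:thermal} gives the bound $C(\norm{\boldsymbol e_\eps}{L^2}+\eps)\norm{\cD(\boldsymbol e_\eps)}{L^2}$. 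Young's inequality absorbs the dissipation into $\mumin$ times the Korn-controlled $\norm{\nabla\boldsymbol e_\eps}{L^2}^2$. Korn's inequality holds on $H^1_0(\Oe)$ with a constant independent of $\eps$. The residual and corrector terms likewise contribute $C\eps\norm{\nabla\boldsymbol e_\eps}{L^2}\le\delta\norm{\nabla\boldsymbol e_\eps}{}^2+C\eps^2$. Here $\norm{\widetilde{\ue}-\uf}{L^2}\le\norm{\boldsymbol e_\eps}{L^2}+C\eps$, using $\rhoe\ge\rhomin$. Gronwall then gives \eqref{eq:main-estimate}, and the thermal $H^1$ term follows by integrating \Cref{lem:thermal}.

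\textbf{Step 4 (convergences).} The estimates \eqref{eq:main-convergences} follow from \eqref{eq:main-estimate} together with the following: the $L^2$ restriction error is $O(\eps)$; Korn plus Poincaré on $H^1_0(\Oe)$ converts $\cD(\boldsymbol e_\eps)$ into $H^1$; and the pointwise thermal $L^\infty$ bound is applied a.e. in $t$ with the $L^\infty_t L^2$ velocity bound. The $\mu$ estimate is then immediate from $\mu\in W^{1,\infty}$.
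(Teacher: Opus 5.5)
Your overall route is the paper's. It combines a relative energy with $\bU_\eps=\cR_\eps\uf$, DiPerna--Lions renormalization for $q_\eps$, the $L^\infty$ thermal bound to control $(\mu(\te)-\mu(\tf))\cD(\bU_\eps):\cD(\boldsymbol e_\eps)$ against the merely $L^2$-bounded $\cD(\bU_\eps)$, the weighted residual for the Brinkman cross term, and Gronwall. One step, however, fails as written: the corrector part of the quadratic convective term,
\[
 \int_{\Oe}\rhoe\bigl(\boldsymbol e_\eps\cdot\nabla(\bU_\eps-\uf)\bigr)\cdot\boldsymbol e_\eps\,\mathrm d\bx .
\]
An $L^{6/5}\times L^6\times L^\infty$ pairing cannot control it. The only $L^\infty$ factor is $\rhoe$. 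Once $\nabla(\bU_\eps-\uf)$ is placed in $L^{6/5}$ and one copy of $\boldsymbol e_\eps$ in $L^6$, the second copy would have to lie in $L^\infty$. Equivalently, $|\boldsymbol e_\eps|^2$ is only in $L^3$, not in $L^6=(L^{6/5})^*$. That pairing is the right tool for the \emph{linear} term $\int\rhoe(\uf\cdot\nabla(\bU_\eps-\uf))\cdot\boldsymbol e_\eps$, and that is where the paper uses the $L^{6/5}$ corrector bound.

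In the quadratic term, the corrector gradient carries the order-one boundary-layer energy in $L^2$ and supplies no small factor, so you need interpolation. The paper does not split $\nabla\bU_\eps$ at all. It uses $\|\nabla\bU_\eps\|_{L^2(\Oe)}\le C$ and Gagliardo--Nirenberg,
\[
 \int_{\Oe}|\boldsymbol e_\eps|^2|\nabla\bU_\eps|\,\mathrm d\bx\le C\|\boldsymbol e_\eps\|_{L^4}^2\le C\|\boldsymbol e_\eps\|_{L^2}^{1/2}\|\nabla\boldsymbol e_\eps\|_{L^2}^{3/2},
\]
followed by Korn and Young with exponents $4/3$ and $4$. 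This gives $\zeta\|\cD(\boldsymbol e_\eps)\|_{L^2}^2+C_\zeta\|\boldsymbol e_\eps\|_{L^2}^2$, which is Gronwall-compatible. Alternatively, interpolating the $L^{6/5}$ and $L^2$ corrector bounds gives $\|\nabla(\bU_\eps-\uf)\|_{L^{3/2}}\le C\eps^{1/2}$. The term is then at most $C\eps^{1/2}\|\nabla\boldsymbol e_\eps\|_{L^2}^2$ and can be absorbed for small $\eps$, at the price of a threshold $\eps_0$ that depends on the effective solution.

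There is also a smaller inaccuracy. The residual functional you quote is $O(\eps)$ only on solenoidal test fields, not on all of $H^1_0(\Oe)$. On general fields it contains $\int\mu(\tf)(\bQ_\eps\cdot\uf)\diver\bw$, and the pressure corrector is of order one in $L^2$. This is harmless, since you apply the estimate only to $\boldsymbol e_\eps$. With the quadratic term repaired as above, the rest of your argument goes through as in the paper.
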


\begin{remark}[Well-prepared data, sharpness and constants]\label{rem:main-constants}
The well-prepared class is nonempty. Set
\[
 \rhozeroe=\rho_0|_{\Oe},
 \qquad
 \uzeroe=\cR_\eps\boldsymbol u_0,
\]
and retain the convention that $\rhozeroe$ is extended by $\rhomin$ in $\He$. Then \eqref{eq:initial-data-assumptions} is satisfied for all sufficiently small $\eps$, while
\[
 \cE_\eps(0)
 =\frac12\int_{\He}|\rhomin-\rho_0|^2\,\mathrm d\bx
 \le C|\He|
 \le C\eps^6
\]
by \eqref{eq:volume-holes}. Thus the $O(\eps)$ convergence rate applies to an explicit family of admissible initial data. Moreover, \Cref{prop:restriction-sharp-scale} shows that the $L^6(\Omega)$ zero-extension error of the restriction step cannot, for nonzero comparison fields, be improved from $O(\eps)$ to $o(\eps)$. Whether the rate in \Cref{thm:main} is optimal remains open.

The threshold $\eps_0$ depends only on the geometric separation requirements in \eqref{eq:geometry} and is independent of the norms of the effective solution. The constant $C$ may depend on the fixed geometry, the coefficient and force bounds, $T$, $\rhomin$, $\rhomax$ and the norms of the effective solution in \Cref{ass:strong}, but not on $\eps$.
\end{remark}

The proof of \Cref{thm:main} is given in \Cref{sec:relative}. Before turning to the quantitative estimates, we record that the microscopic comparison class is nonempty.

\begin{proposition}[Global microscopic weak existence]
\label{prop:microscopic-existence}
Assume \eqref{eq:geometry} and \eqref{eq:coefficients}--\eqref{eq:initial-data-assumptions}. For fixed sufficiently small $\eps>0$ and finite $T>0$ for which the data satisfy \eqref{eq:data-assumptions}, there exists a finite-energy weak solution $(\rhoe,\ue,\te)$ of \eqref{eq:micro-intro} in the sense of \Cref{def:finite-energy-weak}, with the initial data specified in \eqref{eq:initial-data-assumptions}. Moreover,
\[
 \rhoe\in C([0,T];L^p(\Oe)),\qquad 1\le p<\infty,
 \qquad
 \rhomin\le\rhoe\le\rhomax.
\]
The velocity also has a weakly continuous $L^2$ representative with initial value $\uzeroe$.
\end{proposition}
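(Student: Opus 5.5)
The plan is a standard Galerkin--compactness argument on the fixed perforated domain $\Oe$, with the quasi-static heat equation solved at each time as a linear elliptic problem driven by the velocity. Fix $\eps$ small enough that the holes in \eqref{eq:geometry} are disjoint and compactly contained in $\Omega$; then $\Oe$ is a bounded Lipschitz (indeed $C^{2,\beta}$) domain, and everything below is $\eps$-independent in structure only.

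\emph{Step 1: the thermal map.} For $\bw\in L^2(\Omega;\R^3)$ with $\diver\bw=0$ and zero normal trace, the compatibility computation from the Introduction gives $\int_\Omega\bw\cdot\bg=0$. Lax--Milgram on the zero-mean subspace of $H^1(\Omega)$, using $\kmin>0$ and Poincar\'e--Wirtinger, yields a unique $\vartheta=\Theta[\bw]$ with
\[
 \norm{\Theta[\bw]}{H^1(\Omega)}\le C\norm{\bw}{L^2}.
\]
The map $\Theta$ is linear, so it is Lipschitz from $L^2$ to $H^1$.

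\emph{Step 2: regularized problem.} We use a Faedo--Galerkin scheme in the spirit of Lions' variable-density theory \cite{Lions}. Take an eigenbasis $\{\bw_j\}$ of the Stokes operator on $\Oe$ and the spaces $X_n=\mathrm{span}\{\bw_1,\dots,\bw_n\}\subset H^1_0\cap C^1(\overline{\Oe})$. Mollify the initial density so that $\rho_{0}^{\delta}$ is smooth, with values in $[\rhomin,\rhomax]$. Given $\bu\in C([0,\tau];X_n)$, the following quantities are well defined:
\begin{itemize}
 \item solve the transport equation $\partial_t\rho+\bu\cdot\nabla\rho=0$ by characteristics (no inflow, since $\bu=0$ on $\partial\Oe$); this gives $\rhomin\le\rho\le\rhomax$;
 \item set $\vartheta=\Theta[\widetilde{\bu}]$;
 \item solve the projected momentum ODE on $X_n$.
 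Here the mass matrix $\int\rho\,\bw_i\cdot\bw_j$ is uniformly invertible, and $\mu(\vartheta)$ lies in $[\mumin,\mumax]$.
\end{itemize}
A Schauder fixed point on a short interval gives a local solution $\bu_n$. Testing with $\bu_n$ and using the identity $\int\ke|\nabla\vartheta_n|^2=\int\vartheta_n\bg\cdot\bu_n$ yields exact equality in \eqref{eq:combined-energy}. With Korn's inequality on $H^1_0(\Oe)$ and Gronwall (the term $\rho\bfv\cdot\bu$ is harmless), this gives bounds uniform in $n$:
\[
 \bu_n\in L^\infty L^2\cap L^2H^1_0,
 \qquad
 \vartheta_n\in L^2H^1.
\]
These bounds make the solution global on $[0,T]$.

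\emph{Step 3: passage to the limit.} This is the main obstacle, because $\mu(\vartheta_n)$ multiplies $\cD(\bu_n)$, so strong convergence of the temperature is needed. First, Lions' argument gives strong convergence of $\rho_n$ in $C([0,T];L^p)$. The time-derivative bound on $P_n(\rho_n\bu_n)$ plus the Aubin--Lions type lemma from \cite{Lions} then gives $\sqrt{\rho_n}\bu_n\to\sqrt{\rho}\bu$ strongly in $L^2L^2$, hence $\bu_n\to\bu$ strongly in $L^2((0,T)\times\Oe)$ since $\rho_n\ge\rhomin$. By linearity and Step 1, $\vartheta_n=\Theta[\widetilde{\bu}_n]\to\Theta[\widetilde{\bu}]$ strongly in $L^2(0,T;H^1)$. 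Passing to a subsequence, $\mu(\vartheta_n)\to\mu(\vartheta)$ a.e. and boundedly, so
\[
 \mu(\vartheta_n)\cD(\bw)\to\mu(\vartheta)\cD(\bw)\quad\text{strongly in }L^2,
\]
which pairs with the weakly convergent $\cD(\bu_n)$. The convective term converges by the strong $L^2$ convergence of $\bu_n$ combined with the $L^2L^6$ bound. The remaining pieces of the limit are as follows:
\begin{itemize}
 \item the heat equation \eqref{eq:weak-heat} passes to the limit by linearity;
 \item the energy inequality \eqref{eq:combined-energy} follows from weak lower semicontinuity, because $\sqrt{\mu(\vartheta_n)}\cD(\bu_n)\rightharpoonup\sqrt{\mu(\vartheta)}\cD(\bu)$ and $\nabla\vartheta_n\to\nabla\vartheta$ strongly; a further limit $\delta\to0$ in the mollification repeats the same compactness;
 \item weak continuity of $\bu$ in $L^2$ with $\bu(0)=\uzeroe$ follows from the equation via the standard argument.
\end{itemize}

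\emph{Step 4: density properties.} Since $\diver\widetilde{\ue}=0$ in $\Omega$, DiPerna--Lions theory \cite{DiPernaLions} applies to $\widetilde{\ue}\in L^2H^1_0(\Omega)$. It gives that the transported extension $\widehat{\rhoe}$ is renormalized, satisfies \eqref{eq:renormalized-continuity}, and lies in $C([0,T];L^p(\Omega))$. Because the holes are not entered by the flow, the extension stays equal to $\rhomin$ on $\He$, and its restriction is $\rhoe$. The pointwise bounds $\rhomin\le\rhoe\le\rhomax$ are preserved.
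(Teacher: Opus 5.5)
Your proposal is correct and follows essentially the paper's route. Both arguments use a Galerkin scheme with transported densities, the Lax--Milgram thermal map and the exact Galerkin energy identity, then strong density compactness from DiPerna--Lions renormalization plus $L^2$-norm conservation, then convergence of the integrated kinetic energies to upgrade weak to strong velocity convergence, after which $\mu(\vartheta_n)\to\mu(\vartheta)$ and the limit passage are routine. The differences are only technical: you use a Stokes eigenbasis, Schauder, a separate mollification limit and Lions' compactness lemma for $P_n(\rho_n\bu_n)$, where the paper uses a Gram--Schmidt basis of smooth solenoidal fields, Picard iteration, $n$-dependent initial densities and an explicit momentum-coordinate/projection-tail argument, and the paper also spells out the Hardy cutoff across $\partial\He$ and the Lax--Milgram reconstruction of the weakly continuous velocity that you call standard.
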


The uniform a priori bounds used in the quantitative argument are collected in \Cref{lem:uniform} with constants independent of $\eps$, and the compactness construction for \Cref{prop:microscopic-existence} is carried out at fixed $\eps$ in Appendix~\ref{sec:micro-existence}.

\section{Solenoidal restriction at the critical Stokes-capacity scale}\label{sec:corrector}
Throughout this section, we assume the geometry \eqref{eq:geometry}. 
For any Lipschitz domain $U\subset\R^3$, write
\[
 H_{0,\sigma}^1(U)
 :=\{\bz\in H_0^1(U;\R^3):\diver\bz=0\}.
\]
Set
\[
 \cX(\Omega)
 :=W^{1,\infty}(\Omega;\R^{3})\cap H_{0,\sigma}^{1}(\Omega).
\]
We first record the scaled local Stokes estimate used in the truncated-cell argument. Let $(\mathcal P',\mathcal P)$ be a fixed nested pair of local patches. In the interior case, $\overline{\mathcal P'}\subset\mathcal P$ and in the boundary case, the two patches share a $C^2$ physical boundary portion, while $\overline{\mathcal P'}$ stays a positive distance from the artificial part of $\partial\mathcal P$. If $(\boldsymbol h,\pi)$ solves
\[
 -\Delta\boldsymbol h+\nabla\pi=\boldsymbol0,
 \qquad \diver\boldsymbol h=0
\]
in $\mathcal P$, with $\boldsymbol h=\boldsymbol0$ on the physical boundary portion in the boundary-patch case, the interior and homogeneous-Dirichlet Stokes estimates of \cite[Theorems~IV.4.1 and~IV.5.1]{Galdi}, taken with exponent $6$, followed by Sobolev embedding give
\[
 \norm{\boldsymbol h}{L^\infty(\mathcal P')}
 +\norm{\nabla\boldsymbol h}{L^\infty(\mathcal P')}
 +\inf_{c\in\R}\norm{\pi-c}{L^\infty(\mathcal P')}
 \le C\norm{\boldsymbol h}{L^6(\mathcal P)}.
\]
The constant depends only on the fixed reference patches and their $C^2$ geometry. Under the dilation $\by=r\boldsymbol\eta$, with the pressure rescaled as $\Pi(\boldsymbol\eta)=r\pi(r\boldsymbol\eta)$, this becomes
\begin{equation}\label{eq:scaled-stokes-local}
 \norm{\boldsymbol h}{L^\infty(r\mathcal P')}
 +r\norm{\nabla\boldsymbol h}{L^\infty(r\mathcal P')}
 +r\inf_{c\in\R}\norm{\pi-c}{L^\infty(r\mathcal P')}
 \le Cr^{-1/2}\norm{\boldsymbol h}{L^6(r\mathcal P)}.
\end{equation}
In particular, the constant in \eqref{eq:scaled-stokes-local} is independent of the dilation parameter $r$. The infimum over $c$ reflects the local pressure indeterminacy. The global zero-mean normalization of $Q_R^i$ will fix this additive constant after the dyadic matching below. This explicit scaling will be used both on dyadic interior annuli and on the rescaled outer-boundary patches.

With this notation, the restriction operator will combine truncated exterior Stokes correctors, the associated cell-capacity distribution, and a divergence repair for slowly varying macroscopic fields. The scale $a_\eps=\eps^3$ is precisely the critical Stokes-capacity regime analyzed in \cite{JLP}.

\begin{lemma}[Exterior decay and truncated resistance]\label{lem:truncated-stokes}
Let $(\boldsymbol V^i,Q^i)$, $i=1,2,3$, be the exterior Stokes fields defining $\bM_0$ in \eqref{eq:resistance-matrix}, normalized by $Q^i(\by)\to0$ as $|\by|\to\infty$ and set $\boldsymbol Z^i=\boldsymbol V^i-\be_i$. Then
\[
 |\boldsymbol Z^i(\by)|\le C|\by|^{-1},\qquad
 |\nabla\boldsymbol Z^i(\by)|+|Q^i(\by)|\le C|\by|^{-2},
 \qquad |\by|\ge2.
\]
For $R\ge4$, let $(\boldsymbol V_R^i,Q_R^i)$ be the solution of
\[
\left\{
\begin{aligned}
 -\Delta\boldsymbol V_R^i+\nabla Q_R^i&=\boldsymbol0,
 &\qquad \diver\boldsymbol V_R^i&=0,
 &&\text{in }B_R\setminus\overline{T_0},\\
 \boldsymbol V_R^i&=\boldsymbol0,
 &&&&\text{on }\partial T_0,\\
 \boldsymbol V_R^i&=\be_i,
 &&&&\text{on }\partial B_R,
\end{aligned}
\right.
\]
with $\int_{B_R\setminus\overline{T_0}}Q_R^i\,\mathrm d\by=0$. Define
\[
 (\bM_R)_{ij}=
 \int_{B_R\setminus\overline{T_0}}
 \nabla\boldsymbol V_R^i:\nabla\boldsymbol V_R^j\,\mathrm d\by,
 \qquad
 \boldsymbol t_R^i=(\nabla\boldsymbol V_R^i-Q_R^i\bI)\bn
 \quad\text{on }\partial B_R.
\]
Here $\bn$ denotes the outer normal of $B_R$. Then there exists $C>0$, independent of $R\ge4$, such that the following hold:
\begin{equation}\label{eq:truncated-stokes-estimates}
 \lVert\bM_R-\bM_0\rVert\le CR^{-1},\qquad
 \int_{\partial B_R}\boldsymbol t_R^i\,\mathrm dS=\bM_R\be_i,
 \qquad
 \norm{\boldsymbol t_R^i}{L^2(\partial B_R)}\le CR^{-1}.
\end{equation}
Moreover,
\begin{equation}\label{eq:truncated-pointwise}
 |\boldsymbol V_R^i(\by)-\be_i|\le C|\by|^{-1},\qquad
 |\nabla\boldsymbol V_R^i(\by)|+|Q_R^i(\by)|\le C|\by|^{-2},
\end{equation}
for $2\le|\by|\le R$.  In addition,
\begin{equation}\label{eq:truncated-near-field}
 \norm{\boldsymbol V_R^i-\be_i}{L^6(B_2\setminus T_0)}
 +\norm{\nabla\boldsymbol V_R^i}{L^2(B_2\setminus T_0)}
 +\norm{Q_R^i}{L^{6/5}(B_2\setminus T_0)}\le C.
\end{equation}
\end{lemma}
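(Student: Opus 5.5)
Our plan is to treat the exterior fields first, then compare the truncated fields with them through a correction problem on the large ball $B_R\setminus\overline{T_0}$.

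\emph{Exterior decay.} Outside $B_2\supset\overline{T_0}$, the pair $(\boldsymbol Z^i,Q^i)$ solves the homogeneous Stokes system and tends to $(\boldsymbol0,0)$ at infinity. We will use the standard representation through the Stokeslet. For $|\by|\ge2$,
\[
\boldsymbol Z^i(\by)=-\mathbf G(\by)\boldsymbol F^i+O(|\by|^{-2}),
\qquad
\boldsymbol F^i=\int_{\partial T_0}(\nabla\boldsymbol V^i-Q^i\bI)\bn\,\mathrm dS .
\]
Here $\mathbf G$ is the fundamental tensor, which is homogeneous of degree $-1$, and the associated pressure kernel is homogeneous of degree $-2$. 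This follows from Green's formula on $B_\rho\setminus B_2$ as $\rho\to\infty$, or directly from \cite[Ch.~V]{Galdi}. It gives $|\boldsymbol Z^i|\le C|\by|^{-1}$ and $|\nabla\boldsymbol Z^i|+|Q^i|\le C|\by|^{-2}$. The bounds for the gradient and the pressure can also be recovered from the velocity bound. Apply \eqref{eq:scaled-stokes-local} on the dyadic annuli $\{r\le|\by|\le2r\}$ with $\mathcal P$ a slightly larger annulus. This yields $r|\nabla\boldsymbol Z^i|\le Cr^{-1/2}\|\boldsymbol Z^i\|_{L^6}\le Cr^{-1/2}\,r^{-1}r^{1/2}$. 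The pressure constant is fixed by $Q^i\to0$, which can be seen by summing the oscillation bounds along the dyadic scales.

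\emph{Truncated problem.} Let $\chi$ be a cutoff equal to $1$ on $B_{R/2}$ and supported in $B_R$, with $|\nabla\chi|\le CR^{-1}$. Write
\[
\boldsymbol V_R^i=\boldsymbol V^i-\boldsymbol w_R,
\qquad
\boldsymbol w_R=(1-\chi)\boldsymbol Z^i+\mathcal B\bigl(\nabla\chi\cdot\boldsymbol Z^i\bigr)+\boldsymbol r_R .
\]
Here $\mathcal B$ is the Bogovskii operator on the annulus $B_R\setminus B_{R/2}$. Its constant is scale invariant in $\|\nabla\cdot\|_{L^2}$, and the compatibility condition holds because $\int\nabla\chi\cdot\boldsymbol Z^i=0$, since $\boldsymbol Z^i$ is divergence free with zero flux through spheres. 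The remainder $\boldsymbol r_R\in H^1_{0,\sigma}$ solves a Stokes problem whose forcing has dual norm bounded by $\|\nabla((1-\chi)\boldsymbol Z^i)\|_{L^2(B_R\setminus B_{R/2})}\le C(R^{-4}R^3)^{1/2}=CR^{-1/2}$. The energy estimate then gives $\|\nabla(\boldsymbol V_R^i-\boldsymbol V^i)\|_{L^2(B_R\setminus T_0)}\le CR^{-1/2}$. Together with $\|\nabla\boldsymbol V^i\|_{L^2(|\by|>R)}\le CR^{-1/2}$, the polarization identity yields only $\|\bM_R-\bM_0\|\le CR^{-1/2}$.

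To gain the full rate $R^{-1}$, we will use the variational structure instead of polarization. By minimality, $\bM_R\ge\bM_0$. For the reverse inequality, take $\boldsymbol V^i$ modified near $\partial B_R$ as the competitor. Its energy is $\boldsymbol\xi\cdot\bM_0\boldsymbol\xi$ minus the tail $\int_{|\by|>R}|\nabla\boldsymbol V_\xi|^2\sim R^{-1}$, plus the boundary-layer cost $CR^{-1}$. Equivalently, integrating by parts,
\[
\bM_R\be_i\cdot\be_j=\int_{\partial B_R}\boldsymbol t_R^i\cdot\be_j\,\mathrm dS,
\]
and the same identity holds for $\bM_0$ as a flux through any sphere. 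The difference can therefore be bounded by the velocity mismatch on $\partial B_R$, of size $R^{-1}$, times the stress, of size $R^{-2}$, over an area of size $R^2$, which gives $CR^{-1}$. The identity $\int_{\partial B_R}\boldsymbol t_R^i=\bM_R\be_i$ itself comes from testing the equation with the constant $\be_j$ and using $\boldsymbol V_R^j=\be_j$ on $\partial B_R$ and $\boldsymbol V_R^j=\boldsymbol0$ on $\partial T_0$:
\[
\int\nabla\boldsymbol V_R^i:\nabla\boldsymbol V_R^j=\int_{\partial B_R}\boldsymbol t_R^i\cdot\be_j .
\]

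\emph{Pointwise bounds for $\boldsymbol V_R^i$ (main obstacle).} The energy bound alone does not give pointwise decay up to $|\by|=R$. We plan the following steps.
\begin{enumerate}
\item Apply a maximum-modulus or $L^\infty$ estimate for the Stokes system with data $\boldsymbol0$ and $\be_i$ on the two boundary pieces (\cite{Galdi}, scaled), which gives $|\boldsymbol V_R^i-\be_i|\le C$.
\item Prove $\|\boldsymbol V_R^i-\boldsymbol V^i\|_{L^6(B_R)}\le C\|\nabla(\boldsymbol V_R^i-\boldsymbol V^i)\|_{L^2}$ with a small right-hand side. Then use a comparison on dyadic annuli: the difference $\boldsymbol V_R^i-\boldsymbol V^i$ is a Stokes field which is $O(R^{-1})$ on $\partial B_R$ and vanishes on $\partial T_0$, so we expect it to be $O(R^{-1})\le O(|\by|^{-1})$ throughout.
\item Justify that expectation by a weighted or duality argument, writing the difference as $R^{-1}$ times a rescaled bounded Stokes solution on $B_1\setminus R^{-1}T_0$.
\item Use \eqref{eq:scaled-stokes-local} on annuli, including the boundary-patch version near $\partial B_R$, to obtain the bounds on the gradient and on $Q_R^i$, with $c$ fixed by the zero-mean normalization.
\end{enumerate}

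\emph{Remaining estimates.} The pointwise bounds on $\partial B_R$ give $\|\boldsymbol t_R^i\|_{L^2(\partial B_R)}\le C(R^{-4}R^2)^{1/2}=CR^{-1}$. The near-field bound \eqref{eq:truncated-near-field} follows from the energy bound, from Sobolev embedding applied to $\boldsymbol V_R^i-\be_i$ (which is bounded in $L^6$ by step 1 of the previous list), and from the pressure estimate via Bogovskii duality together with the bound on $Q_R^i$ at $|\by|=2$ to fix the constant.
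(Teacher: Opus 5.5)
Your treatment of the exterior decay (Stokeslet asymptotics) and of $\bM_R-\bM_0$ matches the paper. You use zero extension for $\bM_0\le\bM_R$, and the cut-off-plus-Bogovski\u{\i} competitor $\be_i+\chi\boldsymbol Z^i-\mathcal B(\nabla\chi\cdot\boldsymbol Z^i)$ for $\bM_R\le\bM_0+CR^{-1}$. Drop the alternative flux heuristic: it presupposes the stress bound $|\boldsymbol t_R^i|\le CR^{-2}$, which only comes out of the pointwise estimates.

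The genuine gap is the step you flag as the main obstacle. You aim to show $\boldsymbol V_R^i-\boldsymbol V^i=O(R^{-1})$ throughout, via a ``scaled'' maximum-modulus estimate plus an unspecified weighted or duality argument. That is not yet an argument. Maximum-modulus constants for Stokes depend on the domain, and scaling does not make them uniform here, because the rescaled domains $B_1\setminus R^{-1}\overline{T_0}$ degenerate. Uniformity in $R$ would be a separate perforated-domain estimate, which you do not supply.

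Your step 2 also relies on $\|\boldsymbol V_R^i-\boldsymbol V^i\|_{L^6(B_R)}\le C\|\nabla(\boldsymbol V_R^i-\boldsymbol V^i)\|_{L^2}$. This is not uniform in $R$ for fields vanishing only on $\partial T_0$. For example, $1-\phi$, with $\phi$ the capacitary potential of $T_0$, has bounded Dirichlet energy but $L^6(B_R)$ norm of order $R^{1/2}$. The paper avoids this by extending the difference by $-\boldsymbol Z^i$ outside $B_R$. The resulting $\boldsymbol H_R^i$ lies in $L^6(\R^3)$, has $\nabla\boldsymbol H_R^i\in L^2$ and vanishes on $T_0$. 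Orthogonality then gives $\|\nabla\boldsymbol H_R^i\|_{L^2}^2=(\bM_R-\bM_0)_{ii}\le CR^{-1}$, hence $\|\boldsymbol H_R^i\|_{L^6(\R^3)}\le CR^{-1/2}$.

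The missing observation is that you do not need $O(R^{-1})$. Insert $\|\boldsymbol H_R^i\|_{L^6}\le CR^{-1/2}$ into \eqref{eq:scaled-stokes-local} on annuli of radius $r\in[2,R/8]$. This gives $|\boldsymbol H_R^i|\le CR^{-1/2}r^{-1/2}$ and $|\nabla\boldsymbol H_R^i|\le CR^{-1/2}r^{-3/2}$, which are at most $Cr^{-1}$ and $Cr^{-2}$ precisely because $r\le R$. Adding the exterior decay of $\boldsymbol Z^i$ yields the velocity and gradient parts of \eqref{eq:truncated-pointwise}.

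On the outer shell, apply the boundary-patch version of \eqref{eq:scaled-stokes-local} directly to $\boldsymbol Z_R^i=\boldsymbol V_R^i-\be_i$. This field vanishes on $\partial B_R$ and has $L^6(B_R\setminus B_{R/8})$ norm at most $CR^{-1/2}$, so you get $|\boldsymbol Z_R^i|\le CR^{-1}$ and $|\nabla\boldsymbol Z_R^i|\le CR^{-2}$ up to $\partial B_R$.

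For the pressure, the local estimates control only oscillations. You must chain the additive constants through overlapping dyadic annuli into the outer shell, where the oscillations are summable. Only then does $\int Q_R^i=0$ show that the single remaining constant is $O(R^{-2})$. Writing ``$c$ fixed by the zero-mean normalization'' hides this chaining, and the chaining is what produces $|Q_R^i|\le CR^{-2}$ on $\partial B_R$ and hence the traction bound in \eqref{eq:truncated-stokes-estimates}.

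Finally, \eqref{eq:truncated-near-field} follows from the energy bound and Poincar\'e--Sobolev, with the pressure constant matched on $2<|\by|<3$. Your step 1 is therefore unnecessary.
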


\begin{proof}
Finite-energy exterior Stokes theory and the large-distance estimates of \cite[Theorem~V.3.2]{Galdi} give the stated decay. The constants in the following estimates are uniform for $R\ge4$. For $\boldsymbol\xi\in\R^3$, define
\begin{align*}
 \boldsymbol V_{\boldsymbol\xi}
 &=\sum_{i=1}^3\xi_i\boldsymbol V^i,
 &\boldsymbol Z_{\boldsymbol\xi}
 &=\boldsymbol V_{\boldsymbol\xi}-\boldsymbol\xi
   =\sum_{i=1}^3\xi_i(\boldsymbol V^i-\be_i),\\
 \boldsymbol Z_{R,\boldsymbol\xi}
 &=\sum_{i=1}^3\xi_i(\boldsymbol V_R^i-\be_i),
 &
 D_R&=B_R\setminus\overline{T_0}.
\end{align*}
Define the affine solenoidal Dirichlet classes
\begin{align*}
 \mathcal A_R(\boldsymbol\xi)
 :={}&\bigl\{\bz\in H^1(D_R;\R^3):
 \diver\bz=0,\
 \operatorname{Tr}\bz=-\boldsymbol\xi\ \text{on }\partial T_0,\
 \operatorname{Tr}\bz=\boldsymbol0\ \text{on }\partial B_R\bigr\},\\
 \mathcal A_\infty(\boldsymbol\xi)
 :={}&\bigl\{\bz\in L^6(\R^3\setminus\overline{T_0};\R^3):
 \nabla\bz\in L^2,
 \ \diver\bz=0,
 \ \operatorname{Tr}\bz=-\boldsymbol\xi\ \text{on }\partial T_0\bigr\}.
\end{align*}
The condition $\bz\in L^6(\R^3\setminus\overline{T_0})$ excludes nonzero constant states at infinity and therefore fixes the finite-energy representative. The fields $\boldsymbol Z_{R,\boldsymbol\xi}$ and $\boldsymbol Z_{\boldsymbol\xi}$ are the unique energy minimizers in these two affine classes. In particular,
\begin{equation*}
 \boldsymbol\xi\cdot\bM_R\boldsymbol\eta
 =\int_{D_R}\nabla\boldsymbol Z_{R,\boldsymbol\xi}:
 \nabla\boldsymbol Z_{R,\boldsymbol\eta}\,\mathrm d\by,
 \qquad
 \boldsymbol\xi\cdot\bM_0\boldsymbol\eta
 =\int_{\R^3\setminus\overline{T_0}}
 \nabla\boldsymbol Z_{\boldsymbol\xi}:
 \nabla\boldsymbol Z_{\boldsymbol\eta}\,\mathrm d\by.
\end{equation*}
Accordingly, $\bM_R$ and $\bM_0$ are the Gram matrices of the corresponding Dirichlet energies.

Choose a radial $\chi_R\in C_c^\infty(B_R)$ such that $\chi_R=1$ on $B_{R/2}$ and $|\nabla\chi_R|\le C/R$, and set $A_R=B_R\setminus\overline{B_{R/2}}$ and $f_R=\nabla\chi_R\cdot\boldsymbol Z_{\boldsymbol\xi}$. Let $\cB_{A_R}$ denote the standard Bogovski\u{\i} operator (see, e.g.,~\cite[Chapter~III]{Galdi}), i.e., the bounded right inverse of the divergence on $A_R$: for $f\in L_0^2(A_R):=\{g\in L^2(A_R):\int_{A_R}g\,\mathrm d\by=0\}$, the field $\cB_{A_R}f\in H_0^1(A_R;\R^3)$ satisfies
\[
 \diver(\cB_{A_R}f)=f,
 \qquad
 \|\nabla\cB_{A_R}f\|_{L^2(A_R)}\le C\|f\|_{L^2(A_R)},
\]
where the constant is uniform under dilation of the fixed annulus. Thus the only compatibility condition needed here is the zero-mean condition $\int_{A_R}f_R\,\mathrm d\by=0$. Since $\diver\boldsymbol Z_{\boldsymbol\xi}=0$,
\begin{align*}
 \int_{A_R}f_R\,\mathrm d\by
 &=\int_{A_R}\diver(\chi_R\boldsymbol Z_{\boldsymbol\xi})\,\mathrm d\by
 =\int_{\partial A_R}\chi_R\boldsymbol Z_{\boldsymbol\xi}\cdot\bn\,\mathrm dS
 =-\int_{\partial B_{R/2}}\boldsymbol Z_{\boldsymbol\xi}\cdot\bn\,\mathrm dS=0.
\end{align*}
For any sphere $\partial B_s$ enclosing $T_0$, the divergence theorem and the zero trace of $\boldsymbol V_{\boldsymbol\xi}$ on $\partial T_0$ give
\[
 \int_{\partial B_s}\boldsymbol V_{\boldsymbol\xi}\cdot\bn\,\mathrm dS=0.
\]
Moreover, $\int_{\partial B_s}\boldsymbol\xi\cdot\bn\,\mathrm dS=0$, so the flux of $\boldsymbol Z_{\boldsymbol\xi}$ also vanishes. We may therefore set $\boldsymbol b_R:=\cB_{A_R}f_R$. By the scale-invariant Bogovski\u{\i} estimate,
\[
 \diver\boldsymbol b_R=f_R,
 \qquad
 \norm{\nabla\boldsymbol b_R}{L^2(A_R)}
 \le C\norm{f_R}{L^2(A_R)}
 \le CR^{-1/2}|\boldsymbol\xi|.
\]
Consequently,
\[
 \boldsymbol Y_R:=\chi_R\boldsymbol Z_{\boldsymbol\xi}-\boldsymbol b_R
 \in\mathcal A_R(\boldsymbol\xi).
\]
The exterior decay then implies
\[
 \int_{A_R}
 \left(|\nabla\boldsymbol Z_{\boldsymbol\xi}|^2
 +R^{-2}|\boldsymbol Z_{\boldsymbol\xi}|^2\right)\,\mathrm d\by
 \le CR^{-1}|\boldsymbol\xi|^2,
\]
and the minimizing property yields
\begin{equation}\label{eq:MR-upper}
 \boldsymbol\xi\cdot\bM_R\boldsymbol\xi
 \le \boldsymbol\xi\cdot\bM_0\boldsymbol\xi
 +CR^{-1}|\boldsymbol\xi|^2.
\end{equation}
Conversely, define
\[
 \widetilde{\boldsymbol Z}_{R,\boldsymbol\xi}(\by)
 :=\begin{cases}
 \boldsymbol Z_{R,\boldsymbol\xi}(\by),&\by\in D_R,\\
 \boldsymbol0,&\by\in\R^3\setminus B_R.
 \end{cases}
\]
Combined with $\operatorname{Tr}_{\partial B_R}\boldsymbol Z_{R,\boldsymbol\xi}=\boldsymbol0$, this extension satisfies
\[
 \widetilde{\boldsymbol Z}_{R,\boldsymbol\xi}\in\mathcal A_\infty(\boldsymbol\xi).
\]
The minimizing property of $\boldsymbol Z_{\boldsymbol\xi}$ therefore gives
\begin{equation}\label{eq:MR-lower}
 \boldsymbol\xi\cdot\bM_0\boldsymbol\xi
 \le\boldsymbol\xi\cdot\bM_R\boldsymbol\xi.
\end{equation}
Since $\bM_R-\bM_0$ is symmetric, \eqref{eq:MR-upper}--\eqref{eq:MR-lower} imply $\|\bM_R-\bM_0\|\le CR^{-1}$. To compare the truncated and exterior correctors, set $D_R=B_R\setminus\overline{T_0}$ and $\boldsymbol Z_R^i:=\boldsymbol V_R^i-\be_i$ in $D_R$, and define
\[
 \boldsymbol H_R^i(\by)=
 \begin{cases}
  \boldsymbol Z_R^i(\by)-\boldsymbol Z^i(\by),&\by\in D_R,\\
  -\boldsymbol Z^i(\by),&\by\in\R^3\setminus\overline{B_R},
 \end{cases}
 \qquad
 \Pi_R^i:=Q_R^i-Q^i\quad\text{in }D_R.
\]
The traces of the two pieces coincide on $\partial B_R$, while $\operatorname{Tr}_{\partial T_0}\boldsymbol H_R^i=\boldsymbol0$. Extend $\boldsymbol H_R^i$ by zero to $T_0$ and retain the same notation. Then
\[
\begin{aligned}
 &\boldsymbol H_R^i\in L^6(\R^3;\R^3),
 \qquad
 \nabla\boldsymbol H_R^i\in L^2(\R^3;\R^{3\times3}),\\
 &\diver\boldsymbol H_R^i=0\quad\text{in }\mathcal D'(\R^3),
 \qquad
 \boldsymbol H_R^i=\boldsymbol0\quad\text{a.e.\ in }T_0.
\end{aligned}
\]
Thus $\boldsymbol H_R^i|_{\R^3\setminus\overline{T_0}}$ is an admissible finite-energy test field in the weak Euler--Lagrange equation for $\boldsymbol Z^i$. Consequently,
\[
 \int_{\R^3\setminus T_0}
 \nabla\boldsymbol Z^i:\nabla\boldsymbol H_R^i\,\mathrm d\by=0,
\]
and therefore
\begin{equation}\label{eq:HR-energy}
 \norm{\nabla\boldsymbol H_R^i}{L^2(\R^3\setminus T_0)}^2
 = (\bM_R-\bM_0)_{ii}
 \le CR^{-1},
 \qquad
 \norm{\boldsymbol H_R^i}{L^6(\R^3\setminus T_0)}
 \le CR^{-1/2}.
\end{equation}

Pointwise control is obtained from the uniform scaled estimate \eqref{eq:scaled-stokes-local}. For the interior region, let $\mathcal Q=B_4\setminus\overline{B_{1/2}}$ and $\mathcal Q'=B_{7/2}\setminus\overline{B_{3/4}}$. Firstly, we suppose that $R\ge16$. Then the interval $2\le r\le R/8$ is nonempty, and $T_0\subset\subset B_{1/8}$ ensures that each dilated pair $\mathcal Q_r=r\mathcal Q$, $\mathcal Q_r'=r\mathcal Q'$ lies entirely in the fluid region. Applying \eqref{eq:scaled-stokes-local} to $(\boldsymbol H_R^i,\Pi_R^i)$ and using \eqref{eq:HR-energy} gives
\begin{equation}\label{eq:HR-local}
 \norm{\boldsymbol H_R^i}{L^\infty(\mathcal Q_r')}
 \le CR^{-1/2}r^{-1/2},
 \qquad
 \norm{\nabla\boldsymbol H_R^i}{L^\infty(\mathcal Q_r')}
 +\inf_c\norm{\Pi_R^i-c}{L^\infty(\mathcal Q_r')}
 \le CR^{-1/2}r^{-3/2}.
\end{equation}
In particular, the same estimates hold on the overlapping annuli $E_r=\{r<|\by|<3r\}\subset\subset\mathcal Q_r'$.

It remains to obtain estimates that are uniform up to the outer boundary. In $B_R\setminus\overline{B_{R/8}}$, the field $\boldsymbol Z_R^i$ solves the homogeneous Stokes system and has homogeneous Dirichlet data on $\partial B_R$, so boundary Stokes regularity can be applied uniformly after rescaling. With $\by=R\boldsymbol\eta$, choose a finite connected cover $\{\mathcal P_\ell\}_{\ell=1}^{N_0}$ of the fixed shell $\{1/4<|\boldsymbol\eta|<1\}$ by interior balls and $C^2$ boundary charts at $\partial B_1$. For each $\mathcal P_\ell$, choose a slightly larger fixed patch $\mathcal P_\ell^+$ so that the pair $(\mathcal P_\ell,\mathcal P_\ell^+)$ satisfies the hypotheses of \eqref{eq:scaled-stokes-local}. On boundary patches, $\boldsymbol Z_R^i=\boldsymbol0$ on the physical boundary portion. The number $N_0$, the chart norms and the overlap geometry are independent of $R$, and the cover stays a fixed positive distance from the artificial sphere $|\boldsymbol\eta|=1/8$. Since
\[
 \norm{\boldsymbol Z_R^i}{L^6(B_R\setminus B_{R/8})}
 \le \norm{\boldsymbol H_R^i}{L^6}
 +\norm{\boldsymbol Z^i}{L^6(B_R\setminus B_{R/8})}
 \le CR^{-1/2},
\]
for each rescaled pair $(R\mathcal P_\ell,R\mathcal P_\ell^+)$, estimate \eqref{eq:scaled-stokes-local} yields a constant $c_{\ell,R}\in\R$ satisfying
\[
 \norm{\boldsymbol Z_R^i}{L^\infty(R\mathcal P_\ell)}
 +R\norm{\nabla\boldsymbol Z_R^i}{L^\infty(R\mathcal P_\ell)}
 +R\norm{Q_R^i-c_{\ell,R}}{L^\infty(R\mathcal P_\ell)}
 \le CR^{-1}.
\]
If two neighboring patches overlap, the two pressure bounds on the overlap imply $|c_{\ell,R}-c_{\ell',R}|\le CR^{-2}$.  Since the adjacency graph of the finite cover is connected, matching the constants along a chain of at most $N_0$ overlaps produces a constant $c_R^{\mathrm{out}}$ for the whole outer shell.  Hence
\begin{equation}\label{eq:outer-stokes-local}
 \norm{\boldsymbol Z_R^i}{L^\infty(B_R\setminus B_{R/4})}
 +R\norm{\nabla\boldsymbol Z_R^i}{L^\infty(B_R\setminus B_{R/4})}
 +R\norm{Q_R^i-c_R^{\mathrm{out}}}{L^\infty(B_R\setminus B_{R/4})}
 \le CR^{-1}.
\end{equation}
In this way, the velocity, gradient and pressure bounds remain uniform up to $\partial B_R$, with one common pressure constant on the connected outer shell. Since the exterior pressure satisfies $|Q^i(\by)|\le C|\by|^{-2}$, subtracting $Q^i$ changes this bound only by the same admissible order on the outer annuli. The subsequent dyadic matching is performed for $\Pi_R^i=Q_R^i-Q^i$, so the pressure constants can be matched globally across the connected outer shell.

We next fix the pressure constants by a dyadic comparison. For $R\ge64$, let $r_j=2^{j+1}$, $j=0,\ldots,J$, choose $J$ so that $R/16\le r_J<R/8$, and set
\[
 E_j=\{\by:r_j<|\by|<3r_j\},
 \qquad
 c_j=\frac{1}{|E_j|}\int_{E_j}\Pi_R^i\,\mathrm d\by.
\]
To connect the dyadic chain to the outer shell without assuming an overlap that may fail for the last dyadic radius, introduce the fixed bridge annulus
\[
 E_*=\{\by:R/8<|\by|<3R/8\},
 \qquad
 c_*=\frac{1}{|E_*|}\int_{E_*}\Pi_R^i\,\mathrm d\by,
\]
and the outer annulus
\[
 E_{\mathrm{out}}=\{\by:R/4<|\by|<3R/4\},
 \qquad
 c_R=\frac{1}{|E_{\mathrm{out}}|}\int_{E_{\mathrm{out}}}\Pi_R^i\,\mathrm d\by.
\]
Successive annuli $E_j$ and $E_{j+1}$ overlap on a set of measure comparable to $r_j^3$. Moreover, since $R/16\le r_J<R/8$, the overlap $E_J\cap E_*$ has measure comparable to $R^3$, while $E_*\cap E_{\mathrm{out}}$ also has measure comparable to $R^3$. Applying \eqref{eq:HR-local} with $r=R/8$ first gives the same bound relative to a suitable additive constant on $E_*$. Replacing that constant by the annular average $c_*$ changes it by at most the oscillation itself and hence
\[
 \norm{\Pi_R^i-c_*}{L^\infty(E_*)}\le CR^{-2}.
\]
Similarly, \eqref{eq:outer-stokes-local} gives $\|Q_R^i-c_R^{\mathrm{out}}\|_{L^\infty(E_{\mathrm{out}})} \le CR^{-2}$, while the exterior decay gives $\|Q^i\|_{L^\infty(E_{\mathrm{out}})}\le CR^{-2}$. Thus we have
\[
 \norm{\Pi_R^i-c_R^{\mathrm{out}}}{L^\infty(E_{\mathrm{out}})}
 \le CR^{-2}.
\]
Since $c_R$ is the average of $\Pi_R^i$ over $E_{\mathrm{out}}$, the same estimate implies $|c_R-c_R^{\mathrm{out}}|\le CR^{-2}$ and hence
\[
 \norm{\Pi_R^i-c_R}{L^\infty(E_{\mathrm{out}})}\le CR^{-2}.
\]
For the dyadic annuli, we have
\[
 \norm{\Pi_R^i-c_j}{L^\infty(E_j)}
 \le CR^{-1/2}r_j^{-3/2}.
\]
Taking averages over successive overlaps yields
\begin{equation}\label{eq:dyadic-pressure-matching}
 |c_{j+1}-c_j|
 \le CR^{-1/2}r_j^{-3/2},
 \qquad
 |c_*-c_J|+|c_R-c_*|\le CR^{-2}.
\end{equation}
Summing \eqref{eq:dyadic-pressure-matching} along the dyadic chain and then through the bridge annulus gives
\[
 |\Pi_R^i(\by)-c_R|
 \le CR^{-1/2}|\by|^{-3/2}
 \qquad (2\le|\by|\le 3R/8).
\]
On the outer region $R/4\le|\by|\le R$, the global outer-shell constant in \eqref{eq:outer-stokes-local}, the exterior pressure decay and $|c_R-c_R^{\mathrm{out}}|\le CR^{-2}$ give $|\Pi_R^i-c_R|\le CR^{-2}$. Since the two regions overlap, this gives
\begin{equation}\label{eq:pressure-oscillation}
 |\Pi_R^i(\by)-c_R|
 \le CR^{-1/2}|\by|^{-3/2}
 \qquad (2\le|\by|\le R).
\end{equation}

The zero-mean normalization of $Q_R^i$ then determines $c_R$. By the exterior decay stated in \Cref{lem:truncated-stokes}, $|Q^i(\by)|\le C|\by|^{-2}$ for $|\by|\ge2$, while local boundary Stokes regularity gives $Q^i\in L^{6/5}(B_3\setminus\overline{T_0})$. Hence
\begin{equation}\label{eq:mean-Pi}
 \left|\frac{1}{|D_R|}\int_{D_R}\Pi_R^i\,\mathrm d\by\right|
 =\left|\frac{1}{|D_R|}\int_{D_R}Q^i\,\mathrm d\by\right|
 \le CR^{-2}.
\end{equation}
On the fixed region $B_3\setminus T_0$, the local pressure estimate determines $\Pi_R^i$ up to an additive constant. The annulus $2<|\by|<3$ lies in the common range of this local estimate and \eqref{eq:pressure-oscillation}. Comparing the two bounds there fixes the additive constant. Hence
\[
 \norm{\Pi_R^i-c_R}{L^{6/5}(B_3\setminus T_0)}\le C.
\]
Combining this fixed-region bound with \eqref{eq:pressure-oscillation} yields
\[
 \left|\frac{1}{|D_R|}\int_{D_R}(\Pi_R^i-c_R)\,\mathrm d\by\right|
 \le CR^{-2}.
\]
Together with \eqref{eq:mean-Pi}, this gives $|c_R|\le CR^{-2}$. Therefore, we have
\begin{equation*}
 |\Pi_R^i(\by)|
 \le C\bigl(R^{-1/2}|\by|^{-3/2}+R^{-2}\bigr)
 \le C|\by|^{-2},
 \qquad 2\le|\by|\le R.
\end{equation*}
For the velocity, the estimates follow from $\boldsymbol Z_R^i=\boldsymbol Z^i+\boldsymbol H_R^i$, \eqref{eq:HR-local}, \eqref{eq:outer-stokes-local} and the exterior decay. This yields \eqref{eq:truncated-pointwise} for $R\ge64$. For $4\le R\le64$, the family of annular domains has uniformly controlled $C^{2,\beta}$ geometry and lies in a compact range of scales. The same estimates follow from fixed-domain interior and boundary Stokes regularity after increasing $C$. On $B_2\setminus T_0$, we use the uniform energy bound, Poincar\'e--Sobolev and the fixed-domain pressure estimate, with its additive constant matched to $2<|\by|<3$, to obtain \eqref{eq:truncated-near-field}.

Integration by parts against $\boldsymbol V_R^j$ gives
\[
 (\bM_R)_{ij}
 =\be_j\cdot\int_{\partial B_R}
 (\nabla\boldsymbol V_R^i-Q_R^i\bI)\bn\,\mathrm dS.
\]
Since $\bM_R$ is symmetric,
\[
 \int_{\partial B_R}\boldsymbol t_R^i\,\mathrm dS=\bM_R\be_i.
\]
Pointwise control on $\partial B_R$ implies $|\boldsymbol t_R^i|\le CR^{-2}$ and therefore $\|\boldsymbol t_R^i\|_{L^2(\partial B_R)}\le CR^{-1}$. The constants in this lemma depend only on the reference hole $T_0$, its fixed regularity class and the fixed annular estimates. In particular, they are independent of $R$.
\end{proof}

With the exterior problem under control, we transfer the estimates to the microscopic cells. Apply \Cref{lem:truncated-stokes} with $R_\eps=(4\eps^2)^{-1}$ and set $B_{\eps,\boldsymbol k}=B(\eps\boldsymbol k,\eps/4)$. In $B_{\eps,\boldsymbol k}\setminus T_{\eps,\boldsymbol k}$ define
\[
 \boldsymbol W_{\eps,\boldsymbol k}^i(\bx)
 =\boldsymbol V_{R_\eps}^i\!\left(\frac{\bx-\eps\boldsymbol k}{\eps^3}\right),
 \qquad
 Q_{\eps,\boldsymbol k}^i(\bx)
 =\eps^{-3}Q_{R_\eps}^i\!\left(\frac{\bx-\eps\boldsymbol k}{\eps^3}\right).
\]
Since the balls $B_{\eps,\boldsymbol k}$ are pairwise disjoint, define the global $i$th velocity corrector by
\[
 \boldsymbol W_\eps^i(\bx)=
 \begin{cases}
  \boldsymbol0,&\bx\in T_{\eps,\boldsymbol k}\ \text{for some }\boldsymbol k\in\cK_\eps,\\
  \boldsymbol W_{\eps,\boldsymbol k}^i(\bx),&\bx\in B_{\eps,\boldsymbol k}\setminus T_{\eps,\boldsymbol k}\ \text{for some }\boldsymbol k\in\cK_\eps,\\
  \be_i,&\bx\in\Omega\setminus\displaystyle\bigcup_{\boldsymbol k\in\cK_\eps}B_{\eps,\boldsymbol k}.
 \end{cases}
\]
The corresponding pressure corrector is
\[
 Q_\eps^i(\bx)=
 \begin{cases}
  Q_{\eps,\boldsymbol k}^i(\bx),&\bx\in B_{\eps,\boldsymbol k}\setminus T_{\eps,\boldsymbol k}\ \text{for some }\boldsymbol k\in\cK_\eps,\\
  0,&\text{otherwise}.
 \end{cases}
\]
Set $\bW_\eps=(\boldsymbol W_\eps^1\ \boldsymbol W_\eps^2\ \boldsymbol W_\eps^3)$ and $\bQ_\eps=(Q_\eps^1,Q_\eps^2,Q_\eps^3)$.

For each $\boldsymbol k\in\cK_\eps$, set
\[
 S_{\eps,\boldsymbol k}=\partial B_{\eps,\boldsymbol k},
 \qquad
 \boldsymbol t_{\eps,\boldsymbol k}^i(\bx)
 =\eps^{-3}
 (\nabla\boldsymbol V_{R_\eps}^i-Q_{R_\eps}^i\bI)
 \!\left(\frac{\bx-\eps\boldsymbol k}{\eps^3}\right)\bn
 \in L^2(S_{\eps,\boldsymbol k};\R^3).
\]
Define the surface distribution on $C_c^\infty(\Oe;\R^3)$ by
\begin{equation}\label{eq:surface-distribution-definition}
 \dual{\cT_\eps^i}{\bz}
 :=\sum_{\boldsymbol k\in\cK_\eps}
 \int_{S_{\eps,\boldsymbol k}}
 \boldsymbol t_{\eps,\boldsymbol k}^i\cdot\bz\,\mathrm dS.
\end{equation}
Cellwise integration by parts identifies this distribution with $-\Delta\boldsymbol W_\eps^i+\nabla Q_\eps^i$ in $\mathcal D'(\Oe;\R^3)$.

\begin{lemma}[Cell-capacity residual]\label{lem:cell-residual}
For $\eps$ small enough,
the correctors $\bW_\eps$ and $\bQ_\eps$ satisfy
\begin{equation}\label{eq:raw-corrector-estimates}
 \norm{\bW_\eps-\bI}{L^6(\Omega)}
 +\norm{\nabla\bW_\eps}{L^{6/5}(\Oe)}
 +\norm{\bQ_\eps}{L^{6/5}(\Oe)}
 \le C\eps,
 \qquad
 \norm{\nabla\bW_\eps}{L^2(\Oe)}\le C.
\end{equation}
For each column, let $\cT_\eps^i$ be the surface distribution defined in \eqref{eq:surface-distribution-definition}. Then $\cT_\eps^i$ extends uniquely to $(H_0^1(\Oe;\R^3))^\ast$ and
\begin{equation*}
 \left|\dual{\cT_\eps^i-\bM_0\be_i}{\bz_\eps}\right|
 \le C\eps\norm{\bz_\eps}{H^1(\Oe)}
 \qquad
 \text{for any }\bz_\eps\in H_0^1(\Oe;\R^3).
\end{equation*}
\end{lemma}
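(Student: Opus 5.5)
The proof has two parts: the raw corrector bounds in \eqref{eq:raw-corrector-estimates}, obtained by rescaling \Cref{lem:truncated-stokes} cell by cell, and the dual-norm residual bound, obtained by comparing each spherical surface integral with a volume average over the cell.

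\textbf{Raw corrector bounds.} Under $\bx=\eps\boldsymbol k+\eps^3\by$, the cell ball $B_{\eps,\boldsymbol k}$ corresponds to $|\by|<R_\eps=(4\eps^2)^{-1}$. Since $\bW_\eps-\bI$ vanishes outside the balls, each quantity is a sum over at most $C\eps^{-3}$ cells.

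For the velocity, \eqref{eq:truncated-pointwise} gives $|\boldsymbol V_{R}^i-\be_i|\le C|\by|^{-1}$ for $|\by|\ge 2$, and \eqref{eq:truncated-near-field} controls $B_2$. Hence $\int_{D_R}|\boldsymbol V_R^i-\be_i|^6\,\mathrm d\by\le C$. Inside the hole the field equals $-\be_i$, which contributes $O(1)$ in $\by$. Rescaling multiplies the $L^6$ integral by $\eps^9$, so the total is $\eps^{-3}\cdot\eps^9=\eps^6$. Taking the sixth root gives $\norm{\bW_\eps-\bI}{L^6}\le C\eps$.

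For the gradient in $L^{6/5}$, use $|\nabla\boldsymbol V_R|\le C|\by|^{-2}$ for $|\by|\ge2$. Then $\int_{D_R}|\nabla\boldsymbol V_R|^{6/5}\,\mathrm d\by\le C\int_2^{R}r^{-12/5}r^2\,\mathrm dr\le CR^{3/5}$. Rescaling contributes the factor $\eps^{9}\eps^{-18/5}$ per cell. The sum is therefore $\eps^{-3}\eps^{9-18/5}\eps^{-6/5}=\eps^{6/5}$, and the $5/6$ power gives $C\eps$. The pressure is handled identically, since $\eps^{-3}Q_R$ has the same scaling as the gradient.

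For the $L^2$ gradient, each cell contributes $\eps^{9}\eps^{-6}\cdot\boldsymbol\xi\cdot\bM_R\boldsymbol\xi=O(\eps^3)$. Summing over $C\eps^{-3}$ cells gives $O(1)$.

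\textbf{Residual bound.} First, the traction rescales as $\boldsymbol t^i_{\eps,\boldsymbol k}(\bx)=\eps^{-3}\boldsymbol t_R^i(\by)$. Combined with \eqref{eq:truncated-stokes-estimates}, this gives $\int_{S_{\eps,\boldsymbol k}}\boldsymbol t^i_{\eps,\boldsymbol k}\,\mathrm dS=\eps^{-3}\eps^{6}\bM_{R_\eps}\be_i=\eps^3\bM_{R_\eps}\be_i$. The pointwise bound is $|\boldsymbol t^i_{\eps,\boldsymbol k}|\le C\eps^{-3}R_\eps^{-2}=C\eps$, which is constant on the sphere up to $O(\eps)$.

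Next, for $\bz\in H_0^1(\Oe)$ extended by zero to $\Omega$, decompose
\[
\int_{S_{\eps,\boldsymbol k}}\boldsymbol t\cdot\bz\,\mathrm dS
=\eps^3\bM_{R_\eps}\be_i\cdot\langle\bz\rangle_{C_{\eps,\boldsymbol k}}
+\int_{S_{\eps,\boldsymbol k}}\boldsymbol t\cdot\bigl(\bz-\langle\bz\rangle_{C_{\eps,\boldsymbol k}}\bigr)\,\mathrm dS,
\]
where $C_{\eps,\boldsymbol k}=\eps(\boldsymbol k+Y)$ is the cell and $\langle\bz\rangle_{C_{\eps,\boldsymbol k}}$ is its mean.

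For the second term, the scaled trace and Poincaré inequality on the cell give
\[
\norm{\bz-\langle\bz\rangle}{L^1(S_{\eps,\boldsymbol k})}\le C\eps^{2}\cdot\eps^{-1/2}\eps^{1/2}\norm{\nabla\bz}{L^2(C_{\eps,\boldsymbol k})}\cdot\eps^{1/2}.
\]
Concretely, $\norm{\bz-\langle\bz\rangle}{L^1(S)}\le C\eps^{3/2}\norm{\nabla\bz}{L^2(C)}$. Multiplying by $|\boldsymbol t|\le C\eps$ and using Cauchy--Schwarz over the $\eps^{-3}$ cells bounds the second term by $C\eps\cdot\eps^{3/2}\eps^{-3/2}\norm{\nabla\bz}{L^2}=C\eps\norm{\nabla\bz}{L^2}$.

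For the first term, $\sum_{\boldsymbol k}\eps^3\langle\bz\rangle_{C_{\eps,\boldsymbol k}}=\int_{\bigcup_{\boldsymbol k} C_{\eps,\boldsymbol k}}\bz\,\mathrm d\bx$. The defect $\Omega\setminus\bigcup_{\boldsymbol k} C_{\eps,\boldsymbol k}$ is a boundary layer of width $O(\eps)$. Since $\bz$ vanishes on $\partial\Omega$, Hardy's inequality and Cauchy--Schwarz bound $\int_{\text{layer}}|\bz|$ by $C\eps\norm{\nabla\bz}{L^2}$. Finally, $\|\bM_{R_\eps}-\bM_0\|\le CR_\eps^{-1}=C\eps^2$, which is lower order.

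\textbf{Extension and the step I expect to be hardest.} Density of $C_c^\infty(\Oe)$ in $H_0^1(\Oe)$ gives the unique extension of $\cT_\eps^i$. The action $\dual{\bM_0\be_i}{\bz}$ is interpreted as $\int_\Omega\bM_0\be_i\cdot\bz\,\mathrm d\bx$.

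The main obstacle is the bookkeeping in the trace/Poincaré estimate on the sphere $S_{\eps,\boldsymbol k}$ inside the cell. The cell contains the hole, but $\bz$ extended by zero is still $H^1$ on the whole cell, so the scaled estimates apply on the full cube. The same step must also ensure that the boundary-layer defect is genuinely $O(\eps)$, which requires $\bz$ to vanish on $\partial\Omega$ together with the $C^{2,\beta}$ regularity of $\Omega$.
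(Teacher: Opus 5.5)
Your proposal is correct and follows the paper's proof essentially step by step. The raw bounds come from rescaling the near-field and decay estimates of \Cref{lem:truncated-stokes} cell by cell. The residual is split into the same three pieces: the traction paired with the cell fluctuation, the $(\bM_{R_\eps}-\bM_0)$ truncation error, and the Hardy-controlled boundary strip $\Gamma_\eps$.

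The only difference is cosmetic. You pair the pointwise bound $|\boldsymbol t^i_{\eps,\boldsymbol k}|\le C\eps$ with an $L^1$ trace--Poincar\'e estimate, whereas the paper pairs $\|\boldsymbol t^i_{\eps,\boldsymbol k}\|_{L^2(S_{\eps,\boldsymbol k})}\le C\eps^2$ with the $L^2$ version. Both give $C\eps^{5/2}\|\nabla\widetilde{\bz}_\eps\|_{L^2(Y_{\eps,\boldsymbol k})}$ per cell. Your first displayed trace inequality has garbled powers, but the bound $C\eps^{3/2}$ you state next and actually use is right.
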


\begin{proof}
Fix a cell and write $r=|\bx-\eps\boldsymbol k|$. Split the fluid part of the cell ball into
\[
 \mathcal N_{\eps,\boldsymbol k}
 =B(\eps\boldsymbol k,2\eps^3)\setminus T_{\eps,\boldsymbol k},
 \qquad
 F_{\eps,\boldsymbol k}
 =B_{\eps,\boldsymbol k}\setminus B(\eps\boldsymbol k,2\eps^3).
\]
For $\bx\in F_{\eps,\boldsymbol k}$, the scaled radius satisfies $2\le r/\eps^3\le R_\eps$, thus \eqref{eq:truncated-pointwise} applies and gives
\[
 |\boldsymbol W_\eps^i-\be_i|\le C\frac{\eps^3}{r},
 \qquad
 |\nabla\boldsymbol W_\eps^i|+|Q_\eps^i|
 \le C\frac{\eps^3}{r^2}.
\]
For $p=6/5$, direct integration in spherical coordinates yields
\[
 \int_{F_{\eps,\boldsymbol k}}
 (|\nabla\boldsymbol W_\eps^i|^p+|Q_\eps^i|^p)\,\mathrm d\bx
 \le C\eps^{21/5},
\]
whereas \eqref{eq:truncated-near-field} and the change of variables $\bx=\eps\boldsymbol k+\eps^3\by$ give
\[
 \int_{\mathcal N_{\eps,\boldsymbol k}}
 (|\nabla\boldsymbol W_\eps^i|^p+|Q_\eps^i|^p)\,\mathrm d\bx
 \le C\eps^{9-3p}=C\eps^{27/5}.
\]
Summing over $O(\eps^{-3})$ cells and taking the $p$th root, we obtain an $O(\eps)$ far-field contribution and an $O(\eps^2)$ near-field contribution. The same decomposition with $p=2$ gives a uniformly bounded total gradient energy. Turning to the velocity,
\[
 \int_{F_{\eps,\boldsymbol k}}|\boldsymbol W_\eps^i-\be_i|^6\,\mathrm d\bx
 \le C\eps^9,
\]
and \eqref{eq:truncated-near-field} gives the same order on $\mathcal N_{\eps,\boldsymbol k}$. The holes contribute $\int_{\He}|\bW_\eps-\bI|^6\,\mathrm d\bx\le C|\He|=O(\eps^6)$. These estimates give \eqref{eq:raw-corrector-estimates}.

To estimate the capacity defect, we begin with the surface functional \eqref{eq:surface-distribution-definition} on $C_c^\infty(\Oe;\R^3)$ and introduce the sphere flux vector
\[
 \boldsymbol\Phi_{\eps,\boldsymbol k}^i
 :=\int_{S_{\eps,\boldsymbol k}}
 \boldsymbol t_{\eps,\boldsymbol k}^i\,\mathrm dS\in\R^3.
\]
Scaling \eqref{eq:truncated-stokes-estimates} gives
\begin{equation}\label{eq:traction-cell-estimates}
 \boldsymbol\Phi_{\eps,\boldsymbol k}^i
 =\eps^3\bM_{R_\eps}\be_i,
 \qquad
 \norm{\boldsymbol t_{\eps,\boldsymbol k}^i}{L^2(S_{\eps,\boldsymbol k})}
 \le C\eps^2.
\end{equation}

Let
\[
 Y_{\eps,\boldsymbol k}=\eps(\boldsymbol k+Y),
 \qquad
 \Omega_\eps^{\mathrm{cell}}
 =\bigcup_{\boldsymbol k\in\cK_\eps}\overline{Y_{\eps,\boldsymbol k}},
 \qquad
 \Gamma_\eps=\Omega\setminus\Omega_\eps^{\mathrm{cell}}.
\]
For $\bz_\eps\in H_0^1(\Oe;\R^3)$, let $\widetilde{\bz}_\eps\in H_0^1(\Omega;\R^3)$ denote the zero extension. Its cell average is denoted by
\begin{equation}\label{eq:cell-average-definition}
 \left\langle\widetilde{\bz}_\eps\right\rangle_{\boldsymbol k}
 :=\eps^{-3}\int_{Y_{\eps,\boldsymbol k}}
 \widetilde{\bz}_\eps\,\mathrm d\bx\in\R^3.
\end{equation}
Cell faces have zero Lebesgue measure, so replacing open cells by their closures leaves all cell integrals unchanged. Every point at distance larger than $\sqrt{3}\eps$ from $\partial\Omega$ belongs to the closure of a cell indexed by $\cK_\eps$. The tubular-neighborhood estimate for the fixed $C^{2,\beta}$ boundary therefore gives
\begin{equation}\label{eq:boundary-strip-definition}
 \Gamma_\eps\subset
 \{\bx\in\Omega:\operatorname{dist}(\bx,\partial\Omega)\le C\eps\},
 \qquad
 |\Gamma_\eps|\le C\eps.
\end{equation}

Taking $\bz_\eps\in C_c^\infty(\Oe;\R^3)$, combining \eqref{eq:surface-distribution-definition}, \eqref{eq:traction-cell-estimates} and \eqref{eq:cell-average-definition}, we obtain the exact decomposition
\begin{align*}
 \dual{\cT_\eps^i-\bM_0\be_i}{\bz_\eps}
 ={}&I_{1,\eps}+I_{2,\eps}+I_{3,\eps},
 \end{align*}
with
 \begin{align*}
 I_{1,\eps}
 :={}&\sum_{\boldsymbol k\in\cK_\eps}
 \int_{S_{\eps,\boldsymbol k}}
 \boldsymbol t_{\eps,\boldsymbol k}^i\cdot
 (\bz_\eps-\left\langle\widetilde{\bz}_\eps\right\rangle_{\boldsymbol k})\,\mathrm dS,\notag\\
 I_{2,\eps}
 :={}&\sum_{\boldsymbol k\in\cK_\eps}
 \eps^3(\bM_{R_\eps}-\bM_0)\be_i\cdot
 \left\langle\widetilde{\bz}_\eps\right\rangle_{\boldsymbol k},\notag\\
 I_{3,\eps}
 :={}&-\int_{\Gamma_\eps}
 \bM_0\be_i\cdot\widetilde{\bz}_\eps\,\mathrm d\bx.\notag
\end{align*}
Throughout this argument, the constant vector $\bM_0\be_i$ is paired with a test field through its zero extension to $\Omega$. Now, we estimate the $ I_{i,\eps}(i=1,2,3)$ separately. 

For $I_{1,\eps}$, the trace--Poincar\'e inequality on the rescaled unit cell gives
\[
 \norm{\bz_\eps-\left\langle\widetilde{\bz}_\eps\right\rangle_{\boldsymbol k}}
 {L^2(S_{\eps,\boldsymbol k})}
 \le C\eps^{1/2}
 \norm{\nabla\widetilde{\bz}_\eps}
 {L^2(Y_{\eps,\boldsymbol k})}.
\]
Hence, by \eqref{eq:traction-cell-estimates},
\begin{align}
 |I_{1,\eps}|
 &\le C\eps^{5/2}
 \sum_{\boldsymbol k\in\cK_\eps}
 \norm{\nabla\widetilde{\bz}_\eps}
 {L^2(Y_{\eps,\boldsymbol k})}\notag\\
 &\le C\eps^{5/2}(\#\cK_\eps)^{1/2}
 \left(\sum_{\boldsymbol k\in\cK_\eps}
 \norm{\nabla\widetilde{\bz}_\eps}
 {L^2(Y_{\eps,\boldsymbol k})}^2\right)^{1/2}\notag\\
 &\le C\eps\norm{\nabla\widetilde{\bz}_\eps}{L^2(\Omega)}.
 \label{eq:I1-bound}
\end{align}
Turning to $I_{2,\eps}$, Jensen's inequality gives
\[
 \eps^3|\left\langle\widetilde{\bz}_\eps\right\rangle_{\boldsymbol k}|^2
 \le\int_{Y_{\eps,\boldsymbol k}}
 |\widetilde{\bz}_\eps|^2\,\mathrm d\bx,
\]
and consequently
\begin{align*}
 \sum_{\boldsymbol k\in\cK_\eps}|\left\langle\widetilde{\bz}_\eps\right\rangle_{\boldsymbol k}|
 &\le(\#\cK_\eps)^{1/2}
 \left(\sum_{\boldsymbol k\in\cK_\eps}
 |\left\langle\widetilde{\bz}_\eps\right\rangle_{\boldsymbol k}|^2\right)^{1/2}\le C\eps^{-3/2}\,\eps^{-3/2}
 \norm{\widetilde{\bz}_\eps}{L^2(\Omega)}
 =C\eps^{-3}\norm{\widetilde{\bz}_\eps}{L^2(\Omega)}.
\end{align*}
Since $R_\eps^{-1}=4\eps^2$ and $\|\bM_{R_\eps}-\bM_0\|\le CR_\eps^{-1}$,
\[
 |I_{2,\eps}|
 \le C\eps^3\eps^2\eps^{-3}
 \norm{\widetilde{\bz}_\eps}{L^2(\Omega)}
 \le C\eps^2\norm{\widetilde{\bz}_\eps}{L^2(\Omega)}.
\]
Finally, for $I_{3,\eps}$, Hardy's inequality on the fixed domain gives
\[
 \left\|\frac{\widetilde{\bz}_\eps}
 {\operatorname{dist}(\cdot,\partial\Omega)}\right\|_{L^2(\Omega)}
 \le C\norm{\nabla\widetilde{\bz}_\eps}{L^2(\Omega)}.
\]
By \eqref{eq:boundary-strip-definition},
\[
 \norm{\widetilde{\bz}_\eps}{L^2(\Gamma_\eps)}
 \le C\eps\norm{\nabla\widetilde{\bz}_\eps}{L^2(\Omega)}.
\]
Therefore, using $|\Gamma_\eps|^{1/2}\le C\eps^{1/2}$, we obtain
\begin{equation}\label{eq:I3-bound}
 |I_{3,\eps}|
 \le C|\Gamma_\eps|^{1/2}
 \norm{\widetilde{\bz}_\eps}{L^2(\Gamma_\eps)}
 \le C\eps^{3/2}
 \norm{\nabla\widetilde{\bz}_\eps}{L^2(\Omega)}
 \le C\eps\norm{\bz_\eps}{H^1(\Oe)},
\end{equation}
for $0<\eps\le1$.

Adding \eqref{eq:I1-bound}--\eqref{eq:I3-bound}, we obtain
\[
 |\dual{\cT_\eps^i-\bM_0\be_i}{\bz_\eps}|
 \le C\eps\norm{\bz_\eps}{H^1(\Oe)}
 \qquad
 \text{for }\bz_\eps\in C_c^\infty(\Oe;\R^3).
\]
Since $C_c^\infty(\Oe;\R^3)$ is dense in $H_0^1(\Oe;\R^3)$, the surface distribution has a unique extension to $(H_0^1(\Oe;\R^3))^\ast$ and the same estimate holds for any test field in that space.
\end{proof}

We now incorporate a slowly varying coefficient and repair the divergence of the raw corrector.

\begin{proposition}[Restriction operator and weighted residual]\label{prop:restriction}There exists $\eps_0>0$ such that, for $0<\eps<\eps_0$, there exists a linear map $\cR_\eps:\cX(\Omega)\longrightarrow H_{0,\sigma}^1(\Oe)$. For $\bv\in\cX(\Omega)$, let
\[
 \widetilde{\cR_\eps\bv}(\bx)
 :=\begin{cases}
 \cR_\eps\bv(\bx),&\bx\in\Oe,\\
 \boldsymbol0,&\bx\in\He
 \end{cases}
\]
denote its zero extension to $\Omega$. Then there hold
\begin{equation}\label{eq:restriction-estimates}
\begin{aligned}
 \norm{\widetilde{\cR_\eps\bv}-\bv}{L^6(\Omega)}
 +\norm{\widetilde{\cR_\eps\bv}-\bv}{L^2(\Omega)}
 &\le C\eps\norm{\bv}{W^{1,\infty}(\Omega)},\\
 \norm{\nabla\cR_\eps\bv}{L^2(\Oe)}
 &\le C\norm{\bv}{W^{1,\infty}(\Omega)},\\
 \norm{\nabla(\widetilde{\cR_\eps\bv}-\bv)}{L^{6/5}(\Omega)}
 &\le C\eps\norm{\bv}{W^{1,\infty}(\Omega)}.
\end{aligned}
\end{equation}
For $a\in W^{1,\infty}(\Omega)$ and $\bz_\eps\in H_{0,\sigma}^1(\Oe)$ set
\begin{equation}\label{eq:weighted-residual-definition}
\begin{aligned}
 \mathfrak R_\eps[a,\bv,\bz_\eps]
 :={}&\int_\Omega2a\,\cD(\bv):\cD(\widetilde{\bz}_\eps)\,\mathrm d\bx
 +\int_\Omega a\,\bM_0\bv\cdot\widetilde{\bz}_\eps\,\mathrm d\bx-\int_{\Oe}2a\,\cD(\cR_\eps\bv):\cD(\bz_\eps)\,\mathrm d\bx.
\end{aligned}
\end{equation}
Then
\begin{equation}\label{eq:weighted-residual}
 |\mathfrak R_\eps[a,\bv,\bz_\eps]|
 \le C\eps\norm{a}{W^{1,\infty}(\Omega)}
 \norm{\bv}{W^{1,\infty}(\Omega)}
 \norm{\bz_\eps}{H^1(\Oe)}.
\end{equation}
If $\bv\in W^{1,\infty}(0,T;\cX(\Omega))$, then
\begin{equation}\label{eq:time-commutation}
 \partial_t(\cR_\eps\bv)=\cR_\eps(\partial_t\bv)
 \quad\text{a.e.\ in }(0,T).
\end{equation}
\end{proposition}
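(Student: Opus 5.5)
We define the restriction as the corrector-weighted field with a divergence repair,
\[
 \cR_\eps\bv:=\bW_\eps\bv-\cB_\eps\bigl(\diver(\bW_\eps\bv)\bigr),\qquad \diver(\bW_\eps\bv)=\nabla\bv:(\bW_\eps-\bI)^{\textup T},
\]
where the second identity uses $\diver\boldsymbol W^i_\eps=0$ and $\diver\bv=0$. The repair $\cB_\eps$ will be a Bogovski\u{\i} operator posed cellwise on the perforated cells; the boundary strip $\Gamma_\eps$ needs no repair because $\bW_\eps=\bI$ there. On a cell $Y_{\eps,\boldsymbol k}\setminus T_{\eps,\boldsymbol k}$ (and only on the ball $B_{\eps,\boldsymbol k}$, where $\bW_\eps\neq\bI$) the datum $f=\nabla\bv:(\bW_\eps-\bI)^{\textup T}$ has to have zero mean. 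We secure this with a local modification. Freeze $\bv$ at the cell center: the frozen field $\bW_\eps\bv(\eps\boldsymbol k)$ is exactly solenoidal. The remainder $\bW_\eps(\bv-\bv(\eps\boldsymbol k))$ vanishes on $\partial T_{\eps,\boldsymbol k}$ and equals $\bv$ on $\partial B_{\eps,\boldsymbol k}$, so its flux through $\partial(B_{\eps,\boldsymbol k}\setminus T_{\eps,\boldsymbol k})$ vanishes. Hence $f$ has zero mean on $B_{\eps,\boldsymbol k}\setminus T_{\eps,\boldsymbol k}$, and the Bogovski\u{\i} correction can be supported there.

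The difficulty is that this domain contains a hole of size $\eps^3$ inside a ball of size $\eps$. A uniform $W^{1,q}$ Bogovski\u{\i} bound requires a two-scale construction, in the spirit of Allaire and of Lu's restriction operator \cite{Lu2020}. We split $f$ with a partition of unity into a far-field part on the annulus $\eps^3\le r\le\eps$ and a near-field part on $B(\eps\boldsymbol k,2\eps^3)\setminus T_{\eps,\boldsymbol k}$. Both domains are John domains with scale-invariant constants, so their Bogovski\u{\i} operators are bounded in every $L^q$, $1<q<\infty$, uniformly in $\eps$. The mean defect created by the cutoff is moved between the two pieces with a fixed bump of the appropriate scale.

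\emph{Estimates \eqref{eq:restriction-estimates}.} From \eqref{eq:raw-corrector-estimates} we get $\|(\bW_\eps-\bI)\bv\|_{L^6}\le C\eps\|\bv\|_{L^\infty}$ and $\|\nabla((\bW_\eps-\bI)\bv)\|_{L^{6/5}}\le C\eps\|\bv\|_{W^{1,\infty}}$. The energy bound $\|\nabla(\bW_\eps\bv)\|_{L^2}\le C$ comes from $\|\nabla\bW_\eps\|_{L^2}\le C$. For the repair, $\|f\|_{L^{6/5}}\le C\|\nabla\bv\|_{L^\infty}\|\bW_\eps-\bI\|_{L^{6/5}}\le C\eps^{2}$ using the pointwise decay, and likewise $\|f\|_{L^2}\le C\eps$. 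The uniform Bogovski\u{\i} bounds then put the repair at order $\eps$ in $W^{1,6/5}$ and in $H^1$. Sobolev embedding gives $L^6$, and the $L^2$ bound follows from $L^6$ on a bounded domain. Linearity of $\cR_\eps$ is clear. Because $\bW_\eps$ and $\cB_\eps$ are time independent, \eqref{eq:time-commutation} follows from linearity and continuity of $\cR_\eps$ on $W^{1,\infty}$ by passing to the limit in difference quotients.

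\emph{Weighted residual \eqref{eq:weighted-residual}.} Since $\bz_\eps$ and $\cR_\eps\bv$ are solenoidal, $2\cD:\cD$ may be replaced by $\nabla:\nabla$ at the cost of terms of the form $\int\nabla a\otimes(\cdot)$. We write $\cR_\eps\bv=\bv+(\bW_\eps-\bI)\bv-\boldsymbol b_\eps$ and expand
\[
 \int a\nabla((\bW_\eps-\bI)\bv):\nabla\bz_\eps=\sum_i\int a v_i\nabla\boldsymbol W^i_\eps:\nabla\bz_\eps+\int a(\bW_\eps-\bI)\nabla\bv:\nabla\bz_\eps.
\]
The last term is $O(\eps)$ by H\"older, using $L^2$ smallness of $\bW_\eps-\bI$ ($\le C\eps^{2}$). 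In the first term we move $av_i$ onto $\bz_\eps$, i.e.\ we test $-\Delta\boldsymbol W^i_\eps+\nabla Q^i_\eps=\cT_\eps^i$ with $av_i\bz_\eps\in H^1_0(\Oe)$. This produces:
\begin{enumerate}
\item[(i)] the pairing $\dual{\cT^i_\eps}{av_i\bz_\eps}=\int a\bM_0\bv\cdot\widetilde\bz_\eps+O(\eps)$, by \Cref{lem:cell-residual}, since $\|av_i\bz_\eps\|_{H^1}\le C\|a\|_{W^{1,\infty}}\|\bv\|_{W^{1,\infty}}\|\bz_\eps\|_{H^1}$;
\item[(ii)] the commutator $\int\nabla\boldsymbol W^i_\eps:(\bz_\eps\otimes\nabla(av_i))$, which is $O(\eps)$ by the $L^{6/5}$ gradient bound paired with $\bz_\eps\in L^6$;
\item[(iii)] the pressure term $\int Q^i_\eps\,\diver(av_i\bz_\eps)=\int Q^i_\eps\,\bz_\eps\cdot\nabla(av_i)$, which is $O(\eps)$ by the $L^{6/5}$ pressure bound.
\end{enumerate}
The repair term $\int a\nabla\boldsymbol b_\eps:\nabla\bz_\eps$ is bounded by $\|\nabla\boldsymbol b_\eps\|_{L^2}\le C\eps$.

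The main obstacle is the uniform divergence repair: controlling $\boldsymbol b_\eps$ in $H^1$ at order $\eps$ requires the two-scale Bogovski\u{\i} construction with $\eps$-uniform constants together with the exact cellwise zero-mean compatibility. That compatibility must hold also for boundary-adjacent cells, which is why the correctors are supported only on cells compactly inside $\Omega$.
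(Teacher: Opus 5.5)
Your proposal is correct. The weighted-residual half is essentially the paper's argument. The paper likewise tests $-\Delta\boldsymbol W_\eps^i+\nabla Q_\eps^i=\cT_\eps^i$ against $av_i\bz_\eps$ and splits the outcome into four pieces: a capacity error handled by \Cref{lem:cell-residual}, a gradient/pressure commutator controlled by the $L^{6/5}$ bounds paired with $L^6$, a slowly varying term $(\bW_\eps-\bI)\nabla\bv$, and a repair term. It then passes from $2\cD:\cD$ to $\nabla:\nabla$ separately on $\Oe$ and on $\Omega$. You should say explicitly that only the difference $\int_{\Oe}((\cR_\eps\bv)_i-v_i)\,\partial_ja\,\partial_iz_{\eps,j}\,\mathrm d\bx$ survives, and that it is $O(\eps)$ by the $L^2$ restriction bound; each of the two terms alone is only $O(1)$.

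The genuine difference is the divergence repair. The paper does not localize. It takes the global right inverse of the divergence on the perforated domain from \cite{DFL}, whose $L^2\to H_0^1$ norm is uniform exactly at $(\alpha,q)=(3,2)$, and it needs only the global compatibility $\int_{\Oe}g_\eps\,\mathrm d\bx=0$. You instead use the exact cellwise compatibility $\int_{B_{\eps,\boldsymbol k}\setminus T_{\eps,\boldsymbol k}}f\,\mathrm d\bx=\int_{\partial B_{\eps,\boldsymbol k}}\bv\cdot\bn\,\mathrm dS=0$, and solve the divergence problem on each punctured ball with a scale-invariant Bogovski\u{\i} operator. Two small points here. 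The freezing step is unnecessary, since $\bW_\eps\bv=\bv$ on $\partial B_{\eps,\boldsymbol k}$ already. As written, the remainder equals $\bv-\bv(\eps\boldsymbol k)$ there, not $\bv$.

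Your two-scale John-domain splitting works. For $q<3$ there is a simpler route: take the Bogovski\u{\i} solution $\boldsymbol w$ on the whole ball and cut it off at scale $\eps^3$. The cost $\eps^{-3}\|\boldsymbol w\|_{L^q(B(\eps\boldsymbol k,2\eps^3))}\le C\|\boldsymbol w\|_{L^{q^*}}$ is uniform in $\eps$.

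What each route buys:
\begin{itemize}
\item Your route is self-contained and keeps $\cR_\eps\bv=\bv$ outside $\bigcup_{\boldsymbol k}B_{\eps,\boldsymbol k}$.
\item It gives $W^{1,q}$ control of the repair for $q=2$ and $q=6/5$ simultaneously, even at the sharper order $\eps^2$, since $\|\bW_\eps-\bI\|_{L^2(\Oe)}\le C\eps^2$.
\item The paper's route is shorter because it delegates the uniform estimate to \cite{DFL}.
\item Its price is checking that the periodic geometry fits that framework, and obtaining only an $L^2$-based repair bound, from which the $L^{6/5}$ gradient estimate is deduced via boundedness of $\Omega$.
\end{itemize}
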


\begin{proof}
Set
\[
 \bU_\eps^0=\sum_{i=1}^3v_i\boldsymbol W_\eps^i=\bW_\eps\bv.
\]
Since each column of $\bW_\eps$ is solenoidal and $\diver\bv=0$, we have
\[
 g_\eps:=\diver\bU_\eps^0
 =(\bW_\eps^{\textup T}-\bI):\nabla\bv,
 \qquad
 \norm{g_\eps}{L^2(\Oe)}
 \le C\eps\norm{\bv}{W^{1,\infty}(\Omega)}.
\]
Moreover, $\bU_\eps^0\in H_0^1(\Oe;\R^3)$, so $\int_{\Oe}g_\eps\,\mathrm d\bx=0$. The divergence repair is obtained from \cite[Theorem~2.3, estimate~(2.21)]{DFL} with $q=2$. The periodic family \eqref{eq:geometry} is a special case of the separated-perforation geometry considered in \cite[Section~2.1]{DFL}, with exponent $\alpha=3$ and fixed shape and separation constants. Fix the linear right inverse supplied by that theorem for $0<\eps<\eps_0$, which depends only on the perforated geometry and is therefore time independent. In the notation of \cite[Theorem~2.3]{DFL},
\[
 \|\cB_\eps^{\mathrm{perf}} f\|_{W_0^{1,q}}
 \le C\bigl(1+\eps^{((3-q)\alpha-3)/q}\bigr)\|f\|_{L^q},
 \qquad 1<q<\infty.
\]
At $(\alpha,q)=(3,2)$ the exponent is zero, so the operator norm is uniform in $\eps$. Thus
\begin{equation}\label{eq:DFL-repair}
 \diver\cB_\eps^{\mathrm{perf}}[g_\eps]=g_\eps,
 \qquad
 \norm{\cB_\eps^{\mathrm{perf}}[g_\eps]}{H_0^1(\Oe)}
 \le C\norm{g_\eps}{L^2(\Oe)}
 \le C\eps\norm{\bv}{W^{1,\infty}(\Omega)}.
\end{equation}
Define
\[
 \cR_\eps\bv=\bU_\eps^0-\cB_\eps^{\mathrm{perf}}[g_\eps].
\]
Then $\cR_\eps\bv\in H_{0,\sigma}^1(\Oe)$. In $\Oe$,
\[
 \cR_\eps\bv-\bv
 =(\bW_\eps-\bI)\bv-\cB_\eps^{\mathrm{perf}}[g_\eps],
\]
so \eqref{eq:raw-corrector-estimates}, \eqref{eq:DFL-repair} and the boundedness of $\Omega$ yield the $L^6$ and $L^2$ bounds in \eqref{eq:restriction-estimates} after zero extension. For the fixed-domain $L^{6/5}$ gradient estimate,
\begin{align*}
 \nabla(\cR_\eps\bv-\bv)
 ={}&\sum_{i=1}^3v_i\nabla\boldsymbol W_\eps^i
 +\sum_{i=1}^3(\boldsymbol W_\eps^i-\be_i)\otimes\nabla v_i
 -\nabla\cB_\eps^{\mathrm{perf}}[g_\eps]
 \qquad\text{in }\Oe.
\end{align*}
For the first term, the $L^{6/5}$ bound is $O(\eps)$ by $\|\nabla\bW_\eps\|_{L^{6/5}}\le C\eps$. The second is $O(\eps)$ in $L^{6/5}$ by $\|\bW_\eps-\bI\|_{L^6}\le C\eps$ together with the boundedness of $\Omega$. The repair term is $O(\eps)$ in $L^2$ and hence also in $L^{6/5}$. Inside the holes, the field $\widetilde{\cR_\eps\bv}-\bv$ equals $-\bv$ and therefore
\[
 \norm{\nabla\bv}{L^{6/5}(\He)}
 \le |\He|^{5/6}\norm{\nabla\bv}{L^\infty}
 \le C\eps^5\norm{\bv}{W^{1,\infty}}.
\]
This yields \eqref{eq:restriction-estimates}.

To prove \eqref{eq:weighted-residual}, fix $\bz_\eps\in C_c^\infty(\Oe;\R^3)$ with $\diver\bz_\eps=0$ and set $\omega_i=av_i.$ \Cref{lem:cell-residual} is valid for any $H_0^1$ test field, hence it applies directly to $\omega_i\bz_\eps\in H_0^1(\Oe;\R^3)$. Define the capacity error
\[
 E_{\mathrm{cap}}
 :=\sum_{i=1}^3\left[
 \dual{\cT_\eps^i}{\omega_i\bz_\eps}
 -\int_\Omega\omega_i\bM_0\be_i\cdot
 \widetilde{\bz}_\eps\,\mathrm d\bx\right].
\]
Then we have
\begin{equation}\label{eq:E-cap-bound}
 |E_{\mathrm{cap}}|
 \le C\eps\norm{a}{W^{1,\infty}}
 \norm{\bv}{W^{1,\infty}}
 \norm{\bz_\eps}{H^1(\Oe)}.
\end{equation}
By the distributional identity for $\cT_\eps^i$ and $\diver\bz_\eps=0$,
\begin{align}
 \dual{\cT_\eps^i}{\omega_i\bz_\eps}
 ={}&\int_{\Oe}\omega_i\nabla\boldsymbol W_\eps^i:
 \nabla\bz_\eps\,\mathrm d\bx+\int_{\Oe}
 (\nabla\boldsymbol W_\eps^i-Q_\eps^i\bI)
 \nabla\omega_i\cdot\bz_\eps\,\mathrm d\bx.
 \label{eq:distribution-product-chain}
\end{align}
Define the pressure/coefficient commutator
\[
 E_{\mathrm{comm}}
 :=\sum_{i=1}^3\int_{\Oe}
 (\nabla\boldsymbol W_\eps^i-Q_\eps^i\bI)
 \nabla\omega_i\cdot\bz_\eps\,\mathrm d\bx.
\]
For spatially varying $a$ and $v_i$, the pressure contribution is nonzero and therefore appears explicitly in the commutator. By \eqref{eq:raw-corrector-estimates} and Sobolev embedding,
\begin{equation}\label{eq:E-comm-bound}
 |E_{\mathrm{comm}}|
 \le C\eps\norm{a}{W^{1,\infty}}
 \norm{\bv}{W^{1,\infty}}
 \norm{\bz_\eps}{H^1(\Oe)}.
\end{equation}
Summing \eqref{eq:distribution-product-chain} over $i=1,2,3$ and using $\sum_{i=1}^3\omega_i\bM_0\be_i=a\bM_0\bv$ yields
\begin{align*}
 &\int_{\Oe}a\sum_{i=1}^3v_i\nabla\boldsymbol W_\eps^i:
 \nabla\bz_\eps\,\mathrm d\bx
 -\int_\Omega a\bM_0\bv\cdot\widetilde{\bz}_\eps\,\mathrm d\bx=E_{\mathrm{cap}}-E_{\mathrm{comm}}.
\end{align*}

Next, we use the identities
\[
 \nabla\bU_\eps^0
 =\sum_{i=1}^3v_i\nabla\boldsymbol W_\eps^i
 +\sum_{i=1}^3\boldsymbol W_\eps^i\otimes\nabla v_i,
 \qquad
 \nabla\bv=\sum_{i=1}^3\be_i\otimes\nabla v_i
\]
to compare the two gradient terms.
Since $\nabla\widetilde{\bz}_\eps=0$ a.e.\ in $\He$,
\begin{align}
 &\int_{\Oe}a\nabla\bU_\eps^0:\nabla\bz_\eps\,\mathrm d\bx
 -\int_\Omega a\nabla\bv:\nabla\widetilde{\bz}_\eps\,\mathrm d\bx
 -\int_\Omega a\bM_0\bv\cdot\widetilde{\bz}_\eps\,\mathrm d\bx
 \notag\\
 &\quad=E_{\mathrm{cap}}-E_{\mathrm{comm}}
 +\underbrace{\int_{\Oe}a\sum_{i=1}^3
 (\boldsymbol W_\eps^i-\be_i)\otimes\nabla v_i:
 \nabla\bz_\eps\,\mathrm d\bx}_{=:E_{\mathrm{slow}}}.
 \label{eq:weighted-chain-two}
\end{align}
The slowly varying replacement is controlled by
\begin{equation}\label{eq:E-slow-bound}
 |E_{\mathrm{slow}}|
 \le C\eps\norm{a}{L^\infty}
 \norm{\bv}{W^{1,\infty}}
 \norm{\bz_\eps}{H^1(\Oe)}.
\end{equation}

Finally, set $\boldsymbol b_\eps=\cB_\eps^{\mathrm{perf}}[g_\eps]$ such that $\cR_\eps\bv=\bU_\eps^0-\boldsymbol b_\eps$, and define
\[
 E_{\mathrm{rep}}
 :=\int_{\Oe}a\nabla\boldsymbol b_\eps:
 \nabla\bz_\eps\,\mathrm d\bx.
\]
By \eqref{eq:DFL-repair},
\begin{equation}\label{eq:E-rep-bound}
 |E_{\mathrm{rep}}|
 \le C\eps\norm{a}{L^\infty}
 \norm{\bv}{W^{1,\infty}}
 \norm{\bz_\eps}{H^1(\Oe)}.
\end{equation}
The four error terms $(E_{\mathrm{cap}},E_{\mathrm{comm}},E_{\mathrm{slow}},E_{\mathrm{rep}})$ are estimated in \eqref{eq:E-cap-bound}, \eqref{eq:E-comm-bound}, \eqref{eq:E-slow-bound} and \eqref{eq:E-rep-bound}, respectively. Rearranging \eqref{eq:weighted-chain-two} gives
\begin{align}
 \mathfrak R_\eps^{\mathrm{full}}[a,\bv,\bz_\eps]
 :={}&\int_\Omega a\nabla\bv:\nabla\widetilde{\bz}_\eps\,\mathrm d\bx
 +\int_\Omega a\bM_0\bv\cdot\widetilde{\bz}_\eps\,\mathrm d\bx
 -\int_{\Oe}a\nabla(\cR_\eps\bv):\nabla\bz_\eps\,\mathrm d\bx
 \notag\\
 ={}&-E_{\mathrm{cap}}+E_{\mathrm{comm}}-E_{\mathrm{slow}}+E_{\mathrm{rep}}.
 \label{eq:full-gradient-residual-chain}
\end{align}
Adding the four error estimates gives
\begin{equation}\label{eq:full-gradient-residual-bound}
 |\mathfrak R_\eps^{\mathrm{full}}[a,\bv,\bz_\eps]|
 \le C\eps\norm{a}{W^{1,\infty}}
 \norm{\bv}{W^{1,\infty}}
 \norm{\bz_\eps}{H^1(\Oe)}.
\end{equation}
By density, \eqref{eq:full-gradient-residual-chain}--\eqref{eq:full-gradient-residual-bound} extend from smooth solenoidal fields to every $\bz_\eps\in H_{0,\sigma}^1(\Oe)$.

To pass from the full-gradient identity to the symmetric-gradient form without integrating by parts across the internal hole boundaries, let $U$ be a Lipschitz domain, $a\in W^{1,\infty}(U)$, and let $\bu,\bz\in H_{0,\sigma}^1(U)$. Then
\begin{align}
 2\int_Ua\,\cD(\bu):\cD(\bz)\,\mathrm d\bx
 &=\int_Ua\nabla\bu:\nabla\bz\,\mathrm d\bx
 +\sum_{i,j=1}^3\int_Ua(\partial_j u_i)(\partial_i z_j)\,\mathrm d\bx\notag\\
 &=\int_Ua\nabla\bu:\nabla\bz\,\mathrm d\bx
 -\sum_{i,j=1}^3\int_Uu_i(\partial_j a)(\partial_i z_j)\,\mathrm d\bx,
 \label{eq:sym-full-identity}
\end{align}
where the boundary term vanishes since $\bu$ has zero trace and the remaining second-derivative term vanishes due to $\diver\bz=0$.

Apply \eqref{eq:sym-full-identity} first on $U=\Oe$ to $(\cR_\eps\bv,\bz_\eps)$ and then on $U=\Omega$ to $(\bv,\widetilde{\bz}_\eps)$. Performing the integrations separately keeps every boundary calculation inside its natural domain and avoids traces of $\cR_\eps\bv-\bv$ on the hole boundaries. Substituting the two identities into \eqref{eq:weighted-residual-definition} gives
\begin{align}
 \mathfrak R_\eps[a,\bv,\bz_\eps]
 ={}&\mathfrak R_\eps^{\mathrm{full}}[a,\bv,\bz_\eps]
 +\sum_{i,j=1}^3\int_{\Oe}
 \bigl((\cR_\eps\bv)_i-v_i\bigr)
 (\partial_j a)(\partial_i z_{\eps,j})\,\mathrm d\bx.
 \label{eq:symmetric-residual-chain}
\end{align}
For the coefficient commutator in \eqref{eq:symmetric-residual-chain}, one has
\begin{align*}
 \left|\sum_{i,j=1}^3\int_{\Oe}
 \bigl((\cR_\eps\bv)_i-v_i\bigr)
 (\partial_j a)(\partial_i z_{\eps,j})\,\mathrm d\bx\right|
 &\le
 C\norm{\widetilde{\cR_\eps\bv}-\bv}{L^2(\Omega)}
 \norm{\nabla a}{L^\infty(\Omega)}
 \norm{\nabla\bz_\eps}{L^2(\Oe)}\\
 &\le
 C\eps\norm{a}{W^{1,\infty}}
 \norm{\bv}{W^{1,\infty}}
 \norm{\bz_\eps}{H^1(\Oe)}.
\end{align*}
We combine this estimate with \eqref{eq:full-gradient-residual-bound} to obtain \eqref{eq:weighted-residual}.

Since both $\bW_\eps$ and the chosen right inverse $\cB_\eps^{\mathrm{perf}}$ are independent of time, the construction commutes with Bochner differentiation. Indeed,
\[
 \cR_\eps\bv
 =\bW_\eps\bv
 -\cB_\eps^{\mathrm{perf}}\!\left[(\bW_\eps^{\textup T}-\bI):\nabla\bv\right]
\]
is a bounded linear expression in $\bv$. Hence Bochner differentiation gives, for $\bv\in W^{1,\infty}(0,T;\cX(\Omega))$,
\[
 \partial_t(\cR_\eps\bv)
 =\bW_\eps\partial_t\bv
 -\cB_\eps^{\mathrm{perf}}\!\left[(\bW_\eps^{\textup T}-\bI):\nabla\partial_t\bv\right]
 =\cR_\eps(\partial_t\bv),
\]
for a.e. $t\in(0,T)$, which is \eqref{eq:time-commutation}.
\end{proof}

For $\bz_\eps\in H_0^1(\Oe;\R^3)$, let $\widetilde{\bz}_\eps$ denote the zero extension to $\Omega$. Then
\[
 \widetilde{\bz}_\eps\in H_0^1(\Omega;\R^3),
 \qquad
 \|\widetilde{\bz}_\eps\|_{H^1(\Omega)}=\|\bz_\eps\|_{H^1(\Oe)},
 \qquad
 \|\cD(\widetilde{\bz}_\eps)\|_{L^2(\Omega)}=\|\cD(\bz_\eps)\|_{L^2(\Oe)}.
\]
The fixed-domain Korn inequality therefore gives
\begin{equation}\label{eq:korn}
 \norm{\bz_\eps}{H^1(\Oe)}
 \le C\norm{\cD(\bz_\eps)}{L^2(\Oe)},
\end{equation}
with $C$ uniform in $\eps$.

The zero extension through the holes yields a matching lower bound for the $L^6(\Omega)$ restriction error. Thus the restriction estimate itself has the sharp scale $\Theta(\eps)$ for nonzero comparison fields, although this does not provide a lower bound for the full relative-energy remainder in \Cref{thm:main}.

\begin{proposition}[Sharp scale of the zero-extension restriction error]
\label{prop:restriction-sharp-scale}
Let $\bv\in\cX(\Omega)$ and let $\bv_\eps\in L^6(\Omega;\R^3)$ vanish a.e.\ in $\He$. Then
\begin{equation}\label{eq:restriction-lower-scale}
 \liminf_{\eps\to0}\eps^{-1}
       \|\bv_\eps-\bv\|_{L^6(\Omega)}
 \ge |T_0|^{1/6}\|\bv\|_{L^6(\Omega)}.
\end{equation}
In particular, for each nonzero $\bv\in\cX(\Omega)$ there exist constants $c_{\bv},C_{\bv}>0$ such that, for all sufficiently small $\eps$,
\begin{equation}\label{eq:restriction-sharp-two-sided}
 c_{\bv}\eps\le\|\widetilde{\cR_\eps\bv}-\bv\|_{L^6(\Omega)}
                  \le C_{\bv}\eps.
\end{equation}
\end{proposition}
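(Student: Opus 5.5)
Since $\bv_\eps$ vanishes a.e.\ in $\He$, we have $\bv_\eps-\bv=-\bv$ on $\He$. Restricting the norm to the holes therefore gives
\[
 \|\bv_\eps-\bv\|_{L^6(\Omega)}^6\ge\int_{\He}|\bv|^6\,\mathrm d\bx
 =\sum_{\boldsymbol k\in\cK_\eps}\int_{T_{\eps,\boldsymbol k}}|\bv|^6\,\mathrm d\bx .
\]
The lower bound \eqref{eq:restriction-lower-scale} thus reduces to a Riemann-sum statement: we want
\[
 \eps^{-6}\int_{\He}|\bv|^6\,\mathrm d\bx\to|T_0|\,\|\bv\|_{L^6(\Omega)}^6 .
\]
Taking sixth roots then gives the liminf. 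The scaling is consistent: each hole has volume $\eps^9|T_0|$, there are about $\eps^{-3}|\Omega|$ of them, and $\eps^{-6}\cdot\eps^{9}\cdot\eps^{-3}=1$ times the cell density.

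To prove the limit, set $\phi=|\bv|^6$. This is Lipschitz on $\overline\Omega$, since $\bv\in W^{1,\infty}$ is bounded and $z\mapsto|z|^6$ is locally Lipschitz. For each $\boldsymbol k\in\cK_\eps$, we compare $\int_{T_{\eps,\boldsymbol k}}\phi$ with $\eps^9|T_0|\phi(\eps\boldsymbol k)$. The hole has diameter $\le\eps^3$, so the error is at most $C\eps^{12}\|\nabla\phi\|_\infty$. Summing over the $O(\eps^{-3})$ holes, the total error is $O(\eps^9)$, which is $o(\eps^6)$. Next we compare $\eps^3\phi(\eps\boldsymbol k)$ with $\int_{Y_{\eps,\boldsymbol k}}\phi$. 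The per-cell error is $O(\eps^4)$, so the total is $O(\eps)$. Finally,
\[
 \sum_{\boldsymbol k}\int_{Y_{\eps,\boldsymbol k}}\phi=\int_\Omega\phi-\int_{\Gamma_\eps}\phi ,
\]
and the boundary-strip bound $|\Gamma_\eps|\le C\eps$ from \eqref{eq:boundary-strip-definition} gives $\int_{\Gamma_\eps}\phi=O(\eps)$. Altogether,
\[
 \eps^{-6}\int_{\He}\phi=|T_0|\Bigl(\int_\Omega\phi+O(\eps)\Bigr)+O(\eps^3),
\]
which converges to $|T_0|\|\bv\|_{L^6}^6$. This is in fact a full limit, stronger than the liminf.

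For \eqref{eq:restriction-sharp-two-sided}, take $\bv_\eps=\widetilde{\cR_\eps\bv}$. It lies in $L^6$ (it is in $H^1_0(\Omega)$) and vanishes in $\He$ by construction. The upper bound is the first line of \eqref{eq:restriction-estimates}, with $C_{\bv}=C\|\bv\|_{W^{1,\infty}}$. For the lower bound, $\bv\neq0$ together with continuity gives $\|\bv\|_{L^6}>0$, and $|T_0|>0$ because $T_0$ is a nonempty open $C^{2,\beta}$ domain. Applying the liminf with any $c_{\bv}<|T_0|^{1/6}\|\bv\|_{L^6}$ then yields the bound for $\eps$ small.

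The only step that needs care is the Riemann-sum limit: handling the boundary cells excluded from $\cK_\eps$, and making sure the Lipschitz bound for $\phi$ is used on $\overline\Omega$. The strip estimate already established in the proof of \Cref{lem:cell-residual} covers the boundary cells, so no real obstacle is expected.
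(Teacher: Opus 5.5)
Your proposal is correct and follows essentially the same route as the paper: restrict the norm to $\He$, rescale each hole, use the Lipschitz bound on $|\bv|^6$ to reduce to the Riemann sum $|T_0|\eps^3\sum_{\boldsymbol k}|\bv(\eps\boldsymbol k)|^6$ with an $O(\eps^3)$ error, and control the omitted boundary cells by $|\Gamma_\eps|\le C\eps$. You then apply the bound to $\widetilde{\cR_\eps\bv}$ together with \eqref{eq:restriction-estimates}, and your per-hole and per-cell error bookkeeping simply makes explicit the Riemann-sum convergence that the paper states briefly.
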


\begin{proof}
Since $\bv_\eps=0$ in $\He$,
\[
 \|\bv_\eps-\bv\|_{L^6(\Omega)}^6
 \ge\int_{\He}|\bv|^6\,\mathrm d\bx.
\]
The cell geometry and the Lipschitz continuity of $|\bv|^6$ give
\[
\begin{aligned}
 \eps^{-6}\int_{\He}|\bv|^6\,\mathrm d\bx
 &=\eps^3\sum_{\boldsymbol k\in\cK_\eps}
       \int_{T_0}|\bv(\eps\boldsymbol k+\eps^3\by)|^6
                                                   \,\mathrm d\by\\
 &=|T_0|\eps^3\sum_{\boldsymbol k\in\cK_\eps}
                    |\bv(\eps\boldsymbol k)|^6+O(\eps^3)
 \to |T_0|\int_\Omega|\bv|^6\,\mathrm d\bx.
\end{aligned}
\]
For the last limit, the full-cell Riemann sums converge to the integral, while the omitted boundary strip has measure $O(\eps)$ by \eqref{eq:boundary-strip-definition}. Taking sixth roots establishes \eqref{eq:restriction-lower-scale}. Applying this estimate with $\bv_\eps=\widetilde{\cR_\eps\bv}$ and then using \eqref{eq:restriction-estimates}, we obtain \eqref{eq:restriction-sharp-two-sided}.
\end{proof}

\section{Uniform a priori bounds and thermal stability}\label{sec:thermal}

With the geometric restriction estimates in hand, we turn to the fixed-domain thermal problem. The thermal operator satisfies
\[
 \kmin|\boldsymbol\xi|^2\le \ke(\bx)|\boldsymbol\xi|^2
 \le \kmax|\boldsymbol\xi|^2
 \qquad\text{a.e.\ in }\Omega,
\]
so all elliptic constants in the following estimates are uniform in $\eps$.

\begin{lemma}[Uniform bounds]\label{lem:uniform}
Let $(\rhoe,\ue,\te)$ be a weak solution under the hypotheses of \Cref{thm:main}. There exists $C>0$, uniform in $\eps$ and independent of the particular weak solution, such that
\begin{equation}\label{eq:uniform-bounds}
 \esssup_{0\le t\le T}\norm{\widetilde{\ue}(t)}{L^{2}(\Omega)}^{2}
 +\int_{0}^{T}\norm{\nabla\widetilde{\ue}}{L^{2}(\Omega)}^{2}\,\mathrm dt
 +\esssup_{0\le t\le T}\norm{\te(t)}{H^{1}(\Omega)}^{2}
 \le C\left(1+\norm{\uzeroe}{L^2(\Oe)}^2\right),
\end{equation}
and
\[
 \rhomin\le\widehat{\rhoe}(t,\bx)\le\rhomax
 \quad\text{for a.e. }(t,\bx)\in(0,T)\times\Omega.
\]
\end{lemma}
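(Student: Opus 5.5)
The plan is to start from the kinetic-energy inequality \eqref{eq:kinetic-energy} and close it with a Gronwall argument. The thermal forcing $-\te\bg$ is controlled through the quasi-static heat equation. The density bounds come from the renormalized formulation, and they are used first.

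\emph{Step 1: density bounds.} Apply \eqref{eq:renormalized-continuity} with $b(\rho)=(\rho-\rhomax)_+^2$ and with $b(\rho)=(\rhomin-\rho)_+^2$, after a $C^1$ regularization of these functions. By construction $\rhomin\le\widehat{\rhozeroe}\le\rhomax$, so both quantities vanish initially. Since $\diver\widetilde{\ue}=0$ and $\widetilde{\ue}$ has zero normal trace on $\partial\Omega$, letting $\psi$ tend to the indicator of $[0,t)\times\Omega$ gives $\int_\Omega b(\widehat{\rhoe}(t))\,\mathrm d\bx=\int_\Omega b(\widehat{\rhozeroe})\,\mathrm d\bx=0$. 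This yields $\rhomin\le\widehat{\rhoe}\le\rhomax$ a.e. The only care needed is justifying the cutoff of $\psi$ near $\partial\Omega$ and in time, which is standard DiPerna--Lions (Lions) theory.

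\emph{Step 2: thermal bound in terms of the velocity.} For a.e.\ $t$, test \eqref{eq:weak-heat} with $\varphi=\te(t)$. Uniform ellipticity $\ke\ge\kmin$ gives
\[
 \kmin\norm{\nabla\te}{L^2(\Omega)}^2\le\norm{\bg}{L^\infty}\norm{\widetilde{\ue}}{L^2(\Omega)}\norm{\te}{L^2(\Omega)}.
\]
The zero mean $\int_\Omega\te=0$ allows the Poincar\'e--Wirtinger inequality on the fixed domain $\Omega$, whose constant is independent of $\eps$. Hence
\[
 \norm{\te(t)}{H^1(\Omega)}\le C\norm{\widetilde{\ue}(t)}{L^2(\Omega)}.
\]
This is the key point: working on $\Omega$ rather than $\Oe$ makes all constants uniform in $\eps$.

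\emph{Step 3: Gronwall for the velocity.} In \eqref{eq:kinetic-energy}, use $\mu\ge\mumin$, $\rhoe\le\rhomax$ and $\bfv\in L^\infty$. Together with Step 2, the right-hand side is bounded by
\[
 \tfrac12\rhomax\norm{\uzeroe}{L^2}^2+C\int_0^t\bigl(1+\norm{\widetilde{\ue}}{L^2}^2\bigr)\,\mathrm ds.
\]
The left-hand side dominates $\tfrac{\rhomin}{2}\norm{\widetilde{\ue}(t)}{L^2}^2+2\mumin\int_0^t\norm{\cD(\ue)}{L^2}^2$. Gronwall's lemma, used in its version for inequalities holding for a.e.\ $t$, bounds $\esssup_t\norm{\widetilde{\ue}}{L^2}^2$ by $C(1+\norm{\uzeroe}{L^2}^2)$, with $C$ depending on $T$, $\rhomin$, $\rhomax$, $\mumin$ and the data. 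Substituting this back controls $\int_0^T\norm{\cD(\ue)}{L^2}^2$. The Korn inequality \eqref{eq:korn}, which is uniform in $\eps$ via zero extension, then converts this into the bound on $\int_0^T\norm{\nabla\widetilde{\ue}}{L^2(\Omega)}^2$.

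Finally, Step 2 transfers the $L^\infty_tL^2$ velocity bound to $\esssup_t\norm{\te}{H^1}^2$. I expect the only delicate point to be Step 1, namely making the renormalized-density argument rigorous on the fixed domain $\Omega$ with the zero-extended velocity. The energy estimates themselves are routine once the $\eps$-uniform Poincar\'e--Wirtinger and Korn constants are in place.
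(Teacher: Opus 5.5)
Your proof is correct, but it closes the velocity estimate by a different route from the paper.

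\textbf{The paper's route.} The paper starts from the combined energy inequality \eqref{eq:combined-energy}, in which the thermal work never appears. Testing \eqref{eq:weak-heat} with $\te$ gives the identity $\int_{\Oe}\te\bg\cdot\ue\,\mathrm d\bx=\int_\Omega\ke|\nabla\te|^2\,\mathrm d\bx\ge0$. So the term $-\te\bg$ in \eqref{eq:kinetic-energy} has a favourable sign: it is exactly the thermal dissipation. The only remaining source, $\int\rhoe\bfv\cdot\ue$, is bounded by $C\|\widetilde{\ue}\|_{L^2}\le\eta\|\nabla\widetilde{\ue}\|_{L^2}^2+C_\eta$ via Poincar\'e--Korn and absorbed into the viscous dissipation. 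This buys three things:
\begin{itemize}
\item no Gronwall step is needed;
\item the constant grows only linearly in $T$;
\item $\int_0^T\|\nabla\te\|_{L^2}^2$ comes for free.
\end{itemize}
The pointwise-in-time $H^1$ bound for $\te$ is then recovered exactly as in your Step~2.

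\textbf{Your route.} You instead treat $-\te\bg$ as a perturbation, control it by the elliptic bound $\|\te\|_{H^1}\le C\|\widetilde{\ue}\|_{L^2}$, and close with an integral Gronwall inequality. This is legitimate since $\|\widetilde{\ue}\|_{L^2}^2\in L^\infty(0,T)$, and it is the same device the paper uses in the proof of \Cref{thm:main}. It costs an $e^{CT}$ constant. That constant is still uniform in $\eps$ and in the weak solution, which is all the lemma asserts. Your route has the mild advantage of not relying on the sign structure of the thermal coupling.

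\textbf{On Step~1.} With the test class $b\in C^1([\rhomin,\rhomax])$ of \Cref{def:finite-energy-weak}, your functions $(\rho-\rhomax)_+^2$ and $(\rhomin-\rho)_+^2$ vanish identically. So the density bounds are really built into the definition, and into the DiPerna--Lions construction of Appendix~\ref{sec:micro-existence}, rather than derived here. The paper simply records them in one line, so this is not a gap.
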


\begin{proof}
Renormalized transport preserves the density bounds. For the velocity and the time-integrated dissipation, we start from the combined energy inequality \eqref{eq:combined-energy}. Since $\rhoe\le\rhomax$ and $\bfv\in L^\infty$, the fixed-domain Poincar\'e--Korn inequality and Young's inequality give, for $\eta>0$,
\[
 \left|\int_{\Oe}\rhoe\bfv\cdot\ue\,\mathrm d\bx\right|
 \le C\norm{\widetilde{\ue}}{L^2(\Omega)}
 \le \eta\norm{\nabla\widetilde{\ue}}{L^2(\Omega)}^2+C_\eta.
\]
Choosing $\eta$ small enough and using $\rhoe\ge\rhomin$ and $\mu(\te)\ge\mumin$ in \eqref{eq:combined-energy} yields
\[
 \esssup_{0\le t\le T}\norm{\widetilde{\ue}(t)}{L^2(\Omega)}^2
 +\int_0^T\norm{\nabla\widetilde{\ue}}{L^2(\Omega)}^2\,\mathrm dt
 +\int_0^T\norm{\nabla\te}{L^2(\Omega)}^2\,\mathrm dt
 \le C\left(1+\norm{\uzeroe}{L^2(\Oe)}^2\right).
\]
To recover the pointwise-in-time thermal bound, test \eqref{eq:weak-heat} by $\te(t)$ for a.e. $t\in(0,T)$. Using the zero-mean condition and the fixed-domain Poincar\'e--Wirtinger inequality, we obtain
\[
 \kmin\norm{\nabla\te(t)}{L^2(\Omega)}^2
 \le C\norm{\widetilde{\ue}(t)}{L^2(\Omega)}
       \norm{\nabla\te(t)}{L^2(\Omega)}.
\]
If $\nabla\te(t)=0$, the desired estimate is immediate. Otherwise, division by $\|\nabla\te(t)\|_{L^2(\Omega)}$ and another application of Poincar\'e--Wirtinger give
\[
 \norm{\te(t)}{H^1(\Omega)}
 \le C\norm{\widetilde{\ue}(t)}{L^2(\Omega)}
 \quad\text{for a.e. }t\in(0,T).
\]
Taking the essential supremum proves \eqref{eq:uniform-bounds}.
\end{proof}

To compare the microscopic and effective temperatures on the fixed domain, set
\[
 \delta\vartheta_{\eps}=\te-\tf,
 \qquad
 \delta\bu_{\eps}=\widetilde{\ue}-\uf.
\]
Writing $-\kf\Delta\tf=-\diver(\kf\nabla\tf)=\uf\cdot\bg$ and subtracting it from the microscopic heat equation gives
\begin{equation}\label{eq:thermal-difference}
 \left\{
 \begin{aligned}
 -\diver(\ke\nabla\delta\vartheta_{\eps})
 &=\delta\bu_{\eps}\cdot\bg
   +\diver\!\bigl((\ke-\kf)\nabla\tf\bigr),
 &&\text{in }\Omega,\\
 \ke\nabla\delta\vartheta_{\eps}\cdot\bn&=0,
 &&\text{on }\partial\Omega,\\
 \int_{\Omega}\delta\vartheta_{\eps}\,\mathrm d\bx&=0,
 \end{aligned}
 \right.
\end{equation}
Since $\ke-\kf$ is supported in $\He\subset\subset\Omega$, the Neumann compatibility condition follows. By incompressibility and the zero normal trace,
\[
 \int_{\Omega}\delta\bu_{\eps}\cdot\nabla F\,\mathrm d\bx=0.
\]

Coercivity and Poincar\'e--Wirtinger provide the $H^1$ control for \eqref{eq:thermal-difference}. Pointwise stability requires the following zero-mean Neumann estimate.

\begin{lemma}[Zero-mean Neumann $L^\infty$ estimate]\label{lem:neumann-linf-general}
Let $A\in L^\infty(\Omega;\R^{3\times3})$ satisfy, for some constants $0<\lambda\le\Lambda<\infty$,
\[
 A(\bx)\boldsymbol\xi\cdot\boldsymbol\xi\ge\lambda|\boldsymbol\xi|^2,
 \qquad |A(\bx)|\le\Lambda
 \quad\text{for a.e. }\bx\in\Omega\text{ and all }\boldsymbol\xi\in\R^3.
\]
Suppose $w\in H^1(\Omega)$, $\int_\Omega w\,\mathrm d\bx=0$, satisfies
\[
 \int_\Omega A\nabla w\cdot\nabla\varphi\,\mathrm d\bx
 =\int_\Omega h\varphi\,\mathrm d\bx
 -\int_\Omega\boldsymbol H\cdot\nabla\varphi\,\mathrm d\bx
 \qquad\text{for any }\varphi\in H^1(\Omega),
\]
with $h\in L^2(\Omega)$ and $\boldsymbol H\in L^6(\Omega;\R^3)$. Then
\begin{equation}\label{eq:neumann-Linf}
 \|w\|_{L^\infty(\Omega)}
 \le C_{\Omega,\lambda,\Lambda}
 \left(\|h\|_{L^2(\Omega)}+\|\boldsymbol H\|_{L^6(\Omega)}+\|w\|_{L^2(\Omega)}\right).
\end{equation}
The constant depends only on $\Omega$, $\lambda$ and $\Lambda$.
\end{lemma}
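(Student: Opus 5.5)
The plan is a De Giorgi (or Moser) iteration adapted to the Neumann setting. We work on the whole domain $\Omega$ with test functions that need not vanish on $\partial\Omega$. The coefficient $A$ is only bounded and measurable, so no regularity beyond De Giorgi--Nash--Moser is available. The exponents are exactly the critical ones in three dimensions. For the source, $h\in L^2=L^{q}$ with $q>3/2=n/2$. For the flux, $\boldsymbol H\in L^6=L^{p}$ with $p>3=n$. These are precisely the Stampacchia thresholds for boundedness, so the $L^\infty$ bound should follow from the standard level-set argument, once we check that the Neumann problem on a $C^{2,\beta}$ (hence Lipschitz) domain admits the required Sobolev inequality up to the boundary.

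Concretely, for $k\ge0$ we test the weak formulation with $\varphi=(w-k)_+\in H^1(\Omega)$, which is admissible because the identity holds for all $\varphi\in H^1(\Omega)$. Let $A(k)=\{w>k\}$. This gives
\[
 \lambda\|\nabla(w-k)_+\|_{L^2}^2\le \|h\|_{L^2}\|(w-k)_+\|_{L^2(A(k))}+\|\boldsymbol H\|_{L^6}|A(k)|^{1/3}\|\nabla(w-k)_+\|_{L^2}.
\]
To convert this into an iteration, we use the Sobolev embedding $H^1(\Omega)\hookrightarrow L^6(\Omega)$ in the inhomogeneous form
\[
 \|(w-k)_+\|_{L^6}\le C\bigl(\|\nabla(w-k)_+\|_{L^2}+\|(w-k)_+\|_{L^2}\bigr).
\]
We do not use the Poincaré form, since $(w-k)_+$ has no mean or trace condition. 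The lower-order term is handled by restricting to levels $k\ge k_0$, with $k_0$ chosen so that $|A(k_0)|\le\tfrac12|\Omega|$. For such $k$, the function $(w-k)_+$ vanishes on half of $\Omega$, so the Poincaré inequality for functions vanishing on a set of measure at least $|\Omega|/2$ gives $\|(w-k)_+\|_{L^6}\le C\|\nabla(w-k)_+\|_{L^2}$. By Chebyshev, we may take
\[
 k_0=C\|w\|_{L^2(\Omega)}|\Omega|^{-1/2},
\]
which is where $\|w\|_{L^2}$ enters \eqref{eq:neumann-Linf}.

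With this inequality in hand, the energy estimate becomes
\[
 \|(w-k)_+\|_{L^6}\le C\bigl(\|h\|_{L^2}|A(k)|^{1/3}+\|\boldsymbol H\|_{L^6}|A(k)|^{1/3}\bigr),
\]
using Hölder to write $\|(w-k)_+\|_{L^2(A(k))}\le|A(k)|^{1/3}\|(w-k)_+\|_{L^6}$. For $m>k\ge k_0$ we then get
\[
 (m-k)|A(m)|^{1/6}\le C F|A(k)|^{1/3},\qquad F:=\|h\|_{L^2}+\|\boldsymbol H\|_{L^6}.
\]
This yields
\[
 |A(m)|\le \frac{C F^6}{(m-k)^6}\,|A(k)|^{2},
\]
and the exponent $2>1$ is what makes the iteration work. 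Stampacchia's lemma then gives $|A(k_0+d)|=0$ with $d=CF|A(k_0)|^{1/6}\le C F$. Hence $w\le k_0+CF$. Applying the same argument to $-w$, which satisfies the equation with data $(-h,-\boldsymbol H)$, gives the bound from below and completes \eqref{eq:neumann-Linf}.

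The main obstacle is the boundary: the test functions do not vanish on $\partial\Omega$, so the usual $H^1_0$ Sobolev inequality is unavailable. We replace it by the Sobolev--Poincaré inequality on the fixed Lipschitz domain $\Omega$ for functions vanishing on a set of positive proportion. This needs some care, because the constant must be uniform over all level sets $A(k)$ with $|A(k)|\le|\Omega|/2$. We get that uniformity from the standard compactness argument together with the $H^1\hookrightarrow L^6$ extension for $C^{2,\beta}$ domains. The resulting constant depends only on $\Omega$, $\lambda$ and $\Lambda$, and in particular is independent of $\eps$ when we apply the lemma with $A=\ke\bI$.
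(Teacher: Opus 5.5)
Your proposal is correct and follows essentially the same route as the paper: a Stampacchia level-set iteration in which the boundary issue is handled by a compactness-derived Sobolev--Poincar\'e inequality for functions vanishing on a fixed proportion of $\Omega$, $k_0$ is chosen by Chebyshev in terms of $\|w\|_{L^2}$, and the key decay estimate is the same, $|A(m)|\le CF^6(m-k)^{-6}|A(k)|^2$. The differences are cosmetic: the paper tests with $(|w|-k)_+\operatorname{sgn}w$ to treat both signs at once, and it runs the dyadic iteration explicitly instead of citing Stampacchia's lemma.
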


\begin{proof}
The proof adapts Stampacchia's iteration \cite{Stampacchia} to the zero-mean Neumann setting. We begin with the uniform Poincar\'e--Sobolev inequality needed for the truncation argument. For $\gamma>0$ there exists $C=C(\Omega,\gamma)$ such that, for measurable $E\subset\Omega$ with $|E|\ge\gamma|\Omega|$ and $v\in H^1(\Omega)$ with $v=0$ a.e.\ on $E$,
\[
 \|v\|_{L^6(\Omega)}\le C\|\nabla v\|_{L^2(\Omega)}.
\]
Indeed, otherwise one could find $E_n$ and $v_n$ with $|E_n|\ge\gamma|\Omega|$, $v_n=0$ on $E_n$, $\|v_n\|_6=1$ and $\|\nabla v_n\|_2\to0$. Since $H^1(\Omega)\hookrightarrow\!\hookrightarrow L^2(\Omega)$, a subsequence converges strongly in $L^2$. The vanishing gradients identify the limit with a constant $c$. Since $|c|^2\gamma|\Omega|\le\int_{E_n}|v_n-c|^2\,\mathrm d\bx\to0$, one has $c=0$. Thus $v_n\to0$ in $L^2(\Omega)$, which together with $\|\nabla v_n\|_{L^2}\to0$ and the three-dimensional Sobolev embedding, gives $\|v_n\|_{L^6}\le C\|v_n\|_{H^1}\to0$, contradicting $\|v_n\|_{L^6}=1$.

If $w=0$ in $L^2(\Omega)$, the conclusion is immediate. Otherwise set $k_0=2|\Omega|^{-1/2}\|w\|_{L^2(\Omega)}$, then for $A_k=\{|w|>k\}$, $k\ge k_0$, Chebyshev's inequality gives $|A_k|\le|\Omega|/4$. The truncation $\phi_k=(|w|-k)_+\operatorname{sgn}w$ belongs to $H^1(\Omega)$ by Lipschitz composition. Since it vanishes on a measurable set of measure at least $3|\Omega|/4$, we can use the preceding zero-set inequality to obtain
\[
 \|\phi_k\|_{L^6(\Omega)}\le C\|\nabla\phi_k\|_{L^2(\Omega)}.
\]
Testing the weak equation with $\phi_k$ and using H\"older's inequality yields
\[
 \lambda\|\nabla\phi_k\|_2^2
 \le C\|h\|_2|A_k|^{1/3}\|\phi_k\|_6
 +C\|\boldsymbol H\|_6|A_k|^{1/3}\|\nabla\phi_k\|_2.
\]
With $D=\|h\|_2+\|\boldsymbol H\|_6$, we have
\[
 \|\phi_k\|_6\le CD|A_k|^{1/3}.
\]
Hence, for $\ell>k\ge k_0$,
\[
 (\ell-k)|A_\ell|^{1/6}\le CD|A_k|^{1/3},
 \qquad
 |A_\ell|\le \frac{CD^6}{(\ell-k)^6}|A_k|^2.
\]
If $D=0$, testing with $w$ gives $\nabla w=0$, and the zero mean implies $w=0$. For $D>0$, take $k_n=k_0+K(1-2^{-n})$ and $Y_n=|A_{k_n}|/|\Omega|$. Since $k_{n+1}-k_n=K2^{-(n+1)}$, the preceding level-set estimate yields
\[
 Y_{n+1}\le C_*2^{6(n+1)}\left(\frac{D}{K}\right)^6Y_n^2,
\]
where $C_*$ depends only on $\Omega$, $\lambda$ and $\Lambda$. Choose $K=C_0D$ with $C_0$ so large that $C_*C_0^{-6}Y_0\le2^{-12}$. By $Y_0\le1/4$, an induction gives $Y_n\le Y_0 2^{-6n}$ for all $n$. Hence $Y_n\to0$ and $|\{|w|>k_0+K\}|=0$. Therefore $\|w\|_\infty\le k_0+K$, which is \eqref{eq:neumann-Linf}.
\end{proof}
For $A=\ke\bI$, one may take $\lambda=\kmin$ and $\Lambda=\kmax$, so \eqref{eq:neumann-Linf} applies uniformly in $\eps$. The energy estimate and \Cref{lem:neumann-linf-general} give the two temperature bounds needed in the relative-energy argument.

\begin{lemma}[Thermal stability]\label{lem:thermal}
Under the hypotheses of \Cref{thm:main}, there exists $C>0$, uniform in $\eps$, such that for a.e. $t\in(0,T)$,
\begin{equation}\label{eq:thermal-H1}
 \norm{\te(t)-\tf(t)}{H^{1}(\Omega)}
 \le C\left(
 \norm{\widetilde{\ue}(t)-\uf(t)}{L^{2}(\Omega)}+\eps^{3}
 \right),
\end{equation}
\begin{equation}\label{eq:thermal-Linf}
 \norm{\te(t)-\tf(t)}{L^{\infty}(\Omega)}
 \le C\left(
 \norm{\widetilde{\ue}(t)-\uf(t)}{L^{2}(\Omega)}+\eps
 \right),
\end{equation}
and
\begin{equation}\label{eq:mu-difference}
 \norm{\mu(\te(t))-\mu(\tf(t))}{L^{\infty}(\Omega)}
 \le C\left(
 \norm{\widetilde{\ue}(t)-\uf(t)}{L^{2}(\Omega)}+\eps
 \right).
\end{equation}
\end{lemma}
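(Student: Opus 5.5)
We work with the difference equation \eqref{eq:thermal-difference} for $\delta\vartheta_\eps=\te-\tf$, written in weak form. For a.e.\ $t$ and every $\varphi\in H^1(\Omega)$, we subtract the weak form of the effective heat equation (tested against the same $\varphi$, using $\partial_{\bn}\tf=0$) from \eqref{eq:weak-heat}. This gives
\[
 \int_\Omega\ke\nabla\delta\vartheta_\eps\cdot\nabla\varphi\,\mathrm d\bx
 =\int_\Omega\delta\bu_\eps\cdot\bg\,\varphi\,\mathrm d\bx
 -\int_\Omega(\ke-\kf)\nabla\tf\cdot\nabla\varphi\,\mathrm d\bx .
\]
This is exactly the setting of \Cref{lem:neumann-linf-general}, with $A=\ke\bI$, $h=\delta\bu_\eps\cdot\bg$ and $\boldsymbol H=(\ke-\kf)\nabla\tf$. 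The zero mean of $\delta\vartheta_\eps$ follows from the normalizations in \eqref{eq:micro-boundary} and \eqref{eq:effective-boundary}.

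\emph{The $H^1$ bound \eqref{eq:thermal-H1}.} We test with $\varphi=\delta\vartheta_\eps(t)$. Coercivity gives $\kmin\|\nabla\delta\vartheta_\eps\|_2^2$ on the left. The first term on the right is bounded by $\|\bg\|_\infty\|\delta\bu_\eps\|_2\|\delta\vartheta_\eps\|_2$. Since $\ke-\kf$ vanishes outside $\He$ and $\nabla\tf\in L^\infty$ by \Cref{ass:strong}, the second term is bounded by
\[
 C|\ks-\kf|\,\|\nabla\tf\|_\infty|\He|^{1/2}\|\nabla\delta\vartheta_\eps\|_2\le C\eps^3\|\nabla\delta\vartheta_\eps\|_2 ,
\]
using \eqref{eq:volume-holes}. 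Poincar\'e--Wirtinger (zero mean) then lets us replace $\|\delta\vartheta_\eps\|_2$ by $C\|\nabla\delta\vartheta_\eps\|_2$. Dividing through and applying Poincar\'e--Wirtinger once more yields \eqref{eq:thermal-H1}.

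\emph{The $L^\infty$ bound \eqref{eq:thermal-Linf}.} We apply \eqref{eq:neumann-Linf}, whose constant is uniform in $\eps$ because $\lambda=\kmin$ and $\Lambda=\kmax$. The three terms on the right are controlled as follows:
\begin{itemize}
 \item $\|h\|_2\le C\|\delta\bu_\eps\|_2$;
 \item $\|\boldsymbol H\|_{L^6}\le C\|\nabla\tf\|_\infty|\He|^{1/6}\le C\eps$, again by \eqref{eq:volume-holes};
 \item $\|\delta\vartheta_\eps\|_2\le C(\|\delta\bu_\eps\|_2+\eps^3)$, by the $H^1$ bound just proved.
\end{itemize}
Summing gives \eqref{eq:thermal-Linf}. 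The loss from $\eps^3$ to $\eps$ comes precisely from the $L^6$ norm of the conductivity defect: it is an $L^6$ norm of a function supported on a set of measure $O(\eps^6)$, so it contributes $|\He|^{1/6}=O(\eps)$. This is why the $L^\infty$ estimate carries $\eps$ rather than $\eps^3$.

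\emph{The viscosity bound \eqref{eq:mu-difference}.} Since $\mu\in W^{1,\infty}(\R)$, we have pointwise $|\mu(\te)-\mu(\tf)|\le\|\mu'\|_\infty|\te-\tf|$ a.e., and \eqref{eq:mu-difference} follows from \eqref{eq:thermal-Linf}.

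\emph{Main obstacle.} The only delicate point is justifying the weak formulation of the effective equation on all of $H^1(\Omega)$ with the Neumann boundary term vanishing. This holds because $\tf\in W^{2,\infty}$ satisfies $\partial_{\bn}\tf=0$, so the boundary term drops out after integration by parts. The Neumann compatibility, needed for $\varphi$ equal to constants, is already verified in the paper: both right-hand sides have zero integral against constants.
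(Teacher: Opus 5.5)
Your proposal is correct and follows the paper's proof essentially line by line. The $H^1$ bound comes from testing the difference equation with $\delta\vartheta_\eps$, Poincar\'e--Wirtinger and $|\He|^{1/2}\le C\eps^3$; the $L^\infty$ bound comes from \Cref{lem:neumann-linf-general} with $A=\ke\bI$, $h=\delta\bu_\eps\cdot\bg$, $\boldsymbol H=(\ke-\kf)\nabla\tf$ and $\|\boldsymbol H\|_{L^6}\le C|\He|^{1/6}\le C\eps$; the viscosity bound follows from the Lipschitz continuity of $\mu$. Your explicit derivation of the weak difference equation, using $\partial_{\bn}\tf=0$, is a small justification that the paper leaves implicit.
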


\begin{proof}
Testing \eqref{eq:thermal-difference} by $\delta\vartheta_{\eps}$ gives
\[
 \kmin\norm{\nabla\delta\vartheta_{\eps}}{L^{2}}^{2}
 \le
 C\norm{\delta\bu_{\eps}}{L^{2}}
 \norm{\delta\vartheta_{\eps}}{L^{2}}
 +\abs{\ks-\kf}\,
 \norm{\mathbf 1_{\He}\nabla\tf}{L^{2}}
 \norm{\nabla\delta\vartheta_{\eps}}{L^{2}}.
\]
Since both temperatures satisfy the same zero-mean reference condition, we have $\int_\Omega\delta\vartheta_\eps\,\mathrm d\bx=0$. The Poincar\'e--Wirtinger inequality on $\Omega$ therefore applies with a constant uniform in $\eps$. Moreover, \eqref{eq:volume-holes} gives
\[
 \norm{\mathbf 1_{\He}\nabla\tf}{L^{2}}
 \le \abs{\He}^{1/2}\norm{\nabla\tf}{L^{\infty}}
 \le C\eps^{3}.
\]
Thus \eqref{eq:thermal-H1} holds.

For the viscosity term, \Cref{lem:neumann-linf-general} is applied to the difference equation. Take
\[
 A=\ke\bI,
 \qquad h=\delta\bu_\eps\cdot\bg,
 \qquad \boldsymbol H=(\ke-\kf)\nabla\tf,
 \qquad w=\delta\vartheta_\eps.
\]
By \Cref{lem:uniform} and \Cref{ass:strong}, the data in \eqref{eq:thermal-difference} satisfy $\delta\bu_\eps\in L^\infty(0,T;L^2(\Omega))$. Moreover,
\[
 \norm{\boldsymbol H}{L^6(\Omega)}
 \le |\ks-\kf|\,|\He|^{1/6}
 \norm{\nabla\tf}{L^\infty(\Omega)}
 \le C\eps,
\]
and $\norm{h}{L^2(\Omega)}\le C\norm{\delta\bu_\eps}{L^2(\Omega)}$. At every time for which \eqref{eq:thermal-H1} holds, the estimate \eqref{eq:neumann-Linf} applies with $\|w\|_{L^2}\le C(\|\delta\bu_\eps\|_{L^2}+\eps^3)$. Combining these estimates gives \eqref{eq:thermal-Linf}. The Lipschitz continuity of $\mu$ then yields
\[
 \|\mu(\te)-\mu(\tf)\|_{L^\infty(\Omega)}
 \le \|\mu'\|_{L^\infty(\R)}
 \|\te-\tf\|_{L^\infty(\Omega)}.
\]
Together with \eqref{eq:thermal-Linf}, this inequality proves \eqref{eq:mu-difference}.
\end{proof}

The estimates in \Crefrange{lem:uniform}{lem:thermal} provide the fixed-domain thermal control required in the relative-energy argument. In addition, \eqref{eq:restriction-estimates} gives
\begin{equation}\label{eq:velocity-comparison}
 \norm{\widetilde{\ue}-\uf}{L^{2}(\Omega)}
 \le \norm{\boldsymbol{e}_{\eps}}{L^{2}(\Oe)}
 +C\eps\norm{\uf}{W^{2,\infty}(\Omega)}.
\end{equation}
Combining \eqref{eq:mu-difference}--\eqref{eq:velocity-comparison},
\begin{equation}\label{eq:mu-error-e}
 \norm{\mu(\te)-\mu(\tf)}{L^{\infty}(\Omega)}
 \le C\bigl(\norm{\boldsymbol{e}_{\eps}}{L^{2}(\Oe)}+\eps\bigr)
 \qquad\text{for a.e. }t\in(0,T).
\end{equation}
Thus \eqref{eq:mu-error-e} reduces the viscosity mismatch to the relative velocity.

\section{Relative energy and proof of the main theorem}\label{sec:relative}

With the thermal mismatch controlled, we turn to the transported densities on the fixed domain. Since both velocities are divergence-free, their difference satisfies the following stability estimate.

\begin{lemma}[Density stability]\label{lem:density}
Under the hypotheses of \Cref{thm:main}, for a.e. $\tau\in(0,T)$,
\begin{equation}\label{eq:density-ineq}
 \norm{q_{\eps}(\tau)}{L^{2}(\Omega)}^{2}
 \le \norm{q_{\eps}(0)}{L^{2}(\Omega)}^{2}
 +C\int_{0}^{\tau}
 \left(
 \norm{q_{\eps}}{L^{2}(\Omega)}^{2}
 +\norm{\boldsymbol{e}_{\eps}}{L^{2}(\Oe)}^{2}
 +\eps^{2}
 \right)\,\mathrm dt.
\end{equation}
\end{lemma}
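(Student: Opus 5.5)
The plan is the standard $L^2$ stability estimate for transport by divergence-free fields on the fixed domain $\Omega$. We work with the extended density $\widehat{\rhoe}$, transported by the zero extension $\widetilde{\ue}$, and with $\rhof$, transported by $\uf$. The difference $q_\eps=\widehat{\rhoe}-\rhof$ then formally satisfies
\[
 \partial_t q_\eps+\widetilde{\ue}\cdot\nabla q_\eps=-(\widetilde{\ue}-\uf)\cdot\nabla\rhof .
\]
Multiplying by $q_\eps$ and integrating, the transport term $\int\widetilde{\ue}\cdot\nabla\tfrac12 q_\eps^2$ vanishes because $\diver\widetilde{\ue}=0$ in $\mathcal D'(\Omega)$ and the normal trace is zero. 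This leaves
\[
 \tfrac12\tfrac{\mathrm d}{\mathrm dt}\|q_\eps\|_{L^2}^2=-\int_\Omega q_\eps\,\delta\bu_\eps\cdot\nabla\rhof\,\mathrm d\bx
 \le \|\nabla\rhof\|_{L^\infty}\|q_\eps\|_{L^2}\|\delta\bu_\eps\|_{L^2}.
\]
Young's inequality and \eqref{eq:velocity-comparison}, which gives $\|\delta\bu_\eps\|_{L^2}\le\|\boldsymbol e_\eps\|_{L^2(\Oe)}+C\eps$, then yield the integrand $C(\|q_\eps\|^2+\|\boldsymbol e_\eps\|^2+\eps^2)$. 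Integrating in time gives \eqref{eq:density-ineq}.

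The computation above is formal, since $\widehat{\rhoe}$ is only a renormalized weak solution. We make it rigorous by expanding
\[
 \|q_\eps\|^2=\int b(\widehat{\rhoe})-2\int\widehat{\rhoe}\rhof+\int\rhof^2,\qquad b(s)=s^2,
\]
and handling the three terms separately.
\begin{enumerate}
 \item For $\int b(\widehat{\rhoe})$, we use \eqref{eq:renormalized-continuity}. Taking $\psi=\psi(t)$ a time cutoff, which is admissible up to the boundary of $\Omega$ once the test class is extended to $C^1([0,T)\times\overline\Omega)$ using the zero normal trace of $\widetilde{\ue}$, shows that $\int\widehat{\rhoe}^2(\tau)=\int\widehat{\rhoe}^2(0)$ for a.e. $\tau$.
 \item For the cross term, we test the weak continuity equation for $\widehat{\rhoe}$ (the case $b(s)=s$) with $\psi=\rhof\cdot(\text{time cutoff})$. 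This is legitimate because $\rhof\in W^{1,\infty}$. We then use the strong equation $\partial_t\rhof=-\uf\cdot\nabla\rhof$, and the result is
\[
 \frac{\mathrm d}{\mathrm dt}\int\widehat{\rhoe}\rhof=\int\widehat{\rhoe}(\widetilde{\ue}-\uf)\cdot\nabla\rhof .
\]
 \item For $\int\rhof^2$, conservation holds because $\diver\uf=0$ and $\uf=0$ on $\partial\Omega$.
\end{enumerate}

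Combining the three pieces, we obtain
\[
 \|q_\eps(\tau)\|^2-\|q_\eps(0)\|^2=-2\int_0^\tau\!\!\int\widehat{\rhoe}\,\delta\bu_\eps\cdot\nabla\rhof .
\]
To return to the form with $q_\eps$, we write $\widehat{\rhoe}=q_\eps+\rhof$. The extra term is
\[
 \int\rhof\,\delta\bu_\eps\cdot\nabla\rhof=\tfrac12\int\delta\bu_\eps\cdot\nabla(\rhof^2),
\]
which vanishes because $\delta\bu_\eps$ is divergence-free on $\Omega$ with zero normal trace. From here we conclude exactly as in the formal computation. The time cutoffs are removed by letting them converge to $\mathbf 1_{[0,\tau]}$ at Lebesgue points $\tau$, using the weak continuity of the transported quantities.

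The main obstacle is justifying the first step: the renormalized identity is stated only for test functions compactly supported in $\Omega$, while we need $\psi$ independent of $\bx$. I would first try approximating $1$ by cutoffs $\chi_\delta\in C_c^\infty(\Omega)$ with $\chi_\delta\to1$. The resulting error is
\[
 \int b(\widehat{\rhoe})\,\widetilde{\ue}\cdot\nabla\chi_\delta ,
\]
which is bounded by $C\|\widetilde{\ue}/\operatorname{dist}(\cdot,\partial\Omega)\|_{L^1(\{\operatorname{dist}<\delta\})}$. This tends to $0$ by Hardy's inequality for $\widetilde{\ue}\in L^2(0,T;H^1_0)$. The same device handles the cross term with $\psi=\rhof\chi_\delta$.
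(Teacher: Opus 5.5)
Your proof is correct, but it follows a different route from the paper's.

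\textbf{The paper's route.} The paper subtracts the two transport equations to get $\partial_t q_\eps+\widetilde{\ue}\cdot\nabla q_\eps=-(\widetilde{\ue}-\uf)\cdot\nabla\rhof$. It uses the same cutoff/Hardy device to extend this equation by zero to $\R^3$. It then applies the DiPerna--Lions commutator lemma directly to $q_\eps$ with $\beta(s)=s^2/2$. This gives the local identity $\partial_t(\tfrac12|q_\eps|^2)+\diver(\tfrac12|q_\eps|^2\widetilde{\ue})=-q_\eps(\widetilde{\ue}-\uf)\cdot\nabla\rhof$, which is integrated over $(0,\tau)\times\Omega$.

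\textbf{Your route.} You use the classical weak--strong expansion of the square, which avoids any commutator estimate.
\begin{itemize}
\item The renormalized identity \eqref{eq:renormalized-continuity} with $b(s)=s^2$, which is already part of the definition of a weak solution, gives conservation of $\|\widehat{\rhoe}\|_{L^2}^2$.
\item The linear weak equation tested with the Lipschitz field $\rhof$ handles the cross term.
\item $\|\rhof\|_{L^2}^2$ is conserved by the regular flow.
\item The leftover term $\int_\Omega\rhof\,\delta\bu_\eps\cdot\nabla\rhof\,\mathrm d\bx=\tfrac12\int_\Omega\delta\bu_\eps\cdot\nabla(\rhof^2)\,\mathrm d\bx$ vanishes because $\delta\bu_\eps\in H_0^1(\Omega)$ is solenoidal.
\end{itemize}
This lands on exactly the paper's identity, $\|q_\eps(\tau)\|_{L^2}^2-\|q_\eps(0)\|_{L^2}^2=-2\int_0^\tau\!\int_\Omega q_\eps\,\delta\bu_\eps\cdot\nabla\rhof\,\mathrm d\bx\,\mathrm dt$. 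From there the Young and \eqref{eq:velocity-comparison} step is the same in both arguments.

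\textbf{What each route buys.} Your version needs neither a commutator lemma nor a whole-space extension of the difference equation. The only renormalization it uses is the global one for $\widehat{\rhoe}$ supplied by the definition. The Hardy cutoff is needed only for the test functions $\eta(t)\chi_\delta$ and $\eta(t)\chi_\delta\rhof$. The paper's version derives a local-in-space identity for $|q_\eps|^2$ from the linear transport equation alone, so it re-proves renormalization for the difference rather than assuming it. It would extend verbatim to other renormalizations $\beta(q_\eps)$.

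\textbf{Minor details.}
\begin{itemize}
\item $\rhof\eta\chi_\delta$ is Lipschitz rather than $C^1$. It is admitted into the test class by a routine mollification and dominated convergence in the linear weak form.
\item In step~1, your parenthetical appeal to the zero normal trace is not by itself a justification. The Hardy cutoff argument you give at the end is the right one, and it coincides with the paper's device.
\end{itemize}
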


\begin{proof}
Both densities satisfy transport equations on the fixed domain. Subtracting them and using $\diver\widetilde{\ue}=\diver\uf=0$ gives
\begin{equation*}
 \partial_tq_{\eps}+\widetilde{\ue}\cdot\nabla q_{\eps}
 =f_{\eps},
 \qquad
 f_{\eps}:=-(\widetilde{\ue}-\uf)\cdot\nabla\rhof.
\end{equation*}
The bounds in \Cref{lem:uniform} and \Cref{ass:strong} imply
\[
 q_{\eps}\in L^\infty((0,T)\times\Omega),
 \qquad
 f_{\eps}\in L^1(0,T;L^2(\Omega)),
 \qquad
 \widetilde{\ue}\in L^1(0,T;W_0^{1,2}(\Omega;\R^3)).
\]
Moreover, $q_{\eps}$ takes values in the bounded interval $[-(\rhomax-\rhomin),\,\rhomax-\rhomin]$. To justify passage across the boundary, write $d(\bx)=\operatorname{dist}(\bx,\partial\Omega)$ and choose smooth cutoffs $\chi_h\in C_c^\infty(\Omega)$ such that
\[
 \chi_h=0\ \text{on }\{d<h\},\qquad
 \chi_h=1\ \text{on }\{d>2h\},\qquad
 |\nabla\chi_h|\le C h^{-1}.
\]
For any $\varphi\in C_c^\infty([0,T)\times\R^3)$, the restriction of $\chi_h\varphi$ to $[0,T)\times\Omega$ is an admissible test function. Hardy's inequality and the boundary-layer volume bound give
\begin{align*}
 \left|\int_0^T\!\int_\Omega q_\eps\widetilde{\ue}\cdot\nabla\chi_h\,\varphi \,\mathrm d\bx\,\mathrm dt\right|
 &\le C\|q_\eps\|_{L^\infty}\|\varphi\|_{L^\infty}
 |\{d<2h\}|^{1/2}\left\|\frac{\widetilde{\ue}}{d}\right\|_{L^1(0,T;L^2(\Omega))}\\
 &\le C h^{1/2}\|q_\eps\|_{L^\infty}\|\varphi\|_{L^\infty}
 \|\nabla\widetilde{\ue}\|_{L^1(0,T;L^2(\Omega))}\to0.
\end{align*}
Let $\overline q_\eps$, $\overline f_\eps$ and $\overline{\boldsymbol u}_\eps$ denote the zero extensions of $q_\eps$, $f_\eps$ and $\widetilde{\ue}$ from $\Omega$ to $\R^3$. Testing the difference equation with $\chi_h\varphi$, including the initial term, and passing $h\to0^+$ yields
\[
 \partial_t\overline q_\eps
 +\diver\!\bigl(\overline q_\eps\,\overline{\boldsymbol u}_\eps\bigr)
 =\overline f_\eps
 \quad\text{in }\mathcal D'((0,T)\times\R^3),
 \qquad
 \overline q_\eps|_{t=0}=\overline q_\eps(0).
\]
Moreover,
\[
 \overline{\boldsymbol u}_\eps\in L^1(0,T;W^{1,2}(\R^3;\R^3)),
 \qquad
 \diver\overline{\boldsymbol u}_\eps=0\quad\text{in }\mathcal D'((0,T)\times\R^3),
\]
so the preceding equation is equivalently
\[
 \partial_t\overline q_\eps
 +\overline{\boldsymbol u}_\eps\cdot\nabla\overline q_\eps
 =\overline f_\eps
 \quad\text{in }\mathcal D'((0,T)\times\R^3).
\]
The DiPerna--Lions commutator lemma \cite[Lemma~II.1]{DiPernaLions} therefore applies with a smooth function agreeing with $\beta(s)=s^2/2$ on the range of $q_\eps$ and satisfying $\beta(0)=0$. Restricting the resulting identity to $\Omega$ gives
\begin{equation}\label{eq:density-renormalized}
 \partial_t\!\left(\frac{|q_{\eps}|^2}{2}\right)
 +\diver\!\left(
   \frac{|q_{\eps}|^2}{2}\widetilde{\ue}
 \right)
 =q_{\eps}f_{\eps}
 =-q_{\eps}(\widetilde{\ue}-\uf)\cdot\nabla\rhof.
\end{equation}
The zero trace of $\widetilde{\ue}$ makes the boundary flux vanish. Integrating \eqref{eq:density-renormalized} over $(0,\tau)\times\Omega$ therefore gives
\[
 \frac12\|q_{\eps}(\tau)\|_{L^2(\Omega)}^2
 -\frac12\|q_{\eps}(0)\|_{L^2(\Omega)}^2
 =
 -\int_0^\tau\!\int_\Omega
 q_{\eps}(\widetilde{\ue}-\uf)\cdot\nabla\rhof
 \,\mathrm d\bx\,\mathrm dt.
\]

Set $M_\rho:=\|\nabla\rhof\|_{L^\infty((0,T)\times\Omega)}$. Then
\begin{align*}
 \left|
 \int_0^\tau\!\int_\Omega
 q_{\eps}(\widetilde{\ue}-\uf)\cdot\nabla\rhof
 \,\mathrm d\bx\,\mathrm dt
 \right|
 &\le
 M_\rho\int_0^\tau
 \|q_{\eps}\|_{L^2(\Omega)}
 \|\widetilde{\ue}-\uf\|_{L^2(\Omega)}
 \,\mathrm dt\\
 &\le
 C\int_0^\tau
 \|q_{\eps}\|_{L^2(\Omega)}
 \bigl(
   \|\boldsymbol e_{\eps}\|_{L^2(\Oe)}+\eps
 \bigr)\,\mathrm dt,
\end{align*}
where the last step uses \eqref{eq:velocity-comparison} and the $W^{2,\infty}$ bound for $\uf$ from \Cref{ass:strong}. Applying Young's inequality, we obtain
\[
 \left|
 \int_0^\tau\!\int_\Omega
 q_{\eps}(\widetilde{\ue}-\uf)\cdot\nabla\rhof
 \,\mathrm d\bx\,\mathrm dt
 \right|
 \le
 C\int_0^\tau
 \left(
  \|q_{\eps}\|_{L^2(\Omega)}^2
  +\|\boldsymbol e_{\eps}\|_{L^2(\Oe)}^2
  +\eps^2
 \right)\,\mathrm dt.
\]
Combining this estimate with the preceding identity proves \eqref{eq:density-ineq}.
\end{proof}

The kinetic error is treated with a relative-energy inequality for a general solenoidal comparison field.

\begin{lemma}[Relative kinetic-energy inequality]\label{lem:relative-general}
Let $(\rhoe,\ue,\te)$ be a weak solution under the hypotheses of \Cref{thm:main}. Let $\bU$ be solenoidal and satisfy
\[
 \bU\in W^{1,\infty}(0,T;H_{0}^{1}(\Oe;\R^{3})).
\]
Then, for a.e. $\tau\in(0,T)$, there holds
\begin{equation}\label{eq:general-relative}
\begin{aligned}
 &\frac12\int_{\Oe}\rhoe(\tau)\abs{\ue(\tau)-\bU(\tau)}^{2}\,\mathrm d\bx
 +\int_{0}^{\tau}\!\int_{\Oe}
 2\mu(\te)\abs{\cD(\ue-\bU)}^{2}\,\mathrm d\bx\,\mathrm dt\\
 &\quad\le
 \frac12\int_{\Oe}\rhozeroe\abs{\uzeroe-\bU(0)}^{2}\,\mathrm d\bx-\int_{0}^{\tau}\!\int_{\Oe}
 \rhoe\bigl(\partial_t\bU+\ue\cdot\nabla\bU\bigr)
 \cdot(\ue-\bU)\,\mathrm d\bx\,\mathrm dt\\
 &\qquad
 +\int_{0}^{\tau}\!\int_{\Oe}
 (\rhoe\bfv-\te\bg)\cdot(\ue-\bU)\,\mathrm d\bx\,\mathrm dt -\int_{0}^{\tau}\!\int_{\Oe}
 2\mu(\te)\cD(\bU):\cD(\ue-\bU)\,\mathrm d\bx\,\mathrm dt.
\end{aligned}
\end{equation}
\end{lemma}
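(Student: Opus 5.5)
The plan is the standard relative-energy expansion. We write
\[
 \tfrac12\int\rhoe|\ue-\bU|^2=\tfrac12\int\rhoe|\ue|^2-\int\rhoe\ue\cdot\bU+\tfrac12\int\rhoe|\bU|^2 ,
\]
and treat the three pieces separately. For the first piece we use the kinetic-energy inequality \eqref{eq:kinetic-energy}. The cross term is obtained by testing the weak momentum equation \eqref{eq:weak-momentum} with $\bphi=\bU$. The last term is obtained by testing the renormalized continuity equation \eqref{eq:renormalized-continuity} (with $b(s)=s$) against $\tfrac12|\bU|^2$. Adding the three identities and regrouping the dissipation as $2\mu(\te)|\cD(\ue)|^2-4\mu(\te)\cD(\ue):\cD(\bU)+2\mu(\te)|\cD(\bU)|^2=2\mu(\te)|\cD(\ue-\bU)|^2$, together with the remaining $-2\mu(\te)\cD(\bU):\cD(\ue-\bU)$, should produce \eqref{eq:general-relative}. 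In this regrouping the convective terms $\rhoe\ue\otimes\ue:\nabla\bU$ and $\rhoe\ue\cdot\nabla\tfrac12|\bU|^2$ combine into $-\rhoe(\ue\cdot\nabla\bU)\cdot(\ue-\bU)$. The time derivatives $\rhoe\ue\cdot\partial_t\bU$ and $\rhoe\bU\cdot\partial_t\bU$ combine into $-\rhoe\partial_t\bU\cdot(\ue-\bU)$. The forcing terms pair with $\ue-\bU$.

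The main obstacle is justification, because the test functions are not admissible as they stand. \eqref{eq:weak-momentum} requires $\bphi\in C_c^1([0,T)\times\Oe)$ solenoidal, whereas $\bU$ is only $W^{1,\infty}(0,T;H_{0,\sigma}^1(\Oe))$, and we need pointwise-in-time identities at a.e. $\tau$. The first step is a spatial density argument. Since $\Oe$ is a bounded Lipschitz (indeed $C^{2,\beta}$) domain at fixed $\eps$, smooth compactly supported solenoidal fields are dense in $H_{0,\sigma}^1(\Oe)$. A mollification in time of a $W^{1,\infty}(0,T;H^1_{0,\sigma})$ field, composed with this spatial density, gives $\bphi_n\to\bU$ in $W^{1,2}(0,T;H^1_0)$ with the $\bphi_n$ smooth and solenoidal.

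Each term of the weak formulation then passes to the limit. For the convective term we use $\rhoe\ue\otimes\ue\in L^1(0,T;L^{3/2})$, which follows from $\ue\in L^\infty L^2\cap L^2L^6$; against $\nabla\bU\in L^\infty L^2$ this needs care, since $|\ue|^2\in L^1L^3$ gives integrability of $|\ue|^2|\nabla\bU|$ in $L^1L^{6/5}$. This is enough because $\nabla\bphi_n\to\nabla\bU$ in $L^\infty L^2$, the time regularity being Lipschitz. The viscous term converges because $\mu$ is bounded. The next step is to localize in time. We replace $\bphi$ by $\psi_\delta(t)\bphi$ with $\psi_\delta$ a Lipschitz cutoff equal to $1$ on $[0,\tau]$ and decreasing linearly on $[\tau,\tau+\delta]$, and let $\delta\to0$ at Lebesgue points, using $\ue\in C_{\mathrm{weak}}([0,T];L^2)$ and $\rhoe\ue$ weakly continuous. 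This yields the cross-term identity at every Lebesgue point $\tau$.

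For the density identity the same cutoff works. Here we note that $\rhoe\in C([0,T];L^p)$ for the constructed solutions, while for a general renormalized solution the weak continuity in time suffices. The test function $\tfrac12|\bU|^2\in W^{1,\infty}(0,T;W^{1,3/2}_0)$ is approximated similarly, and the transport term $\rhoe\ue\cdot\nabla|\bU|^2/2$ is integrable by the same bounds. Finally we intersect the full-measure sets of $\tau$ where \eqref{eq:kinetic-energy} holds with the Lebesgue-point sets of the two identities. The kinetic inequality is used at that $\tau$ exactly as stated, and summing gives \eqref{eq:general-relative}, which is an inequality because \eqref{eq:kinetic-energy} is one.
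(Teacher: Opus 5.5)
Your proposal is correct and follows essentially the same route as the paper. The paper likewise approximates $\bU$ by smooth solenoidal fields (spatial density in $H_{0,\sigma}^1(\Oe)$ plus time mollification) and passes to the limit term by term in \eqref{eq:weak-momentum}, using time cutoffs that converge to $\mathbf 1_{[0,\tau]}$. It then tests the renormalized continuity equation with $|\bU|^2/2$, adds this to \eqref{eq:kinetic-energy}, subtracts the momentum identity, and regroups the viscous terms exactly as you do; the convective pairing it uses, $\|\ue\|_{L^2L^6}^2$ against $\|\nabla(\bU_n-\bU)\|_{L^\infty L^{3/2}}$, is the same integrability you reach once you correct $L^1L^{3/2}$ to $L^1L^3$.
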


\begin{proof}
Fix $\eps>0$. The density result \cite[Theorem~III.4.2]{Galdi} in $H_{0,\sigma}^1(\Oe)$, followed by time mollification, gives smooth solenoidal approximations $\bU_n$ such that
\[
 \bU_n\to\bU\quad\text{in }C([0,T];H_0^1(\Oe;\R^3)),
 \qquad
 \partial_t\bU_n\to\partial_t\bU
 \quad\text{in }L^1(0,T;H_0^1(\Oe;\R^3)).
\]
Choosing $\boldsymbol\Psi_n=\bU_n-\bU$, we have
\begin{align*}
 \left|\int_0^T\!\int_{\Oe}
 \rhoe\ue\otimes\ue:\nabla\boldsymbol\Psi_n\,\mathrm d\bx\,\mathrm dt\right|
 &\le\rhomax\|\ue\|_{L^2L^6}^2
              \|\nabla\boldsymbol\Psi_n\|_{L^\infty L^{3/2}}\to0,\\
 \left|\int_0^T\!\int_{\Oe}
 \rhoe\ue\cdot\partial_t\boldsymbol\Psi_n\,\mathrm d\bx\,\mathrm dt\right|
 &\le\rhomax\|\ue\|_{L^\infty L^2}
              \|\partial_t\boldsymbol\Psi_n\|_{L^1L^2}\to0.
\end{align*}
For the viscous, forcing and initial terms, we obtain
\[
 \left|\int_0^T\!\int_{\Oe}
 2\mu(\te)\cD(\ue):\cD(\boldsymbol\Psi_n)\,\mathrm d\bx\,\mathrm dt\right|
 \le\! 2\mumax\|\cD(\ue)\|_{L^2(0,T;L^2(\Oe))}
       \|\cD(\boldsymbol\Psi_n)\|_{L^2(0,T;L^2(\Oe))}\!\to0,
\]
\[
 \left|\int_0^T\!\int_{\Oe}
 (\rhoe\bfv-\te\bg)\cdot\boldsymbol\Psi_n\,\mathrm d\bx\,\mathrm dt\right|
\le \|\rhoe\bfv-\te\bg\|_{L^1(0,T;L^2(\Oe))}
       \|\boldsymbol\Psi_n\|_{L^\infty(0,T;L^2(\Oe))}\to0,
\]
and
\[
 \left|\int_{\Oe}\rhozeroe\uzeroe\cdot\boldsymbol\Psi_n(0)\,\mathrm d\bx\right|
 \le \rhomax\|\uzeroe\|_{L^2(\Oe)}\|\boldsymbol\Psi_n(0)\|_{L^2(\Oe)}\to0.
\]
To retain the initial-data term, we apply \eqref{eq:weak-momentum} with time cutoffs that equal one at $t=0$ and converge to $\mathbf1_{[0,\tau]}$. Hence, for a.e. $\tau\in(0,T)$,
\[
\begin{aligned}
 &\int_{\Oe}\rhoe(\tau)\ue(\tau)\cdot\bU(\tau)\,\mathrm d\bx
 -\int_{\Oe}\rhozeroe\uzeroe\cdot\bU(0)\,\mathrm d\bx\\
 &\quad=\int_0^\tau\!\int_{\Oe}
 \bigl[\rhoe\ue\cdot\partial_t\bU
 +\rhoe\ue\otimes\ue:\nabla\bU\bigr]\,\mathrm d\bx\,\mathrm dt\\
 &\qquad+\int_0^\tau\!\int_{\Oe}
 \bigl[-2\mu(\te)\cD(\ue):\cD(\bU)
 +(\rhoe\bfv-\te\bg)\cdot\bU\bigr]\,\mathrm d\bx\,\mathrm dt.
\end{aligned}
\]

The zero extension of $\bU$ belongs to $H_0^1(\Omega)$. Moreover, $|\bU|^2/2\in L^\infty(0,T;W_0^{1,3/2}(\Omega))$ and $\widetilde{\ue}\in L^2(0,T;L^6(\Omega))$. Since $\bU,\partial_t\bU\in L^\infty(0,T;H_0^1(\Oe))\hookrightarrow L^\infty(0,T;L^6(\Oe))$, one also has $\partial_t(|\bU|^2/2)=\bU\cdot\partial_t\bU\in L^\infty(0,T;L^3(\Oe))$. By density, the renormalized continuity identity therefore admits the test function $|\bU|^2/2$, yielding
\[
\begin{aligned}
 \frac12\int_{\Oe}\rhoe(\tau)|\bU(\tau)|^2\,\mathrm d\bx
 =\frac12\int_{\Oe}\rhozeroe|\bU(0)|^2\,\mathrm d\bx
 +\int_0^\tau\!\int_{\Oe}
 \left[\rhoe\bU\cdot\partial_t\bU
 +\rhoe\ue\cdot\nabla\!\left(\frac{|\bU|^2}{2}\right)\right]
 \,\mathrm d\bx\,\mathrm dt.
\end{aligned}
\]
Adding this identity to \eqref{eq:kinetic-energy} and subtracting the momentum identity gives the kinetic part of \eqref{eq:general-relative}. Using the identity
\[
 2\mu|\cD(\ue)|^2-2\mu\cD(\ue):\cD(\bU)
 =2\mu|\cD(\ue-\bU)|^2
  +2\mu\cD(\bU):\cD(\ue-\bU),
\]
we reorganize the remaining viscous terms and obtain \eqref{eq:general-relative}. The same time-cutoff construction retains the initial terms in both testing identities. Together with \eqref{eq:kinetic-energy}, this completes the endpoint argument.
\end{proof}

By \Cref{prop:restriction} and \Cref{ass:strong}, the field $\bU_\eps=\cR_\eps\uf$ is solenoidal and belongs to $W^{1,\infty}(0,T;H_0^1(\Oe;\R^3))$. Accordingly, \Cref{lem:relative-general} applies with $\bU=\bU_\eps$. Set
\[
 \bdelta_{\eps}=\bU_{\eps}-\uf,
 \qquad
 \boldsymbol{A}=\partial_t\uf+\uf\cdot\nabla\uf.
\]
Then
\begin{equation}\label{eq:acceleration-decomp}
 \partial_t\bU_{\eps}+\bU_{\eps}\cdot\nabla\bU_{\eps}
 =\boldsymbol{A}+\boldsymbol{G}_{\eps},
\end{equation}
where
\[
 \boldsymbol{G}_{\eps}
 =\partial_t\bdelta_{\eps}
 +\bdelta_{\eps}\cdot\nabla\uf
 +\uf\cdot\nabla\bdelta_{\eps}
 +\bdelta_{\eps}\cdot\nabla\bdelta_{\eps}.
\]
The restriction estimates control each component of $\boldsymbol G_\eps$.

\begin{lemma}[Acceleration corrector]\label{lem:acceleration}
Under the hypotheses of \Cref{thm:main}, for $\zeta>0$ there exists $C_{\zeta}>0$ such that, for a.e. $t\in(0,T)$,
\begin{equation}\label{eq:acceleration-bound}
 \left|
 \int_{\Oe}\rhoe\boldsymbol{G}_{\eps}\cdot\boldsymbol{e}_{\eps}\,\mathrm d\bx
 \right|
 \le
 \zeta\norm{\cD(\boldsymbol{e}_{\eps})}{L^{2}(\Oe)}^{2}
 +C_{\zeta}\left(
 \norm{\boldsymbol{e}_{\eps}}{L^{2}(\Oe)}^{2}+\eps^{2}
 \right).
\end{equation}
In addition,
\begin{equation}\label{eq:quadratic-convection}
 \left|
 \int_{\Oe}\rhoe
 (\boldsymbol{e}_{\eps}\cdot\nabla\bU_{\eps})
 \cdot\boldsymbol{e}_{\eps}\,\mathrm d\bx
 \right|
 \le
 \zeta\norm{\cD(\boldsymbol{e}_{\eps})}{L^{2}(\Oe)}^{2}
 +C_{\zeta}\norm{\boldsymbol{e}_{\eps}}{L^{2}(\Oe)}^{2}.
\end{equation}
\end{lemma}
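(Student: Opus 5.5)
The plan is to expand $\boldsymbol G_\eps$ into its four pieces and estimate each pairing against $\rhoe\boldsymbol e_\eps$ separately. Three facts do the work, all uniform in $\eps$.

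First, $\rhoe\le\rhomax$.

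Second, the restriction bounds \eqref{eq:restriction-estimates} apply to $\bdelta_\eps=\cR_\eps\uf-\uf$ (restricted to $\Oe$). With constants depending on $\norm{\uf}{W^{1,\infty}(0,T;W^{2,\infty})}$ from \Cref{ass:strong}, they give
\[
\norm{\bdelta_\eps}{L^2(\Oe)}+\norm{\bdelta_\eps}{L^6(\Oe)}+\norm{\nabla\bdelta_\eps}{L^{6/5}(\Oe)}\le C\eps,
\qquad \norm{\nabla\bdelta_\eps}{L^2(\Oe)}\le C .
\]

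Third, $\boldsymbol e_\eps(t)\in H_0^1(\Oe)$ for a.e. $t$. By Korn \eqref{eq:korn} applied to the zero extension and the fixed-domain Sobolev embedding, $\norm{\boldsymbol e_\eps}{L^6(\Oe)}\le C\norm{\cD(\boldsymbol e_\eps)}{L^2(\Oe)}$. Interpolation then gives $\norm{\boldsymbol e_\eps}{L^3}\le \norm{\boldsymbol e_\eps}{L^2}^{1/2}\norm{\boldsymbol e_\eps}{L^6}^{1/2}$ and $\norm{\boldsymbol e_\eps}{L^4}^2\le \norm{\boldsymbol e_\eps}{L^2}^{1/2}\norm{\boldsymbol e_\eps}{L^6}^{3/2}$.

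\emph{The four pieces of \eqref{eq:acceleration-bound}.}

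For the time derivative, the commutation \eqref{eq:time-commutation} gives $\partial_t\bdelta_\eps=\cR_\eps(\partial_t\uf)-\partial_t\uf$. Since $\partial_t\uf\in L^\infty(0,T;\cX(\Omega))$, its $L^2$ norm is $O(\eps)$. Cauchy--Schwarz and Young then give $C\eps\norm{\boldsymbol e_\eps}{L^2}\le C(\eps^2+\norm{\boldsymbol e_\eps}{L^2}^2)$.

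The term $\bdelta_\eps\cdot\nabla\uf$ is handled the same way, using $\nabla\uf\in L^\infty$ and $\norm{\bdelta_\eps}{L^2}\le C\eps$.

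The transport term $\uf\cdot\nabla\bdelta_\eps$ is the one where the critical scaling matters. Here $\nabla\bdelta_\eps$ is only $O(1)$ in $L^2$, because the boundary layers carry order-one energy, so I would use the $L^{6/5}$--$L^6$ duality instead:
\[
\Bigl|\int_{\Oe}\rhoe(\uf\cdot\nabla\bdelta_\eps)\cdot\boldsymbol e_\eps\Bigr|
\le C\norm{\uf}{L^\infty}\norm{\nabla\bdelta_\eps}{L^{6/5}}\norm{\boldsymbol e_\eps}{L^6}
\le C\eps\norm{\cD(\boldsymbol e_\eps)}{L^2}
\le\zeta\norm{\cD(\boldsymbol e_\eps)}{L^2}^2+C_\zeta\eps^2 .
\]

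For the quadratic term, Hölder with exponents $(6,2,3)$ gives a bound by $\norm{\bdelta_\eps}{L^6}\norm{\nabla\bdelta_\eps}{L^2}\norm{\boldsymbol e_\eps}{L^3}$. This is at most $C\eps\norm{\boldsymbol e_\eps}{L^2}^{1/2}\norm{\cD(\boldsymbol e_\eps)}{L^2}^{1/2}$. Using $\sqrt{ab}\le a+b$ and Young, it is bounded by $\zeta\norm{\cD(\boldsymbol e_\eps)}{L^2}^2+C_\zeta(\eps^2+\norm{\boldsymbol e_\eps}{L^2}^2)$.

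Summing the four contributions and renaming $\zeta$ yields \eqref{eq:acceleration-bound}. At no step is $\rhoe$ differentiated; only the bound $\rhoe\le\rhomax$ is used.

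\emph{The convection term \eqref{eq:quadratic-convection}.} Split $\nabla\bU_\eps=\nabla\uf+\nabla\bdelta_\eps$. The first part is bounded directly by $\rhomax\norm{\nabla\uf}{L^\infty}\norm{\boldsymbol e_\eps}{L^2}^2$. For the second, only the uniform bound $\norm{\nabla\bdelta_\eps}{L^2}\le C$ is available, with no smallness, so the estimate must come from integrability. Hölder with exponents $(4,2,4)$ and the $L^4$ interpolation give
\[
C\norm{\boldsymbol e_\eps}{L^4}^2
\le C\norm{\boldsymbol e_\eps}{L^2}^{1/2}\norm{\cD(\boldsymbol e_\eps)}{L^2}^{3/2}.
\]
Young with exponents $(4/3,4)$ then gives $\zeta\norm{\cD(\boldsymbol e_\eps)}{L^2}^2+C_\zeta\norm{\boldsymbol e_\eps}{L^2}^2$, with no $\eps^2$ remainder.

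I expect the main obstacle to be organizing the estimates so that no term requires $\nabla\bdelta_\eps$ to be small in $L^2$, which is false at the critical scale. The smallness must come either from the $L^{6/5}$ corrector-gradient bound or from the $O(\eps)$ bounds on $\bdelta_\eps$ itself. The point I would check most carefully is that $\norm{\nabla\bdelta_\eps}{L^{6/5}}=O(\eps)$ holds uniformly in time, including for $\partial_t\uf$. This follows from \eqref{eq:restriction-estimates} together with \Cref{ass:strong}.
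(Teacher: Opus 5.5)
Your proposal is correct and follows essentially the same route as the paper: the same four-term split of $\boldsymbol G_\eps$, the $L^{6/5}$--$L^6$ pairing (with Korn) for $\uf\cdot\nabla\bdelta_\eps$, the $(6,2,3)$ H\"older bound with $L^3$ interpolation for $\bdelta_\eps\cdot\nabla\bdelta_\eps$, and the $L^4$ interpolation followed by Young with exponents $(4/3,4)$ for the convection term. The only differences are cosmetic. You split $\nabla\bU_\eps=\nabla\uf+\nabla\bdelta_\eps$, whereas the paper uses the uniform bound $\norm{\nabla\bU_\eps}{L^2(\Oe)}\le C$ directly, and you close the quadratic corrector term with $\sqrt{AB}\le A+B$ instead of a two-step Young inequality.
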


\begin{proof}
By \Cref{prop:restriction} and \Cref{ass:strong}, both $\uf(t)$ and $\partial_t\uf(t)$ belong to $\cX(\Omega)$ for a.e. $t\in(0,T)$: differentiation of the no-slip trace and of $\diver\uf=0$ gives the corresponding trace and divergence conditions for $\partial_t\uf$. Since $\partial_t\bdelta_\eps=\cR_\eps(\partial_t\uf)-\partial_t\uf$ by \eqref{eq:time-commutation}, \eqref{eq:restriction-estimates} applied to $\uf$ and $\partial_t\uf$ gives
\[
 \norm{\bdelta_{\eps}}{L^{6}(\Oe)}
 +\norm{\partial_t\bdelta_{\eps}}{L^{2}(\Oe)}
 +\norm{\nabla\bdelta_{\eps}}{L^{6/5}(\Oe)}
 \le C\eps,
 \qquad
 \norm{\nabla\bdelta_{\eps}}{L^{2}(\Oe)}\le C.
\]
Write
\[
 \boldsymbol G_\eps
 =\boldsymbol G_{1,\eps}+\boldsymbol G_{2,\eps}
 +\boldsymbol G_{3,\eps}+\boldsymbol G_{4,\eps},
\]
where
\[
 \boldsymbol G_{1,\eps}=\partial_t\bdelta_\eps,
 \quad
 \boldsymbol G_{2,\eps}=\bdelta_\eps\cdot\nabla\uf,
 \quad
 \boldsymbol G_{3,\eps}=\uf\cdot\nabla\bdelta_\eps,
 \quad
 \boldsymbol G_{4,\eps}=\bdelta_\eps\cdot\nabla\bdelta_\eps.
\]
The first two terms are of lower order and satisfy
\begin{align*}
 \left|\int_{\Oe}\rhoe\boldsymbol G_{1,\eps}\cdot\boldsymbol e_\eps\,\mathrm d\bx\right|
 \le C\eps\norm{\boldsymbol e_\eps}{L^2(\Oe)},
\quad \left|\int_{\Oe}\rhoe\boldsymbol G_{2,\eps}\cdot\boldsymbol e_\eps\,\mathrm d\bx\right|
 \le C\eps\norm{\boldsymbol e_\eps}{L^2(\Oe)}.
\end{align*}
For the third term, H\"older's inequality with exponents $(\infty,6/5,6)$ together with \eqref{eq:korn} gives
\[
 \left|\int_{\Oe}\rhoe\boldsymbol G_{3,\eps}\cdot\boldsymbol e_\eps\,\mathrm d\bx\right|
 \le C\eps\norm{\boldsymbol e_\eps}{L^6(\Oe)}
 \le C\eps\norm{\cD(\boldsymbol e_\eps)}{L^2(\Oe)}.
\]
For the quadratic term $\boldsymbol G_{4,\eps}$, we first note that
\[
 \norm{\boldsymbol G_{4,\eps}}{L^{3/2}(\Oe)}
 \le \norm{\bdelta_\eps}{L^6(\Oe)}
 \norm{\nabla\bdelta_\eps}{L^2(\Oe)}
 \le C\eps.
\]
Applying the three-dimensional Gagliardo--Nirenberg inequality to the zero extension and then using \eqref{eq:korn}, we obtain
\[
 \norm{\boldsymbol e_\eps}{L^3(\Oe)}
 \le C\norm{\boldsymbol e_\eps}{L^2(\Oe)}^{1/2}
 \norm{\cD(\boldsymbol e_\eps)}{L^2(\Oe)}^{1/2}.
\]
Hence
\[
 \left|\int_{\Oe}\rhoe\boldsymbol G_{4,\eps}\cdot\boldsymbol e_\eps\,\mathrm d\bx\right|
 \le C\eps
 \norm{\boldsymbol e_\eps}{L^2(\Oe)}^{1/2}
 \norm{\cD(\boldsymbol e_\eps)}{L^2(\Oe)}^{1/2}.
\]
Writing $A=\|\boldsymbol e_\eps\|_{L^2(\Oe)}$ and $B=\|\cD(\boldsymbol e_\eps)\|_{L^2(\Oe)}$, we apply Young's inequality with exponents $4$ and $4/3$ to obtain
\[
 C\eps A^{1/2}B^{1/2}
 \le\zeta B^2+C_\zeta\eps^{4/3}A^{2/3}
 \le\zeta B^2+C_\zeta(\eps^2+A^2).
\]
For the second inequality, apply Young's inequality to $x=A^{2/3}$ and $y=\eps^{4/3}$ with conjugate exponents $3$ and $3/2$, we have
\[
 \eps^{4/3}A^{2/3}=xy
 \le \eta x^3+C_\eta y^{3/2}
 =\eta A^2+C_\eta\eps^2.
\]
Hence the quadratic corrector contributes $\zeta B^2+C_\zeta(A^2+\eps^2)$. We assign $\zeta/2$ to each dissipative contribution from $\boldsymbol G_{3,\eps}$ and $\boldsymbol G_{4,\eps}$, while the two lower-order terms are bounded by $C(A^2+\eps^2)$. Combining the four estimates proves \eqref{eq:acceleration-bound}.

For \eqref{eq:quadratic-convection}, we use the uniform $H^{1}$ bound for $\bU_{\eps}$ and the three-dimensional Gagliardo--Nirenberg inequality to obtain
\[
 \int_{\Oe}\abs{\boldsymbol{e}_{\eps}}^{2}\abs{\nabla\bU_{\eps}}\,\mathrm d\bx
 \le C\norm{\boldsymbol{e}_{\eps}}{L^{4}(\Oe)}^{2}
 \le C\norm{\boldsymbol{e}_{\eps}}{L^{2}(\Oe)}^{1/2}
       \norm{\nabla\boldsymbol{e}_{\eps}}{L^{2}(\Oe)}^{3/2}.
\]
With $A=\|\boldsymbol e_\eps\|_{L^2(\Oe)}$ and $B=\|\cD(\boldsymbol e_\eps)\|_{L^2(\Oe)}$, we combine Korn's inequality with Young's inequality, using exponents $4/3$ and $4$, to obtain
\[
 C A^{1/2}B^{3/2}\le \zeta B^2+C_\zeta A^2.
\]
This proves \eqref{eq:quadratic-convection} and completes the argument.

\end{proof}

After substitution of the effective momentum equation, the geometric cross term is exactly the weighted residual controlled by \Cref{prop:restriction}.

\begin{proposition}[Closed relative kinetic-energy inequality]\label{prop:kinetic-closed}
Under the hypotheses of \Cref{thm:main}, for $0<\zeta<\mumin/2$ there exists $C_{\zeta}>0$ such that, for a.e. $\tau\in(0,T)$,
\begin{equation}\label{eq:closed-kinetic}
\begin{aligned}
 &\frac12\int_{\Oe}\rhoe(\tau)\abs{\boldsymbol{e}_{\eps}(\tau)}^{2}\,\mathrm d\bx
 +(2\mumin-4\zeta)
 \int_{0}^{\tau}\norm{\cD(\boldsymbol{e}_{\eps})}{L^{2}(\Oe)}^{2}\,\mathrm dt\\
 &\qquad\le
 \frac12\int_{\Oe}\rhozeroe
 \abs{\uzeroe-\cR_{\eps}\boldsymbol{u}_{0}}^{2}\,\mathrm d\bx
 +C_{\zeta}\int_{0}^{\tau}
 \left(
 \norm{q_{\eps}}{L^{2}(\Omega)}^{2}
 +\norm{\boldsymbol{e}_{\eps}}{L^{2}(\Oe)}^{2}
 \right)\,\mathrm dt
 +C_{\zeta}\eps^{2}.
\end{aligned}
\end{equation}
\end{proposition}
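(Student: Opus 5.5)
We apply \Cref{lem:relative-general} with $\bU=\bU_\eps=\cR_\eps\uf$ and rewrite every term on its right-hand side using the effective momentum equation. First we split the convective acceleration. Write $\rhoe(\partial_t\bU_\eps+\ue\cdot\nabla\bU_\eps)=\rhoe(\partial_t\bU_\eps+\bU_\eps\cdot\nabla\bU_\eps)+\rhoe(\boldsymbol e_\eps\cdot\nabla\bU_\eps)$. Then use \eqref{eq:acceleration-decomp} to replace the first piece by $\rhoe(\boldsymbol A+\boldsymbol G_\eps)$. The $\boldsymbol G_\eps$ contribution is bounded by \eqref{eq:acceleration-bound}, and the quadratic piece by \eqref{eq:quadratic-convection}. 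Each costs $\zeta\|\cD(\boldsymbol e_\eps)\|^2+C_\zeta(\|\boldsymbol e_\eps\|^2+\eps^2)$. The remaining term is $-\int\rhoe\boldsymbol A\cdot\boldsymbol e_\eps$. We write $\rhoe=\rhof+(\rhoe-\rhof)$ on $\Oe$, where $\rhoe-\rhof=q_\eps$ there. Since $\boldsymbol A\in L^\infty$, the density mismatch gives $|\int q_\eps\boldsymbol A\cdot\boldsymbol e_\eps|\le C(\|q_\eps\|_2^2+\|\boldsymbol e_\eps\|_2^2)$.

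Next we substitute the effective equation in the form $\rhof\boldsymbol A=\diver(2\mu(\tf)\cD(\uf))-\mu(\tf)\bM_0\uf-\nabla\pf+\rhof\bfv-\tf\bg$ and test it against $\widetilde{\boldsymbol e}_\eps$. This field is solenoidal and belongs to $H^1_0(\Omega)$, so the pressure term drops out. This gives
\[
-\int_{\Oe}\rhof\boldsymbol A\cdot\boldsymbol e_\eps=\int_\Omega 2\mu(\tf)\cD(\uf):\cD(\widetilde{\boldsymbol e}_\eps)+\int_\Omega\mu(\tf)\bM_0\uf\cdot\widetilde{\boldsymbol e}_\eps-\int_\Omega(\rhof\bfv-\tf\bg)\cdot\widetilde{\boldsymbol e}_\eps .
\]

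We then combine the forcing terms. Adding the microscopic forcing $\int(\rhoe\bfv-\te\bg)\cdot\boldsymbol e_\eps$ leaves $\int q_\eps\bfv\cdot\boldsymbol e_\eps-\int(\te-\tf)\bg\cdot\boldsymbol e_\eps$. The first piece is handled by Young's inequality. For the second we use \eqref{eq:thermal-H1} together with \eqref{eq:velocity-comparison}, which bound it by $C(\|\boldsymbol e_\eps\|^2+\eps^2)$.

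The viscous terms need more care. We add and subtract $\mu(\tf)$ in $-\int2\mu(\te)\cD(\bU_\eps):\cD(\boldsymbol e_\eps)$. The first two terms of the displayed identity, combined with $-\int2\mu(\tf)\cD(\bU_\eps):\cD(\boldsymbol e_\eps)$, form exactly $\mathfrak R_\eps[\mu(\tf),\uf,\boldsymbol e_\eps]$. By \eqref{eq:weighted-residual} with $a=\mu(\tf)\in W^{1,\infty}$ (using $\mu\in W^{1,\infty}$ and $\tf\in W^{2,\infty}$), this is at most $C\eps\|\boldsymbol e_\eps\|_{H^1}\le\zeta\|\cD(\boldsymbol e_\eps)\|^2+C_\zeta\eps^2$ by Korn \eqref{eq:korn}.

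The leftover viscous commutator $\int2(\mu(\tf)-\mu(\te))\cD(\bU_\eps):\cD(\boldsymbol e_\eps)$ is where I expect the main obstacle. The factor $\cD(\bU_\eps)$ is only uniformly bounded in $L^2$, not in $L^\infty$, because of the boundary layers. This is precisely why the $L^\infty$ viscosity estimate \eqref{eq:mu-error-e} is needed. It gives the bound $C(\|\boldsymbol e_\eps\|_2+\eps)\|\nabla\bU_\eps\|_2\|\cD(\boldsymbol e_\eps)\|_2\le\zeta\|\cD(\boldsymbol e_\eps)\|^2+C_\zeta(\|\boldsymbol e_\eps\|^2+\eps^2)$, using \eqref{eq:restriction-estimates}.

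Finally we collect the $\zeta$-terms, which total at most $4\zeta$ after renaming constants, and use $2\mu(\te)\ge2\mumin$ on the left. Integrating in time and using $\cE_\eps(0)$ for the initial term yields \eqref{eq:closed-kinetic}.
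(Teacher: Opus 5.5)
Your proposal is correct and follows the paper's proof essentially step for step. It uses the same convective splitting through \eqref{eq:acceleration-decomp} and \Cref{lem:acceleration}, and the same substitution of the effective momentum equation tested against $\widetilde{\boldsymbol e}_\eps$, which produces $\mathfrak R_\eps[\mu(\tf),\uf,\boldsymbol e_\eps]$. It then bounds the viscosity mismatch in $L^\infty$ via \eqref{eq:mu-error-e}, and the four $\zeta$-absorptions leave the coefficient $2\mumin-4\zeta$.
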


\begin{proof}
Insert $\bU_{\eps}$ into \eqref{eq:general-relative}. The decomposition
\[
 \ue\cdot\nabla\bU_{\eps}
 =\bU_{\eps}\cdot\nabla\bU_{\eps}
 +\boldsymbol{e}_{\eps}\cdot\nabla\bU_{\eps}
\]
separates the comparison convection from the error contribution.
The decomposition \eqref{eq:acceleration-decomp} together with \Cref{lem:acceleration} controls the acceleration and quadratic convection terms. After absorbing the corresponding dissipative contributions into the left-hand side, it remains to estimate
\[
 \int_{\Oe}
 \bigl[-\rhoe\boldsymbol{A}+\rhoe\bfv-\te\bg\bigr]
 \cdot\boldsymbol{e}_{\eps}\,\mathrm d\bx
 -\int_{\Oe}2\mu(\te)\cD(\bU_{\eps}):
 \cD(\boldsymbol{e}_{\eps})\,\mathrm d\bx.
\]
Decompose the first integrand into
\begin{align*}
 -\rhoe\boldsymbol{A}+\rhoe\bfv-\te\bg
 ={}&-\rhof\boldsymbol{A}+\rhof\bfv-\tf\bg+q_{\eps}(\bfv-\boldsymbol{A})
 -(\te-\tf)\bg.
\end{align*}
Combined with $\diver\uf=0$ and the effective continuity equation, \eqref{eq:effective-intro} gives
\[
 -\rhof\boldsymbol{A}+\rhof\bfv-\tf\bg
 =-\diver\!\bigl(2\mu(\tf)\cD(\uf)\bigr)
 +\mu(\tf)\bM_{0}\uf+\nabla\pf.
\]
Since $\widetilde{\boldsymbol{e}}_{\eps}\in H_{0}^{1}(\Omega;\R^{3})$ is solenoidal, integration by parts yields
\begin{align*}
 &\int_{\Oe}
 \bigl[-\rhof\boldsymbol{A}+\rhof\bfv-\tf\bg\bigr]
 \cdot\boldsymbol{e}_{\eps}\,\mathrm d\bx=
 \int_{\Omega}2\mu(\tf)\cD(\uf):
 \cD(\widetilde{\boldsymbol{e}}_{\eps})\,\mathrm d\bx
 +\int_{\Omega}\mu(\tf)\bM_{0}\uf\cdot
 \widetilde{\boldsymbol{e}}_{\eps}\,\mathrm d\bx.
\end{align*}
Then we add and subtract $\mu(\tf)$ in the microscopic cross term and define
\[
\begin{aligned}
 \mathfrak R_{\mathrm{geo}}
 :={}&\int_\Omega2\mu(\tf)\cD(\uf):\cD(\widetilde{\boldsymbol e}_\eps)\,\mathrm d\bx
 +\int_\Omega\mu(\tf)\bM_0\uf\cdot\widetilde{\boldsymbol e}_\eps\,\mathrm d\bx-\int_{\Oe}2\mu(\tf)\cD(\bU_\eps):\cD(\boldsymbol e_\eps)\,\mathrm d\bx.
\end{aligned}
\]
These three contributions match \eqref{eq:weighted-residual-definition} term by term. The first is the fixed-domain viscous term, the second is the Brinkman term, and the third is the perforated-domain viscous term with $\cR_\eps\uf=\bU_\eps$. Therefore, we have $\mathfrak R_{\mathrm{geo}}=\mathfrak R_\eps[\mu(\tf),\uf,\boldsymbol e_\eps]$. Using $\mu(\tf)\in W^{1,\infty}(\Omega)$, we find
\[
 \abs{\mathfrak R_{\mathrm{geo}}}
 \le C\eps
 \norm{\boldsymbol{e}_{\eps}}{H^{1}(\Oe)}
 \le \zeta\norm{\cD(\boldsymbol{e}_{\eps})}{L^{2}(\Oe)}^{2}
 +C_{\zeta}\eps^{2}.
\]
For the viscosity mismatch, write
\[
 \mathfrak R_{\mu}
 =-2\int_{\Oe}
 \bigl(\mu(\te)-\mu(\tf)\bigr)
 \cD(\bU_{\eps}):\cD(\boldsymbol{e}_{\eps})\,\mathrm d\bx.
\]
By \eqref{eq:mu-error-e} and the uniform $H^{1}$ bound for $\bU_{\eps}$,
\[
 \abs{\mathfrak R_{\mu}}
 \le \zeta\norm{\cD(\boldsymbol{e}_{\eps})}{L^{2}(\Oe)}^{2}
 +C_{\zeta}\left(
 \norm{\boldsymbol{e}_{\eps}}{L^{2}(\Oe)}^{2}+\eps^{2}
 \right).
\]

The density remainder satisfies
\[
 \left|
 \int_{\Oe}q_{\eps}(\bfv-\boldsymbol{A})
 \cdot\boldsymbol{e}_{\eps}\,\mathrm d\bx
 \right|
 \le C\left(
 \norm{q_{\eps}}{L^{2}(\Omega)}^{2}
 +\norm{\boldsymbol{e}_{\eps}}{L^{2}(\Oe)}^{2}
 \right).
\]
For the thermal force, \Cref{lem:thermal} and \eqref{eq:velocity-comparison} show
\[
 \left|
 \int_{\Oe}(\te-\tf)\bg\cdot\boldsymbol{e}_{\eps}\,\mathrm d\bx
 \right|
 \le C\left(
 \norm{\boldsymbol{e}_{\eps}}{L^{2}(\Oe)}^{2}+\eps^{2}
 \right).
\]

On the left-hand side of \eqref{eq:general-relative}, coercivity gives
\[
 \int_{\Oe}2\mu(\te)\abs{\cD(\boldsymbol{e}_{\eps})}^{2}\,\mathrm d\bx
 \ge 2\mumin\norm{\cD(\boldsymbol{e}_{\eps})}{L^{2}(\Oe)}^{2}.
\]
Then we apply Young's inequality with the same parameter $\zeta$ to the acceleration error, the quadratic convection term, the geometric residual, and the viscosity mismatch. 
Absorbing the four resulting dissipative remainders into the left-hand side leaves the coefficient $2\mumin-4\zeta$. The density and thermal-force remainders contain no gradient contribution and therefore enter only the quadratic error integral. Collecting these estimates, we prove \eqref{eq:closed-kinetic}.
\end{proof}

Gronwall's inequality now closes the density and kinetic estimates.

\begin{proof}[Proof of \Cref{thm:main}]
Recall the relative energy \eqref{eq:relative-energy}. Choose $\zeta=\mumin/8$ in \Cref{prop:kinetic-closed}. The uniform bounds from \Cref{lem:uniform} ensure that all terms in the following calculation are integrable. Multiplying \eqref{eq:density-ineq} by $1/2$ and adding the result to \eqref{eq:closed-kinetic}, one obtains
\begin{equation}\label{eq:gronwall-inequality}
 \cE_{\eps}(\tau)
 +c_{0}\int_{0}^{\tau}
 \norm{\cD(\boldsymbol{e}_{\eps})}{L^{2}(\Oe)}^{2}\,\mathrm dt
 \le
 \cE_{\eps}(0)
 +C\int_{0}^{\tau}\cE_{\eps}(t)\,\mathrm dt
 +C\eps^{2},
\end{equation}
for a.e. $\tau\in(0,T)$, with $c_{0}=3\mumin/2>0$ independent of $\eps$. To make the endpoint argument explicit, set
\[
 G_\eps(t)=\cE_\eps(0)+C\eps^2+C\int_0^t\cE_\eps(s)\,\mathrm ds.
\]
Then \eqref{eq:gronwall-inequality} implies $\cE_\eps(t)\le G_\eps(t)$, while $G_\eps'(t)=C\cE_\eps(t)\le C G_\eps(t)$ for a.e. $t\in(0,T)$. We apply Gronwall's inequality to the absolutely continuous function $G_\eps$. This gives a uniform bound for $G_\eps$ and hence for the essential supremum of $\cE_\eps$. Returning to \eqref{eq:gronwall-inequality} also controls the dissipation and yields
\begin{equation}\label{eq:relative-final}
 \esssup_{0\le t\le T}\cE_{\eps}(t)
 +\int_{0}^{T}
 \norm{\cD(\boldsymbol{e}_{\eps})}{L^{2}(\Oe)}^{2}\,\mathrm dt
 \le C\bigl(\cE_{\eps}(0)+\eps^{2}\bigr).
\end{equation}
Estimate \eqref{eq:relative-final} first gives the density component of \eqref{eq:main-estimate}. Since $\rhoe\ge\rhomin$, the same estimate also controls $\boldsymbol e_\eps$ in $L^{\infty}(0,T;L^{2})$. Korn's inequality \eqref{eq:korn} then yields the corrected $L^{2}(0,T;H^{1})$ bound.

Next, \eqref{eq:velocity-comparison} gives
\[
 \norm{\widetilde{\ue}-\uf}{L^{\infty}(0,T;L^{2}(\Omega))}
 \le C\bigl(\cE_{\eps}(0)^{1/2}+\eps\bigr).
\]
The pointwise estimate \eqref{eq:thermal-Linf}, together with \eqref{eq:velocity-comparison} and \eqref{eq:relative-final}, gives
\[
 \norm{\te-\tf}{L^\infty((0,T)\times\Omega)}
 \le C\bigl(\cE_\eps(0)^{1/2}+\eps\bigr).
\]
Finally, integrating \eqref{eq:thermal-H1} in time and using \eqref{eq:relative-final} yields
\[
 \int_{0}^{T}\norm{\te-\tf}{H^{1}(\Omega)}^{2}\,\mathrm dt
 \le C\bigl(\cE_{\eps}(0)+\eps^{2}\bigr).
\]
Thus estimates \eqref{eq:main-estimate}--\eqref{eq:main-convergences} follow. Since $\mu\in W^{1,\infty}(\R)$, we also have
\[
 \norm{\mu(\te)-\mu(\tf)}{L^\infty((0,T)\times\Omega)}
 \le C\bigl(\cE_\eps(0)^{1/2}+\eps\bigr).
\]

\end{proof}

\section{Dissipation limit}\label{sec:dissipation}

The same corrector identifies the limiting viscous dissipation, including the Brinkman contribution generated by the critical boundary layers.

\begin{corollary}[Viscous dissipation]\label{cor:dissipation}
Under the assumptions of \Cref{thm:main}, if $\cE_{\eps}(0)\to0$ as $\eps\to0$, then
\begin{equation}\label{eq:dissipation-limit}
\begin{aligned}
 \int_{0}^{T}\!\int_{\Oe}
 2\mu(\te)\abs{\cD(\ue)}^{2}\,\mathrm d\bx\,\mathrm dt
 \to{}&
 \int_{0}^{T}\!\int_{\Omega}
 2\mu(\tf)\abs{\cD(\uf)}^{2}\,\mathrm d\bx\,\mathrm dt+\int_{0}^{T}\!\int_{\Omega}
 \mu(\tf)\bM_{0}\uf\cdot\uf\,\mathrm d\bx\,\mathrm dt.
\end{aligned}
\end{equation}
Moreover, the uncorrected gradient error satisfies
\begin{equation}\label{eq:uncorrected-dissipation-defect}
 \int_0^T\!\int_\Omega
 2\mu(\tf)|\cD(\widetilde{\ue}-\uf)|^2\,\mathrm d\bx\,\mathrm dt
 \to\int_0^T\!\int_\Omega
 \mu(\tf)\bM_0\uf\cdot\uf\,\mathrm d\bx\,\mathrm dt.
\end{equation}
If $\uf\not\equiv\boldsymbol0$ in $(0,T)\times\Omega$, then the limit in \eqref{eq:uncorrected-dissipation-defect} is strictly positive.
\end{corollary}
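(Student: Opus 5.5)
Write $\ue=\bU_\eps+\boldsymbol e_\eps$ and reduce everything to the dissipation of the corrector $\bU_\eps=\cR_\eps\uf$. By \eqref{eq:main-estimate} with $\cE_\eps(0)\to0$, $\int_0^T\|\cD(\boldsymbol e_\eps)\|_{L^2(\Oe)}^2\,\mathrm dt\to0$. Moreover, $\|\cD(\bU_\eps)\|_{L^2}$ is uniformly bounded by \Cref{prop:restriction}. Hence
\[
 \int_0^T\!\!\int_{\Oe}2\mu(\te)|\cD(\ue)|^2-\int_0^T\!\!\int_{\Oe}2\mu(\te)|\cD(\bU_\eps)|^2\to0 .
\]
Next, replace $\mu(\te)$ by $\mu(\tf)$. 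The $L^\infty$ bound on $\mu(\te)-\mu(\tf)$ from \Cref{thm:main} is $O(\cE_\eps(0)^{1/2}+\eps)\to0$, and it multiplies the bounded quantity $\int|\cD(\bU_\eps)|^2$. The claim \eqref{eq:dissipation-limit} therefore reduces to identifying $\lim\int_0^T\int_{\Oe}2\mu(\tf)|\cD(\bU_\eps)|^2$.

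For this I will use the weighted residual \eqref{eq:weighted-residual} with $a=\mu(\tf)$, $\bv=\uf$, and test field $\bz_\eps=\bU_\eps\in H^1_{0,\sigma}(\Oe)$, which is uniformly bounded in $H^1$. This gives
\[
 \int_{\Oe}2\mu(\tf)|\cD(\bU_\eps)|^2
 =\int_\Omega2\mu(\tf)\cD(\uf):\cD(\widetilde{\bU}_\eps)+\int_\Omega\mu(\tf)\bM_0\uf\cdot\widetilde{\bU}_\eps+O(\eps).
\]
Now $\widetilde{\bU}_\eps\to\uf$ strongly in $L^2$ by \eqref{eq:restriction-estimates}, so the Brinkman term converges to $\int\mu(\tf)\bM_0\uf\cdot\uf$. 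For the viscous term, $\nabla(\widetilde{\bU}_\eps-\uf)\to0$ in $L^{6/5}$ and is paired with $\mu(\tf)\cD(\uf)\in L^\infty$, so it converges to $\int2\mu(\tf)|\cD(\uf)|^2$. Integrating in time with uniform bounds gives \eqref{eq:dissipation-limit}. This step is the main point: the $L^{6/5}$ smallness is exactly what is needed, since $\nabla\bU_\eps$ does not converge strongly in $L^2$.

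For \eqref{eq:uncorrected-dissipation-defect}, expand
\[
 2\mu(\tf)|\cD(\widetilde{\ue}-\uf)|^2=2\mu(\tf)|\cD(\widetilde{\ue})|^2-4\mu(\tf)\cD(\widetilde{\ue}):\cD(\uf)+2\mu(\tf)|\cD(\uf)|^2 .
\]
\begin{itemize}
\item The first term equals the dissipation above up to $o(1)$, because $\mu(\te)-\mu(\tf)\to0$ in $L^\infty$ and $\widetilde{\ue}$ is bounded in $L^2H^1$.
\item For the cross term, write $\widetilde{\ue}=\widetilde{\bU}_\eps+\widetilde{\boldsymbol e}_\eps$. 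The $\boldsymbol e_\eps$ part vanishes in $L^2H^1$. The $\bU_\eps$ part converges to $\int2\cdot2\mu(\tf)|\cD(\uf)|^2$, again by the $L^{6/5}$ gradient bound against the $L^\infty$ field $\mu(\tf)\cD(\uf)$.
\end{itemize}
Summing the contributions gives $2\int\mu|\cD\uf|^2+\int\mu\bM_0\uf\cdot\uf-4\int\mu|\cD\uf|^2+2\int\mu|\cD\uf|^2=\int\mu(\tf)\bM_0\uf\cdot\uf$.

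Strict positivity follows because $\bM_0$ is positive definite and $\mu\ge\mumin$. The limit is therefore at least $\mumin\lambda_{\min}(\bM_0)\|\uf\|_{L^2}^2>0$ whenever $\uf\not\equiv0$.
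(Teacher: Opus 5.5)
Your proposal is correct and follows the paper's proof. You reduce to the corrector $\bU_\eps$ via the relative-energy bound and the $L^\infty$ viscosity estimate, apply the weighted residual with $(a,\bv,\bz_\eps)=(\mu(\tf),\uf,\bU_\eps)$, and expand the square for the uncorrected defect. The only deviation is in the bulk and cross terms: you control $\int_\Omega 2\mu(\tf)\cD(\uf):\cD(\widetilde{\bU}_\eps-\uf)\,\mathrm d\bx$ and the cross term through the $L^{6/5}$ gradient bound in \eqref{eq:restriction-estimates}, splitting $\widetilde{\ue}=\widetilde{\bU}_\eps+\widetilde{\boldsymbol e}_\eps$; the paper instead integrates by parts, using $\diver(2\mu(\tf)\cD(\uf))\in L^\infty$ and the $L^2$ smallness of $\widetilde{\bU}_\eps-\uf$ and $\widetilde{\ue}-\uf$. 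Both routes are valid under \Cref{ass:strong}.
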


\begin{proof}
By \eqref{eq:relative-final}, we have
\[
 \|\cD(\ue-\bU_{\eps})\|_{L^{2}((0,T)\times\Oe)}\to0.
\]
Using \Cref{lem:thermal}, \eqref{eq:velocity-comparison} and the $L^\infty(0,T;L^2)$ velocity bound from \eqref{eq:relative-final}, we also have convergence in $L^\infty((0,T)\times\Omega)$ in the essential-supremum sense:
\[
 \norm{\mu(\te)-\mu(\tf)}{L^{\infty}((0,T)\times\Omega)}\to0.
\]
Uniformly in $\eps$, the family $\bU_\eps$ is bounded in $L^\infty(0,T;H^1(\Oe))$. Therefore the cross term in $\cD(\ue)=\cD(\bU_\eps)+\cD(\ue-\bU_\eps)$ satisfies
\[
 \left|\int_0^T\!\int_{\Oe}
 4\mu(\tf)\cD(\bU_\eps):\cD(\ue-\bU_\eps)\,\mathrm d\bx\,\mathrm dt\right|
 \le C\norm{\cD(\ue-\bU_\eps)}{L^2((0,T)\times\Oe)}\to0,
\]
and the quadratic error is $o(1)$ as well. In addition,
\[
 \norm{\cD(\ue)}{L^2((0,T)\times\Oe)}
 \le \norm{\cD(\ue-\bU_\eps)}{L^2}
      +\norm{\cD(\bU_\eps)}{L^2}\le C,
\]
and thus
\[
 \left|\int_0^T\!\int_{\Oe}
 2(\mu(\te)-\mu(\tf))|\cD(\ue)|^2\,\mathrm d\bx\,\mathrm dt\right|
 \le C\norm{\mu(\te)-\mu(\tf)}{L^\infty((0,T)\times\Omega)}=o(1).
\]
Consequently, the microscopic dissipation differs by $o(1)$ from
\[
 \int_0^T\!\int_{\Oe}2\mu(\tf)|\cD(\bU_\eps)|^2\,\mathrm d\bx\,\mathrm dt.
\]
Let $\widetilde{\bU}_\eps$ denote the zero extension of $\bU_\eps$ to $\Omega$. Then
\[
 \widetilde{\bU}_\eps\in H_0^1(\Omega;\R^3),
 \qquad
 \nabla\widetilde{\bU}_\eps=\boldsymbol0\quad\text{a.e.\ in }\He,
\]
and hence
\[
 \int_\Omega2\mu(\tf)|\cD(\widetilde{\bU}_\eps)|^2\,\mathrm d\bx
 =\int_{\Oe}2\mu(\tf)|\cD(\bU_\eps)|^2\,\mathrm d\bx.
\]

To identify the limiting dissipation, set $a=\mu(\tf)$, $\bv=\uf$, and $\bz_\eps=\bU_\eps$. The bounds in \Cref{ass:strong} then allow us to apply \Cref{prop:restriction} at each fixed time. Equations \eqref{eq:weighted-residual-definition}--\eqref{eq:weighted-residual} then yield, uniformly in time,
\begin{align*}
 \int_{\Oe}2\mu(\tf)|\cD(\bU_\eps)|^2\,\mathrm d\bx
 ={}&\int_\Omega2\mu(\tf)\cD(\uf):\cD(\widetilde{\bU}_\eps)\,\mathrm d\bx+\int_\Omega\mu(\tf)\bM_0\uf\cdot\widetilde{\bU}_\eps\,\mathrm d\bx+O(\eps).
\end{align*}
Convergence of the Brinkman term follows from the $L^2$ estimate in \eqref{eq:restriction-estimates}. For the bulk term, since $\diver(2\mu(\tf)\cD(\uf))\in L^\infty(\Omega;\R^3)$ under \Cref{ass:strong}, integration by parts on the fixed domain gives
\[
 \left|\int_\Omega2\mu(\tf)\cD(\uf):\cD(\widetilde{\bU}_\eps-\uf)\,\mathrm d\bx\right|
 \le C\norm{\widetilde{\bU}_\eps-\uf}{L^2(\Omega)}\le C\eps.
\]
Integrating in time proves \eqref{eq:dissipation-limit}. 

Using \eqref{eq:main-convergences} and integrating by parts on the fixed domain,
\[
\begin{aligned}
 &\int_0^T\!\int_\Omega 2\mu(\tf)\cD(\uf):
       \cD(\widetilde{\ue}-\uf)\,\mathrm d\bx\,\mathrm dt=-\int_0^T\!\int_\Omega
       \diver(2\mu(\tf)\cD(\uf))\cdot
                 (\widetilde{\ue}-\uf)\,\mathrm d\bx\,\mathrm dt
       \to0.
\end{aligned}
\]
The coefficient replacement established in the proof of \eqref{eq:dissipation-limit} gives the explicit convergence
\[
 \int_0^T\!\int_\Omega2\mu(\tf)
       |\cD(\widetilde{\ue})|^2\,\mathrm d\bx\,\mathrm dt
 \to
 \int_0^T\!\int_\Omega2\mu(\tf)|\cD(\uf)|^2\,\mathrm d\bx\,\mathrm dt
 +\int_0^T\!\int_\Omega\mu(\tf)\bM_0\uf\cdot\uf\,\mathrm d\bx\,\mathrm dt.
\]
Expanding the square in the left-hand side of \eqref{eq:uncorrected-dissipation-defect} then gives the Brinkman term. Its positivity for nonzero $\uf$ follows from $\mu\ge\mumin$ and the positive definiteness of $\bM_0$.
\end{proof}

\appendix
\section{Local regular solvability with compatible initial jets}
\label{sec:effective-existence}

To verify that the comparison class in \Cref{ass:strong} is nonempty for a smooth class of data, this appendix establishes local regular solvability for the effective system. The smoothness assumptions imposed here on $\mu$, the data and the boundary are used only for this auxiliary construction. The main homogenization theorem retains the lower coefficient regularity stated in \eqref{eq:coefficients}. The argument proceeds from a high-order stationary Stokes estimate to a time-dependent linear hierarchy and then to a fixed-point construction, with the initial time derivatives constrained by a finite compatibility hierarchy. In \Cref{thm:effective-local}, the force is prescribed independently of the solution. For comparison, lower-regularity strong-solution theories were developed in \cite{ChoeKim,ChoKim}.

In this appendix, all Sobolev spaces are over $\Omega$, with values in $\R^3$ for vector fields. By an initial time jet of order $N$ we mean the finite collection $\{\partial_t^j\bv(0)\}_{j=0}^N$ (and, when needed, the analogous time derivatives of the pressure and coefficients). Write
\[
 H_\sigma^k=H^k(\Omega;\R^3)\cap H_{0,\sigma}^1(\Omega),
 \qquad k\ge1,
\]
and abbreviate $\operatorname{Tr}_{\Omega}$ by $\operatorname{Tr}$. For a scalar coefficient $b$, define
\[
 \mathcal L_b\bv=-\diver(2b\cD(\bv))+b\bM_0\bv.
\]
The operator $\mathcal L_b$ is linear in the coefficient $b$. We therefore use the same notation when $b$ is a time derivative of the viscosity coefficient, for which no sign condition is imposed.

For any $\bv\in H^1(\Omega;\R^3)$ with $\diver\bv=0$ and $\bv\cdot\bn=0$, the Neumann operator is defined by
\begin{equation}\label{eq:effective-neumann-map}
 \left\{
 \begin{aligned}
 -\kf\Delta\Theta[\bv]&=\bv\cdot\bg,
 &&\text{in }\Omega,\\
 \partial_{\bn}\Theta[\bv]&=0,
 &&\text{on }\partial\Omega,\\
 \int_\Omega\Theta[\bv]\,\mathrm d\bx&=0.
 \end{aligned}
\right.
\end{equation}
Compatibility for \eqref{eq:effective-neumann-map} follows from $\bg=\nabla F$, since integration by parts gives $\int_\Omega\bv\cdot\bg\,\mathrm d\bx=0$. Hence Lax--Milgram on the zero-mean subspace of $H^1(\Omega)$ defines $\Theta$ uniquely, with the mean condition fixing the same thermal reference level as in \eqref{eq:effective-boundary}. For smooth $F$ and $\partial\Omega$, elliptic regularity then yields
\begin{equation}\label{eq:appendix-theta-bound}
 \|\Theta[\bv]\|_{H^{k+2}}\le C_F\|\bv\|_{H^k},
 \qquad k\ge0,
\end{equation}
whenever the right-hand side is finite. 
Fix $\overline T>0$ and an integer $N\ge3$. Put $s=2N+1$ and $s_j=s-2j$ for $0\le j\le N$. For $0<\tau\le\overline T$, define the velocity space $\mathbb V_{N,\tau}$ by
\begin{equation}\label{eq:appendix-velocity-space}
\begin{gathered}
 \partial_t^j\bv\in C([0,\tau];H_\sigma^{s_j}) \cap L^2(0,\tau;H^{s_j+1}) \ (0\le j\le N), \quad \partial_t^{N+1}\bv\in L^2(0,\tau;L^2),\\[0.25em]
 \begin{aligned}
 \|\bv\|_{\mathbb V_{N,\tau}}
 :={}&\sum_{j=0}^N\Bigl(
 \|\partial_t^j\bv\|_{C H^{s_j}}
 +\|\partial_t^j\bv\|_{L^2H^{s_j+1}}\Bigr)+\|\partial_t^{N+1}\bv\|_{L^2L^2}.
 \end{aligned}
\end{gathered}
\end{equation}
Here $C H^k=C([0,\tau];H^k)$ and all time norms in this appendix are on $(0,\tau)$ unless another interval is displayed. For a force $\boldsymbol h$, write $\boldsymbol h_j=\partial_t^j\boldsymbol h$. Its data space $\mathbb F_{N,\tau}$ is specified by
\begin{equation}\label{eq:appendix-force-space}
\begin{gathered}
 \boldsymbol h_j\in C H^{s_j-2}\cap L^2H^{s_j-1}\  (0\le j\le N-1),\quad \boldsymbol h_N\in L^2L^2,\\[0.25em]
 \begin{aligned}
 \|\boldsymbol h\|_{\mathbb F_{N,\tau}}
 :={}&\sum_{j=0}^{N-1}\Bigl(
 \|\boldsymbol h_j\|_{C H^{s_j-2}}
 +\|\boldsymbol h_j\|_{L^2H^{s_j-1}}\Bigr)+\|\boldsymbol h_N\|_{L^2L^2}.
 \end{aligned}
\end{gathered}
\end{equation}
In particular, $s_N=1$ and $s_{N-1}=3$. Consequently, all time derivatives can first be estimated at the $H^1$ energy level before the spatial regularity is recovered.

For coefficients $r,a$, set $r_j=\partial_t^jr$ and $a_j=\partial_t^ja$. Assume
\begin{equation}\label{eq:linear-coefficient-class}
\begin{gathered}
 r\in C H^{s+1},
 \quad
 r_j\in C H^{s_j+2}\quad(1\le j\le N),\\
 a_j\in C H^{s_j+2}\quad(0\le j\le N),\quad  0<r_*\le r\le r^*, \quad 0<a_*\le a\le a^*.
\end{gathered}
\end{equation}
Time derivatives in these definitions are distributional. Define
\[
 \mathfrak K_N:=\|r\|_{C H^{s+1}}
 +\sum_{j=1}^N\|r_j\|_{C H^{s_j+2}}
 +\sum_{j=0}^N\|a_j\|_{C H^{s_j+2}}.
\]
The lower bounds and $\mathfrak K_N$ control all coefficient norms needed in the linear estimates, including $\|r_1\|_{L^\infty}$ and $\|a_j\|_{L^\infty W^{1,\infty}}$. The initial jets are determined recursively. Set $\boldsymbol U_0=\boldsymbol u_0$. For $0\le j<N$, given $\boldsymbol U_0,\ldots,\boldsymbol U_j$, define
\[
 \boldsymbol K_j:=\boldsymbol h_j(0)
 -\sum_{\ell=0}^j\binom j\ell
       \mathcal L_{a_\ell(0)}\boldsymbol U_{j-\ell}-\sum_{\ell=1}^j\binom j\ell
       r_\ell(0)\boldsymbol U_{j+1-\ell},
\]
with
\begin{equation}\label{eq:linear-jet-recursion}
\begin{gathered}
 \left\{
 \begin{aligned}
 r(0)\boldsymbol U_{j+1}+\nabla P_j&=\boldsymbol K_j
     &&\text{in }\Omega,\\
 \diver\boldsymbol U_{j+1}&=0
     &&\text{in }\Omega,\\
 \boldsymbol U_{j+1}\cdot\bn&=0
     &&\text{on }\partial\Omega,\\
 \int_\Omega P_j\,\mathrm d\bx&=0,
 \end{aligned}
 \right.
 \qquad 0\le j<N.
\end{gathered}
\end{equation}
Equivalently, $P_j$ is the zero-mean solution of the weighted Neumann problem
\[
 \left\{
 \begin{aligned}
 \diver\bigl(r(0)^{-1}\nabla P_j\bigr)
 &=\diver\bigl(r(0)^{-1}\boldsymbol K_j\bigr)
       &&\text{in }\Omega,\\
 r(0)^{-1}\nabla P_j\cdot\bn
 &=r(0)^{-1}\boldsymbol K_j\cdot\bn
       &&\text{on }\partial\Omega.
 \end{aligned}
 \right.
\]
Its compatibility follows from the divergence theorem. This recursion fixes the normal component of $\boldsymbol U_{j+1}$. The full no-slip conditions
\begin{equation}\label{eq:linear-jet-compatibility}
 \boldsymbol U_j\in H_\sigma^{s_j},\qquad 0\le j\le N,
\end{equation}
are additional requirements on the data. At $j=0$, \eqref{eq:linear-jet-recursion} is the usual first compatibility relation.

The spatial bootstrap uses the following elliptic estimate, stated for a spatially variable viscosity and arbitrary integer order.

\begin{lemma}[High-order stationary Stokes estimate with scalar variable viscosity]
\label{lem:stationary-variable-stokes}
Let $m\ge0$ be an integer and let $\partial\Omega$ be of class $C^{m+3}$. Assume
\[
 a\in H^{m+3}(\Omega),\qquad
 0<a_*\le a(\bx)\le a^*<\infty\quad\text{a.e.\ in }\Omega.
\]
If $\boldsymbol f\in H^m(\Omega;\R^3)$ and
\[
 \mathcal L_a\bv+\nabla\pi=\boldsymbol f,\qquad
 \diver\bv=0\quad\text{in }\Omega,\qquad
 \bv=\boldsymbol0\quad\text{on }\partial\Omega,\qquad
 \int_\Omega\pi\,\mathrm d\bx=0,
\]
in the weak sense, with $\bv\in H_{0,\sigma}^1(\Omega)$ and $\pi\in L^2(\Omega)$, then
\begin{equation}\label{eq:stationary-variable-stokes}
 \|\bv\|_{H^{m+2}}+\|\pi\|_{H^{m+1}}
 \le C\bigl(\|\boldsymbol f\|_{H^m}+\|\bv\|_{H^1}\bigr).
\end{equation}
The constant depends only on $m$, $\Omega$, $\bM_0$, $a_*$, $a^*$ and an upper bound for $\|a\|_{H^{m+3}}$.
\end{lemma}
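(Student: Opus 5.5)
We prove \eqref{eq:stationary-variable-stokes} by induction on $m$, treating the variable viscosity as a perturbation of the constant-coefficient Stokes operator.

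\emph{Reduction.} Since $\diver\bv=0$, we have $\diver(2a\cD(\bv))=a\Delta\bv+2\cD(\bv)\nabla a$. Dividing the momentum equation by $a\ge a_*$ and setting $\widetilde\pi=\pi/a$ gives
\[
 -\Delta\bv+\nabla\widetilde\pi=\boldsymbol F
 :=a^{-1}\boldsymbol f+2a^{-1}\cD(\bv)\nabla a-\bM_0\bv-\pi a^{-2}\nabla a .
\]
This is the classical Dirichlet Stokes system, with $\diver\bv=0$ and $\bv=\boldsymbol0$ on $\partial\Omega$. The Cattabriga/Galdi estimate \cite[Theorem~IV.6.1]{Galdi} on the $C^{m+3}$ domain then yields
\[
 \|\bv\|_{H^{k+2}}+\|\widetilde\pi-c\|_{H^{k+1}}\le C\|\boldsymbol F\|_{H^k},
 \qquad 0\le k\le m,
\]
where $c$ is the mean of $\widetilde\pi$.

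\emph{Base step $m=0$.} First we need an $L^2$ bound on $\pi$. Because $\pi$ has zero mean, the Bogovski\u{\i} operator and the weak formulation give
\[
 \|\pi\|_{L^2}\le C\bigl(\|\boldsymbol f\|_{H^{-1}}+\|\bv\|_{H^1}\bigr).
\]
Since $H^{3}\hookrightarrow W^{1,\infty}$, we have $\nabla a\in L^\infty$, and therefore $\|\boldsymbol F\|_{L^2}\le C(\|\boldsymbol f\|_{L^2}+\|\bv\|_{H^1})$. It follows that $\bv\in H^2$ and $\widetilde\pi-c\in H^1$. To return to $\pi$, write $\pi=a\widetilde\pi$ with $a\in W^{1,\infty}$ and note $|c|\le C\|\widetilde\pi\|_{L^2}\le C\|\pi\|_{L^2}$. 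This proves the case $m=0$.

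\emph{Induction step.} Assume the estimate holds at level $k-1\le m-1$, so that $\bv\in H^{k+1}$ and $\pi\in H^{k}$. We estimate $\|\boldsymbol F\|_{H^k}$ term by term using Moser/Kato--Ponce product estimates in three dimensions, with $a\in H^{m+3}$. The main terms are:
\begin{itemize}
\item $\|a^{-1}\boldsymbol f\|_{H^k}\le C\|\boldsymbol f\|_{H^k}$, since $a^{-1}\in H^{m+3}$ with norm controlled by $a_*$ and $\|a\|_{H^{m+3}}$ (composition estimate);
\item $\|\cD(\bv)\nabla a\|_{H^k}\le C(\|\nabla a\|_{L^\infty}\|\bv\|_{H^{k+1}}+\|\nabla a\|_{H^k}\|\nabla\bv\|_{L^\infty})$;
\item $\|\pi\nabla a\|_{H^k}\le C(\|\pi\|_{H^k}\|\nabla a\|_{L^\infty}+\|\pi\|_{L^\infty}\|\nabla a\|_{H^{k}})$.
\end{itemize}
For $k\ge1$ the induction hypothesis at level $k-1$ gives $\bv\in H^{k+1}$ and $\pi\in H^k$. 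Hence the $L^\infty$ norms of $\nabla\bv$ and $\pi$ are either available from $H^{k+1}\hookrightarrow W^{1,\infty}$ when $k\ge2$, or can be handled by the $L^4$--$L^4$ pairing for $k=1$. The lower-order factors are thus bounded by $C(\|\boldsymbol f\|_{H^{k-1}}+\|\bv\|_{H^1})$. The Stokes estimate then gives $\bv\in H^{k+2}$ and $\widetilde\pi-c\in H^{k+1}$, and multiplying back by $a$ gives $\pi\in H^{k+1}$ with the claimed bound. Running the induction up to $k=m$ proves \eqref{eq:stationary-variable-stokes}. The constant depends only on $m$, $\Omega$, $\bM_0$, $a_*$, $a^*$ and $\|a\|_{H^{m+3}}$.

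\emph{Main obstacle.} The delicate part is the pressure coupling term $\pi a^{-2}\nabla a$, which carries the top-order regularity of $\pi$ multiplied by $\nabla a$. Two things must be checked. First, the term is genuinely one order lower, so that it only needs $\pi\in H^k$ at the stage where we seek $\pi\in H^{k+1}$. Second, the available regularity $a\in H^{m+3}$, i.e.\ $\nabla a\in H^{m+2}$, is enough for every product estimate. As a fallback, one can avoid the division by $a$ and instead apply tangential difference quotients directly to $\mathcal L_a$, combined with the Nečas estimate for the pressure; this follows the standard Agmon--Douglis--Nirenberg route for variable-coefficient Stokes systems.
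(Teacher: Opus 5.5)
Your proposal is correct and follows essentially the same route as the paper. Both arguments divide by $a$ with $\pi/a$ as the new pressure to reduce to the constant-coefficient Dirichlet Stokes system, obtain the base-case bound on $\|\pi\|_{L^2}$ from an inf--sup (Bogovski\u{\i}) argument, and run an induction on the order via Galdi's Theorem~IV.6.1 plus product estimates. The only difference is cosmetic: the paper uses the non-tame fact that multiplication by an $H^{m+2}$ function is bounded on $H^m$ in three dimensions, which avoids your separate $L^\infty$ versus $L^4$--$L^4$ case split at $k=1$.
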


\begin{proof}
We reduce the variable-viscosity system to the classical constant-viscosity Dirichlet Stokes estimate and then argue by induction on $m$. Since $\diver\bv=0$, we have
\[
 -\diver(2a\cD(\bv))
 =-a\Delta\bv-2\cD(\bv)\nabla a.
\]
Set $\varpi=a^{-1}\pi$. Dividing the momentum equation by $a$ and using
$a^{-1}\nabla\pi=\nabla \varpi+\varpi\,a^{-1}\nabla a$ gives
\begin{equation}\label{eq:stationary-reduced-stokes}
 -\Delta\bv+\nabla \varpi
 =\boldsymbol F
 :=a^{-1}\boldsymbol f
   +2a^{-1}\cD(\bv)\nabla a
   -\bM_0\bv
   -\varpi\,a^{-1}\nabla a,
 \qquad \diver\bv=0,\qquad \bv|_{\partial\Omega}=0.
\end{equation}
Although $\varpi$ need not have zero mean,
$\|\varpi\|_{L^2}\le a_*^{-1}\|\pi\|_{L^2}$. Hence its mean is controlled, so an estimate for $\varpi-\overline\varpi$ also yields an estimate for $\varpi$.

We first establish the case $m=0$. The map
$\bphi\longmapsto\int_\Omega 2a\cD(\bv):\cD(\bphi)
 +a\bM_0\bv\cdot\bphi\,\mathrm d\bx$
is bounded on $H_0^1(\Omega;\R^3)$. Hence the inf--sup estimate applied to original weak equation gives
\[
 \|\pi\|_{L^2}
 \le C\bigl(\|\boldsymbol f\|_{H^{-1}}+\|\bv\|_{H^1}\bigr),
\]
Since $a\in H^3(\Omega)\hookrightarrow W^{1,\infty}(\Omega)$, the right-hand side of \eqref{eq:stationary-reduced-stokes} belongs to $L^2$ and
\[
 \|\boldsymbol F\|_{L^2}
 \le C\bigl(\|\boldsymbol f\|_{L^2}
             +\|\bv\|_{H^1}+\|\pi\|_{L^2}\bigr).
\]
The classical constant-coefficient Dirichlet Stokes estimate
\cite[Theorem~IV.6.1]{Galdi}, applied to \eqref{eq:stationary-reduced-stokes}, therefore yields
$\bv\in H^2$ and $\varpi-\overline \varpi\in H^1$, where
$\overline \varpi=|\Omega|^{-1}\int_\Omega \varpi\,\mathrm d\bx$.
The $L^2$ bound for $\varpi$ controls $\overline \varpi$, hence
$\|\varpi\|_{H^1}\le C(\|\boldsymbol f\|_{L^2}+\|\bv\|_{H^1})$.
Multiplication by $a$ then gives the same bound for
$\pi=a\varpi$ in $H^1$. This proves \eqref{eq:stationary-variable-stokes} for $m=0$.

Assume now that the result is known at order $m-1$ and let
$\boldsymbol f\in H^m$. Then
\[
 \bv\in H^{m+1},\qquad \pi\in H^m,\qquad \varpi=a^{-1}\pi\in H^m.
\]
Since $a\in H^{m+3}$ and $a$ is bounded away from zero, the Sobolev composition theorem gives $a^{-1}\in H^{m+3}$ with a norm controlled by the stated quantities. The product structure in \eqref{eq:stationary-reduced-stokes} is closed at order $m$. Indeed, in three dimensions multiplication by an $H^{m+2}$ function is bounded on $H^m$ for $m\ge1$. Since
\[
 a^{-1}\in H^{m+3},\qquad
 a^{-1}\nabla a\in H^{m+2},\qquad
 \cD(\bv)\in H^m,\qquad \varpi\in H^m,
\]
we have, with constants depending only on the stated coefficient bounds,
\[
\begin{aligned}
 \|a^{-1}\boldsymbol f\|_{H^m}
 &\le C\|\boldsymbol f\|_{H^m},\\
 \|a^{-1}\cD(\bv)\nabla a\|_{H^m}
 &\le C\|\bv\|_{H^{m+1}},\\
 \|\varpi\,a^{-1}\nabla a\|_{H^m}
 &\le C\|\varpi\|_{H^m}.
\end{aligned}
\]
Consequently,
\[
 \|\boldsymbol F\|_{H^m}
 \le C\bigl(
   \|\boldsymbol f\|_{H^m}
   +\|\bv\|_{H^{m+1}}
   +\|\varpi\|_{H^m}\bigr).
\]
The constant-coefficient Stokes estimate \cite[Theorem~IV.6.1]{Galdi} at order $m$ yields
\[
 \|\bv\|_{H^{m+2}}
 +\|\varpi-\overline \varpi\|_{H^{m+1}}
 \le C\|\boldsymbol F\|_{H^m}.
\]
As in the base case, $\overline \varpi$ is controlled by $\|\varpi\|_{L^2}$, already bounded at the preceding level. Using the induction hypothesis in the right-hand side and $\pi=a\varpi$ proves \eqref{eq:stationary-variable-stokes}. The induction is finite, so the dependence of the constant is exactly as stated.
\end{proof}

\begin{proposition}[High-order linear Stokes estimate with compatible jets]
\label{prop:linear-effective-stokes}
Assume that $\partial\Omega$ is of class $C^{s+3}$, $\boldsymbol h\in\mathbb F_{N,\tau}$ and \eqref{eq:linear-coefficient-class} holds. Suppose that the recursion \eqref{eq:linear-jet-recursion} satisfies \eqref{eq:linear-jet-compatibility}. Then the problem
\[
 \left\{
 \begin{aligned}
 r\partial_t\boldsymbol u+\mathcal L_a\boldsymbol u+\nabla p
 &=\boldsymbol h,
 &&\text{in }(0,\tau)\times\Omega,\\
 \diver\boldsymbol u&=0,
 &&\text{in }(0,\tau)\times\Omega,\\
 \boldsymbol u&=\boldsymbol0,
 &&\text{on }(0,\tau)\times\partial\Omega,\\
 \boldsymbol u(0)&=\boldsymbol U_0,
 \qquad \int_\Omega p(t)\,\mathrm d\bx=0,
 \end{aligned}
\right.
\]
has a unique solution with $\boldsymbol u\in\mathbb V_{N,\tau}$ and $\partial_t^j\boldsymbol u(0)=\boldsymbol U_j$ for $0\le j\le N$. Writing $p_j=\partial_t^jp$, one has
\begin{equation}\label{eq:linear-pressure-hierarchy}
\begin{gathered}
 p_j\in C H^{s_j-1}\cap L^2H^{s_j}
 \quad(0\le j\le N-1),
 \qquad
 p_N\in L^2H^1,\\[0.35em]
 \begin{aligned}
 &\|\boldsymbol u\|_{\mathbb V_{N,\tau}}
 +\sum_{j=0}^{N-1}\Bigl(
 \|p_j\|_{C H^{s_j-1}}+\|p_j\|_{L^2H^{s_j}}\Bigr)
 +\|p_N\|_{L^2H^1}\le C\left(
 \sum_{j=0}^N\|\boldsymbol U_j\|_{H^{s_j}}
 +\|\boldsymbol h\|_{\mathbb F_{N,\tau}}\right).
 \end{aligned}
\end{gathered}
\end{equation}
Here $C$ depends on $N$, $\Omega$, $\overline T$, $\bM_0$, the ellipticity bounds and an upper bound for $\mathfrak K_N$. It is uniform for $0<\tau\le\overline T$.

For zero initial velocity and an arbitrary force $\boldsymbol H\in L^2(0,\tau;H^{-1})$, the energy solution satisfies
\begin{equation}\label{eq:linear-energy-difference}
 \|\boldsymbol z\|_{\mathbb E_\tau}
 :=\|\boldsymbol z\|_{L^\infty L^2}
   +\|\boldsymbol z\|_{L^2H^1}
 \le C\|\boldsymbol H\|_{L^2H^{-1}}.
\end{equation}
If $\boldsymbol H\in\mathbb F_{N,\tau}$ and its initial jets satisfy \eqref{eq:linear-jet-recursion} with $\boldsymbol U_0=\cdots=\boldsymbol U_N=0$, then
\begin{equation}\label{eq:linear-zero-high}
 \|\boldsymbol z\|_{\mathbb V_{N,\tau}}
 \le C\|\boldsymbol H\|_{\mathbb F_{N,\tau}}.
\end{equation}
\end{proposition}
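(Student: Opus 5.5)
The plan is the standard Galerkin / time-differentiation hierarchy for the linear Stokes problem with time-dependent density and viscosity coefficients. Spatial regularity is recovered afterwards through \Cref{lem:stationary-variable-stokes}.

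\textbf{Step 1: energy estimate.} First prove \eqref{eq:linear-energy-difference}. Take a Galerkin approximation in eigenfunctions of the Stokes operator on $H_{0,\sigma}^1$ and test with $\boldsymbol z$. Use $r\partial_t\boldsymbol z\cdot\boldsymbol z=\tfrac12\partial_t(r|\boldsymbol z|^2)-\tfrac12 r_1|\boldsymbol z|^2$. Coercivity comes from $a\ge a_*$, Korn's inequality and $\bM_0\ge0$. The $r_1$ term is absorbed by Gronwall, using $\|r_1\|_{L^\infty}\le C\mathfrak K_N$. Uniqueness follows from the same bound. For the $L^2H^1$ part of $\partial_t\boldsymbol z$-level regularity, also test with $\partial_t\boldsymbol z$. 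This gives $\partial_t\boldsymbol z\in L^2L^2$ and $\boldsymbol z\in L^\infty H^1$ whenever the data are in $L^2L^2$ and the initial datum is in $H^1_{0,\sigma}$. The term $\int\partial_ta\,|\cD(\boldsymbol z)|^2$ is again handled by Gronwall.

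\textbf{Step 2: time-differentiated problems.} Formally, $\boldsymbol u_j=\partial_t^j\boldsymbol u$ satisfies
\[
 r\partial_t\boldsymbol u_j+\mathcal L_a\boldsymbol u_j+\nabla p_j
 =\boldsymbol h_j-\sum_{\ell=1}^j\binom j\ell\bigl(r_\ell\partial_t\boldsymbol u_{j-\ell}+\mathcal L_{a_\ell}\boldsymbol u_{j-\ell}\bigr),
\]
with initial value $\boldsymbol U_j$. By \eqref{eq:linear-jet-recursion}, these are exactly the jets produced by the equation at $t=0$. Compatibility \eqref{eq:linear-jet-compatibility} guarantees $\boldsymbol U_j\in H_\sigma^{s_j}$, in particular $\boldsymbol U_N\in H^1_{0,\sigma}$. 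This places the top equation in the Step~1 setting.

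I would make this rigorous at the Galerkin level. There I differentiate the ODE system, with the initial jets of the Galerkin solution converging to the projections of $\boldsymbol U_j$. I then apply the Step~1 $H^1$ estimate to $\boldsymbol u_N$ with source in $L^2L^2$. Controlling the commutator terms needs $\mathcal L_{a_\ell}\boldsymbol u_{j-\ell}\in L^2L^2$ and $r_\ell\boldsymbol u_{j-\ell+1}\in L^2L^2$. At the top level these involve lower-order derivatives, so the estimates close by downward/upward induction in $j$ together with Gronwall. This gives $\partial_t^N\boldsymbol u\in CH^1\cap L^2H^1$ and $\partial_t^{N+1}\boldsymbol u\in L^2L^2$.

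\textbf{Step 3: spatial bootstrap.} Proceed by descending $j$ from $N$ to $0$. At fixed $t$, write the $j$-th equation as a stationary problem
\[
 \mathcal L_a\boldsymbol u_j+\nabla p_j=\boldsymbol h_j-r\boldsymbol u_{j+1}-\text{(commutators)}.
\]
Apply \Cref{lem:stationary-variable-stokes} with $m=s_j-2$ for the $C H^{s_j}$ bound, and with $m=s_j-1$ for the $L^2H^{s_j+1}$ bound. The right-hand side involves $\boldsymbol u_{j+1}\in H^{s_{j+1}}=H^{s_j-2}$, already controlled at the previous stage, and lower $\boldsymbol u_{j-\ell}$ terms times coefficients $a_\ell\in H^{s_\ell+2}$. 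Products are handled by the $H^{m+2}\cdot H^m\subset H^m$ algebra, which is why $N\ge3$ is imposed. The coefficient regularity $a\in H^{s+2}$ suffices, since $m+3\le s_j+2$ for $j\ge0$.

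The estimates for $p_j$ come out of the same lemma, with $p_N\in L^2H^1$ obtained from $m=0$. Continuity in time follows from the $L^2$-in-time bounds on one higher time derivative via interpolation (Lions--Magenes). Existence follows by passing to the limit in the Galerkin scheme using these uniform bounds. \eqref{eq:linear-zero-high} is the special case with zero jets.

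\textbf{Main obstacle.} I expect the main obstacle to be Step~2. The Galerkin approximation must respect the compatibility conditions, with initial jets converging in the right norms, and the mixed commutator terms $r_\ell\partial_t\boldsymbol u_{j-\ell}$ must be bookkept at matching regularity levels. I would first handle this by constructing the jets exactly and subtracting a smooth lifting $\boldsymbol w(t)=\sum_j t^j\boldsymbol U_j/j!$. This reduces the problem to one with zero initial jets and a modified force in $\mathbb F_{N,\tau}$.
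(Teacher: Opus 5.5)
Your Step~1 is the paper's base estimate. Step~2, however, has a genuine gap, and the repair you propose does not close it.

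\textbf{Galerkin jets.} Differentiating the Galerkin system needs uniform $H^1$ bounds on the Galerkin jets $\partial_t^j\boldsymbol u^M(0)$. These are not the projections of $\boldsymbol U_j$. They are generated by the projected recursion, i.e.\ they are $r(0)$-weighted projections onto the Galerkin space $V_M$ of $r(0)^{-1}\bigl(\boldsymbol h_j(0)-\mathcal L_{a(0)}(\text{projected lower jets})-\cdots\bigr)$. You give no argument that they stay bounded in $H^1$. The conditions \eqref{eq:linear-jet-compatibility} concern $\boldsymbol U_j$, not the Galerkin jets, and the discrepancy involves $\mathcal L_{a(0)}(\boldsymbol U_0-P_M\boldsymbol U_0)$, which does not obviously become small in the required norm.

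\textbf{The polynomial lifting.} Subtracting $\boldsymbol w=\sum_j t^j\boldsymbol U_j/j!$ would indeed make all Galerkin jets vanish. But the new force $\boldsymbol h-r\partial_t\boldsymbol w-\mathcal L_a\boldsymbol w$ is not in $\mathbb F_{N,\tau}$. The hypotheses only give $\boldsymbol U_j\in H^{s_j}$, so $\mathcal L_a\boldsymbol U_N\in H^{-1}$. Yet $\mathbb F_{N,\tau}$ requires level $0$ in $CH^{s_0-2}=CH^{2N-1}$ and the $N$-th time derivative in $L^2L^2$, and $\mathcal L_a\boldsymbol U_N$ enters both. More generally, $t^j\mathcal L_a\boldsymbol U_j/j!\in H^{s_j-2}$ is too rough at level $0$ for every $j\ge1$. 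The polynomial lifting works in \Cref{thm:effective-local} only because there the jets are smooth. Here you would need a lifting in $\mathbb V_{N,\tau}$ itself (solenoidal, zero trace, prescribed jets), which is a nontrivial construction you do not supply.

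\textbf{What the paper does instead.} It avoids Galerkin differentiation. Each problem \eqref{eq:time-differentiated-stokes} is solved separately at the base level, with the exact datum $\boldsymbol U_j\in H_{0,\sigma}^1$. The same-level term $jr_1\bw_j$ is moved to the left as a bounded zeroth-order coefficient; note that your commutator $r_1\partial_t\boldsymbol u_{j-1}=r_1\boldsymbol u_j$ is not lower order. The paper then proves $\bw_j=\partial_t\bw_{j-1}$ by showing that $\boldsymbol Z_j=\boldsymbol U_{j-1}+\int_0^t\bw_j\,\mathrm d\sigma-\bw_{j-1}$ solves a homogeneous Volterra-type problem with zero data. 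The recursion \eqref{eq:linear-jet-recursion} is used exactly there, to make the residual $\mathfrak R_j$ vanish at $t=0$. This identification argument is the idea missing from your plan.

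\textbf{Ordering in Step~3.} Step~3 also does not close as you ordered it. Descending in $j$, the $j$-th equation contains $\mathcal L_{a_\ell}\boldsymbol u_{j-\ell}$ for $\ell\ge1$. To put the right-hand side in $CH^{s_j-2}$ (resp.\ $L^2H^{s_j-1}$) you need $\boldsymbol u_{j-\ell}\in CH^{s_j}$ (resp.\ $L^2H^{s_j+1}$). When you reach index $j$, however, the lower indices are known only in $CH^1\cap L^2H^2$ from Step~2; already at $j=N-1$ you need $\boldsymbol u_{N-2}\in CH^3$. The regularity must instead be raised two derivatives at a time for all indices. This is the paper's triangular induction \eqref{eq:triangular-spatial-induction}: spatial level $\ell=1,\dots,N$ in the outer loop, and $j=0,\dots,N-\ell$ ascending in the inner loop. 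Then $\boldsymbol u_{j+1}$ and $r_1\boldsymbol u_j$ are taken from level $\ell-1$, and the terms with smaller time index from level $\ell$.
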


\begin{proof}
We begin with
\[
 r\partial_t\bw+\mathcal L_a\bw+c\bw+\nabla\pi=\boldsymbol G,
 \qquad \bw(0)=\bw_0\in H_{0,\sigma}^1,
\]
with homogeneous Dirichlet data and a bounded scalar coefficient $c$. The following energy calculation is first performed for the finite-dimensional solenoidal Galerkin system. There $\partial_t\bw^M$ belongs to the trial space and is therefore an admissible test function. The resulting bounds are uniform in $M$ and pass to the weak limit. Testing the Galerkin equation by $\partial_t\bw^M$ and omitting the approximation index in the displayed a priori estimate gives
\[
 \begin{aligned}
 \int_\Omega r|\partial_t\bw|^2\,\mathrm d\bx
 &+\frac{\mathrm d}{\mathrm dt}\left[
       \int_\Omega a|\cD(\bw)|^2\,\mathrm d\bx
       +\frac12\int_\Omega a\bM_0\bw\cdot\bw\,\mathrm d\bx\right]\\
 &=\int_\Omega(\boldsymbol G-c\bw)\cdot\partial_t\bw\,\mathrm d\bx
   +\int_\Omega a_1|\cD(\bw)|^2\,\mathrm d\bx
   +\frac12\int_\Omega a_1\bM_0\bw\cdot\bw\,\mathrm d\bx.
 \end{aligned}
\]
The pressure term vanishes since $\partial_t\bw$ is solenoidal and has zero trace. We use the positivity of $r$ and $a$, the positive definiteness of $\bM_0$, and Korn's and Young's inequalities to obtain
\[
 \frac{\mathrm d}{\mathrm dt}\mathscr E_{\bw}(t)
 +c_1\|\partial_t\bw(t)\|_{L^2}^2
 \le C\mathscr E_{\bw}(t)+C\|\boldsymbol G(t)\|_{L^2}^2,
\]
where $\mathscr E_{\bw}$ is uniformly equivalent to $\|\bw\|_{H^1}^2$. Gronwall's lemma gives the $L^\infty H^1$ bound for $\bw$ and the $L^2L^2$ bound for $\partial_t\bw$. For a.e. $t\in(0,\tau)$, \Cref{lem:stationary-variable-stokes} with $m=0$ applies to the stationary equation with right-hand side $\boldsymbol G-r\partial_t\bw-c\bw$. Since the coefficient bounds in \eqref{eq:linear-coefficient-class} are stronger than those required by the lemma,
\begin{equation}\label{eq:linear-base-energy}
 \|\bw\|_{C H^1}+\|\bw\|_{L^2H^2}
 +\|\partial_t\bw\|_{L^2L^2}+\|\pi\|_{L^2H^1}
 \le C\bigl(\|\bw_0\|_{H^1}+\|\boldsymbol G\|_{L^2L^2}\bigr).
\end{equation}
Passing to the Galerkin limit gives existence at this level. We then recover the pressure from the solenoidal weak formulation by the de Rham characterization (see, e.g.~\cite[Theorem~III.5.3]{Galdi}). Moreover, $L^2H^2\cap H^1L^2$ and the corresponding trace theorem give continuity into $H^1$. For the constant-coefficient nonstationary Stokes estimate at this level, see \cite[Theorem~1.1, p.~333]{SolonnikovStokes}. The variable-viscosity spatial regularity used here is supplied by \Cref{lem:stationary-variable-stokes}. Testing by $\bw$ itself also gives
\[
 \|\bw\|_{\mathbb E_\tau}
 \le C\bigl(\|\bw_0\|_{L^2}
                 +\|\boldsymbol G\|_{L^2H^{-1}}\bigr).
\]
This estimate allows a bounded zeroth-order coefficient $c$ of either sign.

To construct the time-derivative hierarchy, introduce auxiliary fields $\bw_j$ corresponding to $\partial_t^j\boldsymbol u$. The differentiated equation has the form
\begin{equation}\label{eq:time-differentiated-stokes}
\begin{aligned}
 &r\partial_t\bw_j+\mathcal L_a\bw_j+jr_1\bw_j+\nabla p_j=\boldsymbol G_j,\\
 &\boldsymbol G_j:={\boldsymbol h_j}-\sum_{\ell=2}^j\binom j\ell r_\ell\bw_{j+1-\ell}-\sum_{\ell=1}^j\binom j\ell
       \mathcal L_{a_\ell}\bw_{j-\ell},
 \qquad 0\le j\le N,
\end{aligned}
\end{equation}
where empty sums are zero. All terms in $\boldsymbol G_j$, except $\boldsymbol h_j$, involve strictly lower time indices. Since $r_\ell\in L^\infty$ and $a_\ell\in L^\infty W^{1,\infty}$,
\[
 \|\boldsymbol G_j\|_{L^2L^2}
 \le \|\boldsymbol h_j\|_{L^2L^2}
     +C\sum_{k<j}\bigl(\|\bw_k\|_{L^2H^2}
                         +\|\bw_k\|_{L^2L^2}\bigr).
\]
Equation \eqref{eq:time-differentiated-stokes} is solved successively for $j=0,\ldots,N$ with initial values $\boldsymbol U_j$. Since the coefficient $jr_1$ is bounded uniformly for the finite range of $j$, \eqref{eq:linear-base-energy} applies at each level and yields
\begin{equation}\label{eq:all-time-base-bounds}
\begin{aligned}
 \sum_{j=0}^N\Bigl(
 \|\bw_j\|_{C H^1}+\|\bw_j\|_{L^2H^2}
 +\|\partial_t\bw_j\|_{L^2L^2}+\|p_j\|_{L^2H^1}\Bigr)
 \le C\left(
 \sum_{j=0}^N\|\boldsymbol U_j\|_{H^1}
 +\sum_{j=0}^N\|\boldsymbol h_j\|_{L^2L^2}\right).
\end{aligned}
\end{equation}

To identify the auxiliary fields with the time derivatives, work in the solenoidal weak formulation. Assume inductively that $\bw_k=\partial_t^k\boldsymbol u$ for $k<j$, and set
\[
 \boldsymbol V_{j-1}(t)=\boldsymbol U_{j-1}
             +\int_0^t\bw_j(\sigma)\,\mathrm d\sigma,
 \qquad
 \boldsymbol Z_j=\boldsymbol V_{j-1}-\bw_{j-1}.
\]
For $\bphi\in H_{0,\sigma}^1(\Omega)$, define the level-$(j-1)$ residual of $\boldsymbol V_{j-1}$ by
\[
 \begin{aligned}
 \langle\mathfrak R_j(t),\bphi\rangle
 :={}&\int_\Omega r\,\partial_t\boldsymbol V_{j-1}\cdot\bphi
       +2a\cD(\boldsymbol V_{j-1}):\cD(\bphi)
       +a\bM_0\boldsymbol V_{j-1}\cdot\bphi\\
 &\quad +(j-1)r_1\boldsymbol V_{j-1}\cdot\bphi
       -\boldsymbol G_{j-1}\cdot\bphi\,\mathrm d\bx.
 \end{aligned}
\]
The compatibility recursion \eqref{eq:linear-jet-recursion} implies
$\mathfrak R_j(0)=0$ in the dual of $H_{0,\sigma}^1(\Omega)$: at $t=0$ the full residual is the gradient $-\nabla P_{j-1}$ and therefore vanishes on solenoidal test fields. The cancellation
\[
 \partial_t\boldsymbol G_{j-1}
 =\boldsymbol G_j+\mathcal L_{a_1}\bw_{j-1}
   +(j-1)r_2\bw_{j-1}
\]
and the level-$j$ equation in \eqref{eq:time-differentiated-stokes} give, in the same dual space,
\[
 \partial_t\mathfrak R_j
 =\bigl(\mathcal L_{a_1}+(j-1)r_2\bigr)\boldsymbol Z_j.
\]
Hence
\[
 \mathfrak R_j(t)
 =\int_0^t\bigl(\mathcal L_{a_1(\sigma)}
    +(j-1)r_2(\sigma)\bigr)\boldsymbol Z_j(\sigma)
                    \,\mathrm d\sigma.
\]
Subtracting the solenoidal weak equation for $\bw_{j-1}$ shows that $\boldsymbol Z_j(0)=0$ and
\[
 r\partial_t\boldsymbol Z_j+\mathcal L_a\boldsymbol Z_j
 +(j-1)r_1\boldsymbol Z_j
 =\int_0^t\bigl(\mathcal L_{a_1(\sigma)}
    +(j-1)r_2(\sigma)\bigr)\boldsymbol Z_j(\sigma)
                    \,\mathrm d\sigma
\]
in the dual of $H_{0,\sigma}^1(\Omega)$. The operator
$\mathcal L_{a_1}+(j-1)r_2$ maps $H_0^1(\Omega)$ continuously into $H^{-1}(\Omega)$ under \eqref{eq:linear-coefficient-class}. Therefore, on an interval of length $\tau_0$, the pressure-free energy estimate corresponding to \eqref{eq:linear-energy-difference} gives
\[
 \|\boldsymbol Z_j\|_{\mathbb E_{\tau_0}}
 \le C\left\|\int_0^t
       \|\boldsymbol Z_j(\sigma)\|_{H^1}\,\mathrm d\sigma
       \right\|_{L^2(0,\tau_0)}
 \le C\tau_0\|\boldsymbol Z_j\|_{L^2(0,\tau_0;H^1)}.
\]
Choosing $\tau_0$ so that $C\tau_0<1$ yields $\boldsymbol Z_j=0$ on the first interval, and repetition on consecutive intervals gives $\boldsymbol Z_j=0$ on $[0,\tau]$. Thus $\bw_j=\partial_t\bw_{j-1}$ distributionally. Differentiating the level-$(j-1)$ equation in distributions and comparing it with the level-$j$ equation then gives
$\nabla(p_j-\partial_t p_{j-1})=0$. Since both pressure representatives have zero spatial mean, $p_j=\partial_t p_{j-1}$ in distributions. Induction identifies all derivatives up to order $N$ and recovers the specified values $\partial_t^j\boldsymbol u(0)=\boldsymbol U_j$.

Spatial regularity is recovered by an elliptic bootstrap. At any fixed time, write
\begin{equation}\label{eq:elliptic-jet-system}
 \mathcal L_a\bw_j+\nabla p_j
 =\boldsymbol h_j-r\bw_{j+1}
 -\sum_{\ell=1}^j\binom j\ell
       \bigl(r_\ell\bw_{j+1-\ell}
              +\mathcal L_{a_\ell}\bw_{j-\ell}\bigr),
 \quad j<N.
\end{equation}
For $0\le k\le s-1$, \Cref{lem:stationary-variable-stokes} applies to \eqref{eq:elliptic-jet-system} since $a(t)\in H^{s+2}\subset H^{k+3}$ and $\partial\Omega$ is of class $C^{s+3}$. Hence
\[
 \|\bv\|_{H^{k+2}}+\|\pi\|_{H^{k+1}}
 \le C\bigl(\|\mathcal L_a\bv+\nabla\pi\|_{H^k}
                  +\|\bv\|_{H^1}\bigr),
 \qquad \bv\in H_{0,\sigma}^1,\quad
 \int_\Omega\pi\,\mathrm d\bx=0,
\]
with a constant uniform in time. For the differentiated coefficient terms on the right-hand side, the Sobolev product estimates give
$\|\mathcal L_b\bv\|_{H^k}\le C\|b\|_{H^{k+2}}\|\bv\|_{H^{k+2}}$ at every order needed in the bootstrap. The coefficient class \eqref{eq:linear-coefficient-class} supplies the required coefficient norms.

Starting from \eqref{eq:all-time-base-bounds}, consider the triangular induction
\begin{equation}\label{eq:triangular-spatial-induction}
 \ell=1,\ldots,N,
 \qquad j=0,1,\ldots,N-\ell,
 \qquad
 \bw_j\in C H^{2\ell+1}\cap L^2H^{2\ell+2}.
\end{equation}
Fix $\ell$ and assume that the induction has been established for all spatial levels $1,\ldots,\ell-1$ and, at level $\ell$, for all time indices $k<j$. From \eqref{eq:all-time-base-bounds} when $\ell=1$, and from level $\ell-1$ when $\ell\ge2$,
\[
 \bw_{j+1},\;r_1\bw_j
 \in C H^{2\ell-1}\cap L^2H^{2\ell}.
\]
Every other coefficient term in \eqref{eq:elliptic-jet-system} contains a velocity with a strictly smaller time index, so the induction in $j$, together with the Sobolev product estimates and \eqref{eq:linear-coefficient-class}, gives the same regularity for those terms.

For the force, $j\le N-\ell$ implies
\[
 s_j=2N+1-2j\ge2\ell+1,
 \qquad
 s_j-2\ge2\ell-1,
 \qquad
 s_j-1\ge2\ell.
\]
Hence the complete right-hand side of \eqref{eq:elliptic-jet-system} belongs to
\[
 C H^{2\ell-1}\cap L^2H^{2\ell}.
\]
Applying \Cref{lem:stationary-variable-stokes} with $m=2\ell-1$ and with $m=2\ell$ at each fixed time gives $\bw_j\in L^\infty(0,\tau;H^{2\ell+1})\cap L^2(0,\tau;H^{2\ell+2})$ and the corresponding pressure bounds $p_j\in L^2(0,\tau;H^{2\ell+1})$.
The upgrade to continuity in time is obtained by interpolation. By the induction hypothesis at level $\ell-1$ we have $\bw_j\in L^2(0,\tau;H^{2\ell+2})$ and $\partial_t\bw_j=\bw_{j+1}\in L^2(0,\tau;H^{2\ell})$, and $[H^{2\ell+2}(\Omega),H^{2\ell}(\Omega)]_{1/2}=H^{2\ell+1}(\Omega)$, so the Lions--Magenes time-continuity theorem gives $\bw_j\in C([0,\tau];H^{2\ell+1})$. The same argument applied to $p_j\in L^2(0,\tau;H^{2\ell+1})$ with $\partial_tp_j=p_{j+1}\in L^2(0,\tau;H^{2\ell-1})$ gives $p_j\in C([0,\tau];H^{2\ell})$.
 Taking $\ell=N-j$ gives $2\ell+1=s_j$, so \eqref{eq:triangular-spatial-induction} and \eqref{eq:all-time-base-bounds} imply the full hierarchy \eqref{eq:linear-pressure-hierarchy}.

For the zero-data estimate, test the undifferentiated equation by $\boldsymbol z$ and differentiate the weighted kinetic energy:
\[
 \frac12\frac{\mathrm d}{\mathrm dt}
       \int_\Omega r|\boldsymbol z|^2\,\mathrm d\bx
 +2\int_\Omega a|\cD(\boldsymbol z)|^2\,\mathrm d\bx
 +\int_\Omega a\bM_0\boldsymbol z\cdot\boldsymbol z\,\mathrm d\bx
 =\langle\boldsymbol H,\boldsymbol z\rangle
   +\frac12\int_\Omega r_1|\boldsymbol z|^2\,\mathrm d\bx.
\]
Using the bounds on $r_1$ together with Korn's and Young's inequalities, we apply Gronwall's lemma to obtain \eqref{eq:linear-energy-difference} and uniqueness. For compatible zero jets, \eqref{eq:linear-zero-high} is the zero-data specialization of \eqref{eq:linear-pressure-hierarchy}. This completes the proof of the proposition.
\end{proof}

The nonlinear compatibility conditions are encoded through the initial time jets. Given smooth initial fields $\rho_0,\boldsymbol u_0$ and a smooth force $\bfv$, write
\[
 R_0=\rho_0,\qquad \boldsymbol U_0=\boldsymbol u_0,
 \qquad \boldsymbol F_j=\partial_t^j\bfv(0).
\]
For $j=0,\ldots,N-1$, use the fields already determined to set
\begin{equation}\label{eq:nonlinear-density-jets}
\begin{aligned}
 R_{j+1}=-\sum_{\ell=0}^j\binom j\ell \boldsymbol U_\ell\cdot\nabla R_{j-\ell},\quad  \Theta_j=\Theta[\boldsymbol U_j],\quad A_j
 =\left.\frac{\mathrm d^j}{\mathrm dt^j}\mu\!\left(\sum_{\ell=0}^j\frac{t^\ell}{\ell!}\Theta_\ell\right) \right|_{t=0}, \quad 0\le j<N.
\end{aligned}
\end{equation}
Define
\[
 \begin{aligned}
  \boldsymbol K_j:=&\sum_{\ell=0}^j\binom j\ell R_\ell\boldsymbol F_{j-\ell}-\Theta_j\bg-\sum_{\ell+k+n=j}\frac{j!}{\ell!k!n!} R_\ell(\boldsymbol U_k\cdot\nabla)\boldsymbol U_n\\
 &-\sum_{\ell=0}^j\binom j\ell \mathcal L_{A_\ell}\boldsymbol U_{j-\ell}-\sum_{\ell=1}^j\binom j\ell R_\ell\boldsymbol U_{j+1-\ell},
 \end{aligned}
 \]
with
\begin{equation}\label{eq:nonlinear-momentum-jets}
\begin{gathered}
 \left\{
 \begin{aligned}
 \rho_0\boldsymbol U_{j+1}+\nabla P_j&=\boldsymbol K_j
     &&\text{in }\Omega,\\
 \diver\boldsymbol U_{j+1}&=0
     &&\text{in }\Omega,\\
 \boldsymbol U_{j+1}\cdot\bn&=0
     &&\text{on }\partial\Omega,\\
 \int_\Omega P_j\,\mathrm d\bx&=0,
 \end{aligned}
 \right.
 \qquad 0\le j<N.
\end{gathered}
\end{equation}
The weighted Neumann construction following \eqref{eq:linear-jet-recursion} determines $P_j$ and $\boldsymbol U_{j+1}$. After obtaining $\boldsymbol U_N$, also set $\Theta_N=\Theta[\boldsymbol U_N]$ and define $A_N$ by the same composition formula. Smooth data give smooth jets. Their full boundary traces are then imposed through the compatibility condition \eqref{eq:nonlinear-jet-compatibility}. These recursively generated jets define the finite compatibility class used in the nonlinear fixed-point construction.

\begin{theorem}[Local regular solvability for compatible data]
\label{thm:effective-local}
Let $m\ge5$ and $\overline T>0$. Choose $N=\lceil(m+1)/2\rceil$. With the initial jets defined by \eqref{eq:nonlinear-density-jets}--\eqref{eq:nonlinear-momentum-jets},
\[
 s=2N+1\ge m+2,\qquad s_1=s-2\ge m,\qquad s_N=1.
\]
Assume that $\partial\Omega$, $F$, $\rho_0$, $\boldsymbol u_0$ and $\bfv$ are smooth up to their respective boundaries, with
\[
 \bfv\in C^\infty([0,\overline T]\times\overline\Omega;\R^3),
 \qquad \rhomin\le\rho_0\le\rhomax,
 \qquad \diver\boldsymbol u_0=0,
 \qquad \operatorname{Tr}\boldsymbol u_0=\boldsymbol0.
\]
Let $\mu\in C^\infty(\R)$ satisfy \eqref{eq:coefficients}. Form the initial jets by \eqref{eq:nonlinear-density-jets}--\eqref{eq:nonlinear-momentum-jets} and assume the finite hierarchy
\begin{equation}\label{eq:nonlinear-jet-compatibility}
 \operatorname{Tr}\boldsymbol U_j=\boldsymbol0,
 \qquad 1\le j\le N.
\end{equation}
Then there exists $T_*\in(0,\overline T]$ and a unique effective solution of \eqref{eq:effective-intro} with the conditions \eqref{eq:effective-boundary} and initial data \eqref{eq:effective-initial} such that, with $s=2N+1$,
\begin{equation}\label{eq:effective-high-regularity}
\begin{aligned}
 \rhof&\in C([0,T_*];H^{s+1})\cap C^1([0,T_*];H^s),
 &\qquad \uf&\in\mathbb V_{N,T_*},\\
 \tf&\in C([0,T_*];H^{s+2}),
 &\pf&\in C([0,T_*];H^{s-1}).
\end{aligned}
\end{equation}
The density remains between $\rhomin$ and $\rhomax$, and the solution is unique in the displayed class. Moreover,
\[
 \uf\in L^\infty(0,T_*;H^{m+2})
        \cap W^{1,\infty}(0,T_*;H^m),
 \qquad \pf\in L^\infty(0,T_*;H^{m+1}).
\]
\end{theorem}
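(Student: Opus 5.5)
The proof is a contraction-mapping argument in the high-order class $\mathbb V_{N,\tau}$. Each iteration freezes the nonlinear coefficients and solves the linear problem of \Cref{prop:linear-effective-stokes}. Fix $N$ and $s=2N+1$ as in the statement. For $R>0$ and small $\tau$, consider the set $\mathcal M_{R,\tau}$ of fields $\bv\in\mathbb V_{N,\tau}$ with $\|\bv\|_{\mathbb V_{N,\tau}}\le R$ and $\partial_t^j\bv(0)=\boldsymbol U_j$ for $0\le j\le N$, where the jets are those generated by \eqref{eq:nonlinear-density-jets}--\eqref{eq:nonlinear-momentum-jets}. This set is nonempty: a lifting of the compatible jets (a time-Taylor polynomial smoothed in space and projected to solenoidal fields, or simply the linear solution with these data) belongs to it.

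Given $\bv\in\mathcal M_{R,\tau}$, the map $\Phi(\bv)=\bu$ is built in four steps.

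1. Solve the transport equation $\partial_tr+\bv\cdot\nabla r=0$, $r(0)=\rho_0$, by characteristics. Since $\bv$ vanishes on $\partial\Omega$, the flow preserves $\overline\Omega$ and the bounds $\rhomin\le r\le\rhomax$. Commutator estimates give $r\in CH^{s+1}$ and $r_j=\partial_t^jr\in CH^{s_j+2}$. The $H^{s+1}$ transport estimate costs $\|\nabla\bv\|_{L^1H^{s}}\lesssim\tau^{1/2}\|\bv\|_{L^2H^{s+1}}$, which is exactly the top regularity in $\mathbb V_{N,\tau}$. Time derivatives are read off from the equation, $r_{j+1}=-\sum\binom{j}{\ell}\partial_t^\ell\bv\cdot\nabla r_{j-\ell}$.
2. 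Set $\vartheta=\Theta[\bv]$ using \eqref{eq:appendix-theta-bound}. This gives $\partial_t^j\vartheta\in CH^{s_j+2}$.
3. Set $a=\mu(\vartheta)$. Composition estimates in $H^{k}$, $k\ge2$, with $\mu\in C^\infty$, show that $a$ satisfies \eqref{eq:linear-coefficient-class}, with $\mathfrak K_N\le C(R)$ and $a\ge\mumin$.
4. Take the force $\boldsymbol h=r\bfv-\vartheta\bg-r(\bv\cdot\nabla)\bv$ and let $\bu$ solve the linear problem with coefficients $(r,a)$ and initial datum $\boldsymbol u_0$.

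By construction, the linear jet recursion \eqref{eq:linear-jet-recursion} evaluated on $(r,a,\boldsymbol h)$ at $t=0$ reproduces \eqref{eq:nonlinear-momentum-jets}. This is because the time derivatives of $r,a,\boldsymbol h$ at $0$ depend only on the jets $\boldsymbol U_\ell$ of $\bv$, and these coincide with those defining $R_\ell,A_\ell$. Hence \eqref{eq:nonlinear-jet-compatibility} is exactly \eqref{eq:linear-jet-compatibility}, and \Cref{prop:linear-effective-stokes} applies. It yields $\Phi(\bv)$ with the prescribed jets and the bound $\|\Phi(\bv)\|_{\mathbb V}\le C(\mathfrak K_N)\bigl(\sum_j\|\boldsymbol U_j\|_{H^{s_j}}+\|\boldsymbol h\|_{\mathbb F_{N,\tau}}\bigr)$.

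\textbf{Self-map.} The main obstacle is to show that $\Phi$ maps $\mathcal M_{R,\tau}$ into itself. The constant $C$ depends on $R$ through $\mathfrak K_N$, so I must extract a small factor from $\tau$ rather than from the coefficients. I would split each coefficient as its initial-jet Taylor polynomial plus a remainder. The Taylor part is controlled by the data alone. The remainder is $O(\tau^{1/2}C(R))$, by writing $f(t)-\sum_{\ell<k}t^\ell f_\ell(0)/\ell!=\int_0^t\cdots$ and using the $L^2$-in-time control of one more time derivative. The same device applies to the force in $\mathbb F_{N,\tau}$: the $C H^{s_j-2}$ norms of $\boldsymbol h_j$ split into their value at $t=0$, which is data, plus $\tau^{1/2}C(R)$, and the $L^2H^{s_j-1}$ norms are bounded by $\tau^{1/2}$ times $CH^{s_j-1}$ norms. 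The top term $\boldsymbol h_N\in L^2L^2$ contains $\partial_t^N\bv\cdot\nabla\bv$, controlled by $\|\partial_t^N\bv\|_{CH^1}\|\nabla\bv\|_{CL^3}$ with a $\tau^{1/2}$ gain. Consequently $\|\Phi(\bv)\|\le C_0(\text{data})+\tau^{\theta}C(R)$. I choose $R=2C_0$ and then $\tau$ small. Care is needed because $C(\mathfrak K_N)$ multiplies the data term; to avoid this circularity I would first run the linear estimate with coefficients frozen at the $\mathfrak K_N$-bound implied by the Taylor part plus $1$, which holds for small $\tau$.

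\textbf{Contraction and conclusion.} I would prove contraction in the low norm $\mathbb E_\tau$ of \eqref{eq:linear-energy-difference}. The difference $\boldsymbol z=\Phi(\bv_1)-\Phi(\bv_2)$ solves the linear system with coefficients $(r_1,a_1)$ and force $\boldsymbol H=-(r_1-r_2)\partial_t\bu_2-\mathcal L_{a_1-a_2}\bu_2+(\boldsymbol h_1-\boldsymbol h_2)$. Here the velocity-difference, velocity-error and density-difference bounds come from $L^2$ transport stability, $\|r_1-r_2\|_{L^\infty L^2}\le C\tau^{1/2}\|\bv_1-\bv_2\|_{L^2H^1}$, together with the Lipschitz bounds for $\Theta$ and $\mu$. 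This gives $\|\boldsymbol z\|_{\mathbb E_\tau}\le C(R)\tau^{1/2}\|\bv_1-\bv_2\|_{\mathbb E_\tau}$. Bounded sets of $\mathbb V_{N,\tau}$ are weakly-$\ast$ closed in $\mathbb E_\tau$, so Banach's theorem in the complete metric space $(\mathcal M_{R,\tau},\|\cdot\|_{\mathbb E_\tau})$ gives a fixed point. The regularity \eqref{eq:effective-high-regularity} and the pressure bound come from \eqref{eq:linear-pressure-hierarchy}. The density bounds come from the maximum principle along characteristics. Uniqueness in the class follows from the same difference estimate. Finally, $s\ge m+2$ and $s_1\ge m$ give the stated $L^\infty H^{m+2}\cap W^{1,\infty}H^m$ bounds on $\uf$ and $\pf\in L^\infty H^{m+1}$, and hence \Cref{ass:strong} holds by Sobolev embedding, since $m\ge5$.
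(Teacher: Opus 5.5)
Your construction of the map (transport, $\Theta$, $\mu$, jet matching), the $\mathbb E_\tau$ contraction on the closed ball, uniqueness and the final embedding all agree with the paper. The self-map step, which you correctly identify as the crux, has a genuine gap.

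Your way around the circularity rests on the claim that, for small $\tau$, the coefficient quantity $\mathfrak K_N$ of $(r_{\bv},a_{\bv})$ is bounded by its Taylor-part value plus one. This fails uniformly on the $R$-ball. $\mathfrak K_N$ contains top-order sup-in-time norms. Examples are $\|a_{\bv}\|_{CH^{s+2}}$, which \eqref{eq:appendix-theta-bound} controls only through $\|\bv\|_{CH^{s}}$, and $\|r_1\|_{CH^{s}}=\|\bv\cdot\nabla r_{\bv}\|_{CH^{s}}$.

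The Taylor-remainder device gains smallness only in the norm of the next time derivative, and in this parabolic hierarchy that norm is lower in space. For instance, $\bv(t)-\boldsymbol U_0=\int_0^t\partial_t\bv$ is small in $H^{s-1}$, because $\partial_t\bv\in L^2H^{s-1}$, but not in $H^{s}$. In fact the $\mathbb V_{N,\tau}$ norm is invariant under parabolic rescaling. So for every $\tau$ the ball contains fields $\bv^{\mathrm{ref}}+\phi(t/\tau)\boldsymbol w$, with $\phi^{(j)}(0)=0$ for $j\le N$ and $\boldsymbol w$ oscillating at frequency $\tau^{-1/2}$, for which $\|\bv(\tau)-\bv^{\mathrm{ref}}(\tau)\|_{H^{s}}\sim R$.

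Hence $\mathfrak K_N$, and with it the linear constant, is only bounded by $C(R)$. The estimate $\|\Phi(\bv)\|\le C(\mathfrak K_N)\bigl(\sum_j\|\boldsymbol U_j\|_{H^{s_j}}+\|\boldsymbol h\|_{\mathbb F_{N,\tau}}\bigr)$ then cannot be closed with $R=2C_0$, since an $R$-dependent constant multiplies order-one data. Your bound $\|\boldsymbol h\|_{\mathbb F_{N,\tau}}\le C(\text{data})+C(R)\tau^{1/2}$ does not help, for the same reason.

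The missing idea is to remove the data before applying the linear estimate. Subtract the reference lift $\bv^{\mathrm{ref}}=\sum_{j\le N}t^j\boldsymbol U_j/j!$ and $p^{\mathrm{ref}}=\sum_{j<N}t^jP_j/j!$. Then $\boldsymbol z=\Phi(\bv)-\bv^{\mathrm{ref}}$ solves the linear problem with coefficients $(r_{\bv},a_{\bv})$, zero initial jets, and forcing $\boldsymbol H_{\bv}$ as in \eqref{eq:self-map-force-jets}. The time derivatives of $\boldsymbol H_{\bv}$ of order $j<N$ vanish at $t=0$ exactly by \eqref{eq:nonlinear-momentum-jets}; the $\nabla p^{\mathrm{ref}}$ term is needed for this cancellation.

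Integrating from $t=0$ and interpolating with the uniform bounds \eqref{eq:self-map-force-uniform} gives $\|\boldsymbol H_{\bv}\|_{\mathbb F_{N,\tau}}\le C_R\tau^{1/2}$. Then \eqref{eq:linear-zero-high} yields $\|\Phi(\bv)\|_{\mathbb V_{N,\tau}}\le\|\bv^{\mathrm{ref}}\|_{\mathbb V_{N,\tau}}+C(\mathfrak K_N)C_R\tau^{1/2}\le C_0+C_R'\tau^{1/2}$. Now the $R$-dependent constant multiplies only a small quantity, and the data enter through $\bv^{\mathrm{ref}}$ with constant one.

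You already have both ingredients: the Taylor lift, which you use only for nonemptiness, and the integrate-and-interpolate argument, which you use for $\boldsymbol h_j$. The smallness has to be applied to the jet-free residual $\boldsymbol H_{\bv}$, not to $\boldsymbol h$ itself.
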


\begin{proof}
A high-regularity invariant set is combined with contraction in the lower energy norm \eqref{eq:linear-energy-difference}. Since the initial jets $\boldsymbol U_0,\ldots,\boldsymbol U_N$ and $P_0,\ldots,P_{N-1}$ are smooth, the reference polynomials
\begin{equation}\label{eq:compatible-reference-polynomials}
 \bv^{\mathrm{ref}}(t)=\sum_{j=0}^N\frac{t^j}{j!}\boldsymbol U_j,
 \qquad
 p^{\mathrm{ref}}(t)=\sum_{j=0}^{N-1}\frac{t^j}{j!}P_j
\end{equation}
belong to the required spaces on $[0,\overline T]$. For $0<\tau\le\min\{1,\overline T\}$, let $C_0\ge\|\bv^{\mathrm{ref}}\|_{\mathbb V_{N,\tau}}$ be fixed by the data. For $R>2C_0$, define
\begin{equation}\label{eq:compatible-picard-set}
 \mathbb X_{\tau,R}
 :=\left\{\bv\in\mathbb V_{N,\tau}:
       \partial_t^j\bv(0)=\boldsymbol U_j\ (0\le j\le N),
       \quad\|\bv\|_{\mathbb V_{N,\tau}}\le R\right\}.
\end{equation}
The set $\mathbb X_{\tau,R}$ is nonempty since it contains $\bv^{\mathrm{ref}}$ whenever $R>C_0$.

For $\bv\in\mathbb X_{\tau,R}$, solve
\[
 \partial_t r_{\bv}+\bv\cdot\nabla r_{\bv}=0,
 \qquad r_{\bv}(0)=\rho_0.
\]
The flow is well defined, fixes the boundary and preserves volume. The transport estimate at order $s+1$ gives
\begin{equation}\label{eq:high-transport-bound}
 \|r_{\bv}(t)\|_{H^{s+1}}
 \le\|\rho_0\|_{H^{s+1}}
       \exp\!\left(C\int_0^t\|\bv(\sigma)\|_{H^{s+1}}
                                      \,\mathrm d\sigma\right)
 \le C_R,
 \qquad \rhomin\le r_{\bv}\le\rhomax.
\end{equation}
Here $\int_0^\tau\|\bv\|_{H^{s+1}}\,\mathrm dt\le\tau^{1/2}R$. Time differentiation gives the exact recursion
\[
 \partial_t^j r_{\bv}
 =-\sum_{\ell=0}^{j-1}\binom{j-1}{\ell}
       \partial_t^\ell\bv\cdot
       \nabla\partial_t^{j-1-\ell}r_{\bv},
 \quad 1\le j\le N.
\]
Starting from \eqref{eq:high-transport-bound}, Sobolev multiplication therefore yields
\begin{equation}\label{eq:nonlinear-coefficient-bounds}
\begin{gathered}
 r_{\bv}\in C H^{s+1},\qquad
 \sum_{j=1}^N\|\partial_t^j r_{\bv}\|_{C H^{s_j+2}}\le C_R,
 \qquad \theta_{\bv}=\Theta[\bv],\\
 a_{\bv}=\mu(\theta_{\bv}),\qquad
 \sum_{j=0}^N\|\partial_t^j a_{\bv}\|_{C H^{s_j+2}}\le C_R.
\end{gathered}
\end{equation}
For the viscosity bounds, use $\partial_t^j\theta_{\bv}=\Theta[\partial_t^j\bv]$ and the finite chain rule for $\mu$. The lowest order is $\partial_t^Na_{\bv}\in C H^3$, and $H^3(\Omega)$ embeds continuously into $W^{1,\infty}(\Omega)$. Thus these coefficients satisfy \eqref{eq:linear-coefficient-class}. Their initial derivatives are $R_j$ and $A_j$, independently of the choice of $\bv$ in the set.

Define $\mathscr S\bv=\boldsymbol u$ by
\begin{equation}\label{eq:effective-linearized-stokes}
 r_{\bv}\partial_t\boldsymbol u
 +\mathcal L_{a_{\bv}}\boldsymbol u+\nabla p
 =\boldsymbol h_{\bv},
 \qquad
 \boldsymbol h_{\bv}
 =r_{\bv}\bfv-\theta_{\bv}\bg
             -r_{\bv}(\bv\cdot\nabla)\bv,
\end{equation}
with $\diver\boldsymbol u=0$, homogeneous Dirichlet data, $\boldsymbol u(0)=\boldsymbol U_0$ and zero-mean pressure. The coefficient bounds and Sobolev product estimates place $\boldsymbol h_{\bv}$ in $\mathbb F_{N,\tau}$. By \eqref{eq:nonlinear-momentum-jets}, the linear compatibility recursion has precisely the specified jets $\boldsymbol U_0,\ldots,\boldsymbol U_N$. Hence \Cref{prop:linear-effective-stokes} defines $\mathscr S$ and preserves these jets.

The reference polynomials \eqref{eq:compatible-reference-polynomials} allow the high-order bound in \eqref{eq:compatible-picard-set} to be closed with a small-time factor. Set $\boldsymbol z=\mathscr S\bv-\bv^{\mathrm{ref}}$ and $\pi=p-p^{\mathrm{ref}}$. The pair $(\boldsymbol z,\pi)$ satisfies the corresponding linear equation with zero initial jets through order $N$ and forcing
\begin{equation}\label{eq:self-map-force-jets}
 \boldsymbol H_{\bv}
 :=r_{\bv}\bfv-\theta_{\bv}\bg-r_{\bv}(\bv\cdot\nabla)\bv
   -r_{\bv}\partial_t\bv^{\mathrm{ref}}
   -\mathcal L_{a_{\bv}}\bv^{\mathrm{ref}}
   -\nabla p^{\mathrm{ref}}.
\end{equation}
The jet identities give
\begin{equation}\label{eq:self-map-zero-jets}
 \partial_t^j\boldsymbol H_{\bv}(0)=\boldsymbol0,
 \qquad 0\le j<N.
\end{equation}
Differentiation of \eqref{eq:self-map-force-jets} gives the following bounds before taking a small-time factor:
\begin{equation}\label{eq:self-map-force-uniform}
 \max_{0\le j<N}
       \|\partial_t^j\boldsymbol H_{\bv}\|_{C H^{s_j-1}}
 +\|\partial_t^N\boldsymbol H_{\bv}\|_{C L^2}
 \le C_R.
\end{equation}
Indeed, the convection terms are exactly
\[
 \partial_t^j\bigl(r_{\bv}(\bv\cdot\nabla)\bv\bigr)
 =\sum_{\ell+k+n=j}\frac{j!}{\ell!k!n!}
       (\partial_t^\ell r_{\bv})
       (\partial_t^k\bv\cdot\nabla)\partial_t^n\bv.
\]
For $j<N$ the highest time index contributes one spatial derivative of an $H^{s_j}$ field, giving $H^{s_j-1}$. At $j=N$, at most one factor in each Leibniz term carries the top time derivative. If that factor is not spatially differentiated, it lies in $H^1$. When the top derivative falls on the advected velocity, its gradient lies in $L^2$. Every remaining factor is lower order and bounded. The differentiated stress is
\[
 \partial_t^j\bigl(\mathcal L_{a_{\bv}}\bv^{\mathrm{ref}}\bigr)
 =\sum_{\ell=0}^j\binom j\ell
       \mathcal L_{\partial_t^\ell a_{\bv}}
                        \partial_t^{j-\ell}\bv^{\mathrm{ref}},
\]
and has the same required bounds since the reference polynomials are smooth and \eqref{eq:nonlinear-coefficient-bounds} holds. All remaining products contain at most these derivative orders. In particular, $\partial_t^{N+1}\bv^{\mathrm{ref}}=0$, so \eqref{eq:self-map-force-uniform} requires no extra time derivative of the iterate.

For $j<N-1$, integrate $\partial_t^{j+1}\boldsymbol H_{\bv}$ in $H^{s_j-3}$ and interpolate with $H^{s_j-1}$. For $j=N-1$, use $\partial_t^N\boldsymbol H_{\bv}\in C L^2$ and interpolate with $H^2$. In both cases \eqref{eq:self-map-zero-jets} gives
\[
 \|\partial_t^j\boldsymbol H_{\bv}\|_{C H^{s_j-2}}
 \le C\|\partial_t^j\boldsymbol H_{\bv}\|_{C H^{s_j-3}}^{1/2}
       \|\partial_t^j\boldsymbol H_{\bv}\|_{C H^{s_j-1}}^{1/2}
 \le C_R\tau^{1/2}.
\]
For $0\le j<N$, the uniform $C H^{s_j-1}$ bound also gives
\[
 \|\partial_t^j\boldsymbol H_{\bv}\|_{L^2H^{s_j-1}}
 \le \tau^{1/2}C_R,
\]
while $\|\partial_t^N\boldsymbol H_{\bv}\|_{L^2L^2}\le\tau^{1/2}C_R$. These estimates account for every component of the norm in \eqref{eq:appendix-force-space}. Consequently
\begin{equation}\label{eq:self-map-high-closure}
 \|\boldsymbol H_{\bv}\|_{\mathbb F_{N,\tau}}
 \le C_R\tau^{1/2},
 \qquad
 \|\mathscr S\bv\|_{\mathbb V_{N,\tau}}
 \le C_0+C_R\tau^{1/2}.
\end{equation}
By \eqref{eq:self-map-high-closure}, choose $R=2C_0+1$ and then $\tau$ so that $C_R\tau^{1/2}\le R-C_0$. The image of $\mathbb X_{\tau,R}$ lies in the same set.

Contraction is imposed only in the lower norm $\mathbb E_\tau$, for which no higher boundary trace is required of the forcing term. Take $\bv_1,\bv_2\in\mathbb X_{\tau,R}$ and set
\[
 \boldsymbol d=\bv_1-\bv_2,
 \quad r_i=r_{\bv_i},\quad\theta_i=\Theta[\bv_i],
 \quad a_i=\mu(\theta_i),\quad
 \boldsymbol u_i=\mathscr S\bv_i,
 \quad\boldsymbol w=\boldsymbol u_1-\boldsymbol u_2.
\]
The density difference solves
\[
 \partial_t(r_1-r_2)+\bv_1\cdot\nabla(r_1-r_2)
 =-\boldsymbol d\cdot\nabla r_2,
 \qquad (r_1-r_2)(0)=0.
\]
Since $\diver\bv_1=0$ and $\bv_1|_{\partial\Omega}=0$, multiplication by $r_1-r_2$ gives
\[
 \frac12\frac{\mathrm d}{\mathrm dt}\|r_1-r_2\|_{L^2}^2
 =-\int_\Omega(\boldsymbol d\cdot\nabla r_2)(r_1-r_2)\,\mathrm d\bx
 \le \|\nabla r_2\|_{L^\infty}
       \|\boldsymbol d\|_{L^2}\|r_1-r_2\|_{L^2}.
\]
Hence, using $(r_1-r_2)(0)=0$ and the uniform bound $\|\nabla r_2\|_{L^\infty}\le C_R$,
\[
 \|r_1-r_2\|_{C([0,\tau];L^2)}
 \le C_R\int_0^\tau\|\boldsymbol d(t)\|_{L^2}\,\mathrm dt
 \le C_R\tau\|\boldsymbol d\|_{L^\infty(0,\tau;L^2)}.
\]
Combining this estimate with \eqref{eq:appendix-theta-bound} gives
\begin{equation}\label{eq:low-coefficient-differences}
\begin{gathered}
 \|r_1-r_2\|_{C L^2}
 \le C_R\tau\|\boldsymbol d\|_{L^\infty L^2},\quad
 \|\theta_1(t)-\theta_2(t)\|_{H^2}
 +\|a_1(t)-a_2(t)\|_{L^2}
 \le C_R\|\boldsymbol d(t)\|_{L^2}.
\end{gathered}
\end{equation}
With the coefficients $(r_1,a_1)$, the equation for $\boldsymbol w$ has zero initial value and force
\begin{equation}\label{eq:energy-contraction-force}
 \begin{aligned}
 \boldsymbol H_{12}={}&
 -(r_1-r_2)\partial_t\boldsymbol u_2
 -\mathcal L_{a_1-a_2}\boldsymbol u_2
 +(r_1-r_2)\bfv-(\theta_1-\theta_2)\bg\\
 &-(r_1-r_2)(\bv_1\cdot\nabla)\bv_1
 -r_2\bigl((\boldsymbol d\cdot\nabla)\bv_1
                   +(\bv_2\cdot\nabla)\boldsymbol d\bigr).
 \end{aligned}
\end{equation}
Every term in \eqref{eq:energy-contraction-force} has an $H^{-1}$ bound by $C_R(\|r_1-r_2\|_{L^2}+\|\boldsymbol d\|_{L^2})$. In particular, the stress term satisfies
\[
 \|\mathcal L_{a_1-a_2}\boldsymbol u_2\|_{H^{-1}}
 \le C_R\|a_1-a_2\|_{L^2}.
\]
For the last convection term, testing by $\bphi\in H_0^1$ gives
\[
 \begin{aligned}
 \int_\Omega r_2(\bv_2\cdot\nabla)\boldsymbol d\cdot\bphi\,\mathrm d\bx
 ={}&-\int_\Omega r_2\boldsymbol d\cdot
                        (\bv_2\cdot\nabla)\bphi\,\mathrm d\bx-\int_\Omega(\bv_2\cdot\nabla r_2)
                          \boldsymbol d\cdot\bphi\,\mathrm d\bx.
 \end{aligned}
\]
Since $s\ge7$, the three-dimensional Sobolev embedding supplies the required $L^\infty$ bounds for $\partial_t\boldsymbol u_2$, $\nabla r_2$ and the lower-order velocity derivatives. Together with $\tau\le1$, \eqref{eq:low-coefficient-differences} gives
\[
 \|\boldsymbol H_{12}\|_{L^2H^{-1}}
 \le C_R\tau^{1/2}\|\boldsymbol d\|_{\mathbb E_\tau}.
\]
The energy estimate \eqref{eq:linear-energy-difference} now yields
\begin{equation}\label{eq:effective-contraction}
 \|\mathscr S\bv_1-\mathscr S\bv_2\|_{\mathbb E_\tau}
 \le C_R\tau^{1/2}\|\bv_1-\bv_2\|_{\mathbb E_\tau}.
\end{equation}

By \eqref{eq:effective-contraction}, decrease $\tau$ further, if necessary, so that $\gamma=C_R\tau^{1/2}<1$, and start the iteration from $\bv^{(0)}=\bv^{\mathrm{ref}}$. For $n\ge0$, let $p^{(n+1)}$ denote the zero-mean pressure delivered by \Cref{prop:linear-effective-stokes} together with $\bv^{(n+1)}=\mathscr S\bv^{(n)}$, and set $p_j^{(n+1)}=\partial_t^j p^{(n+1)}$. Then
\[
 \|\bv^{(n+1)}-\bv^{(n)}\|_{\mathbb E_\tau}
 \le\gamma^n\|\bv^{(1)}-\bv^{(0)}\|_{\mathbb E_\tau}.
\]
Hence, for some $\uf$,
\[
 \bv^{(n)}\to\uf
 \quad\text{strongly in }\mathbb E_\tau.
\]
The uniform $\mathbb V_{N,\tau}$ bound also yields, after extraction,
\[
 \begin{aligned}
 \partial_t^j\bv^{(n)}&\rightharpoonup^*\partial_t^j\uf
 &&\text{in }L^\infty(0,\tau;H^{s_j}),\qquad 0\le j\le N,\\
 \partial_t^j\bv^{(n)}&\rightharpoonup\partial_t^j\uf
 &&\text{in }L^2(0,\tau;H^{s_j+1}),\qquad 0\le j\le N,\\
 \partial_t^{N+1}\bv^{(n)}&\rightharpoonup\partial_t^{N+1}\uf
 &&\text{in }L^2(0,\tau;L^2).
 \end{aligned}
\]
Distributional differentiation identifies the displayed weak limits. Moreover,
\[
 \partial_t^j\bv^{(n)}\rightharpoonup\partial_t^j\uf
 \quad\text{in }H^1(0,\tau;L^2),
 \qquad 0\le j\le N,
\]
where the case $j=N$ uses the $L^2L^2$ bound for $\partial_t^{N+1}\bv^{(n)}$. Continuity of the trace operator on $H^1(0,\tau;L^2)$ therefore gives
\[
 \partial_t^j\uf(0)=\boldsymbol U_j,
 \qquad 0\le j\le N.
\]
Weak and weak-* lower semicontinuity preserve the $L^\infty H^{s_j}$, $L^2H^{s_j+1}$ and top-order $L^2L^2$ bounds, with total norm at most $R$. These bounds, together with the strong convergence in $\mathbb E_\tau$, provide the compactness required to identify the nonlinear limit.

Interpolation of the energy convergence with the uniform $H^s$ bound gives strong convergence of the velocities in $L^\infty(0,\tau;H^{s-\eta})$ for $\eta>0$. Similarly, \eqref{eq:low-coefficient-differences} and the high transport bound give strong convergence of the densities in $L^\infty(0,\tau;H^{s+1-\eta})$. In particular,
\[
 r_{\bv^{(n)}}\to r_{\uf}\quad\text{in }L^\infty L^2,
 \qquad
 \Theta[\bv^{(n)}]\to\Theta[\uf]\quad\text{in }L^\infty H^2,
\]
where the second convergence follows directly from \eqref{eq:appendix-theta-bound} with $k=0$ and the $L^\infty L^2$ convergence of $\bv^{(n)}$. Since $H^2(\Omega)$ is an algebra in three dimensions and embeds into $L^\infty(\Omega)$, smooth composition gives
\[
 a_{\bv^{(n)}}=\mu(\Theta[\bv^{(n)}])
 \to a_{\uf}:=\mu(\Theta[\uf])
 \quad\text{strongly in }L^\infty(0,\tau;H^2(\Omega))
 \cap L^\infty((0,\tau)\times\Omega).
\]
The contraction also gives
$\cD(\bv^{(n+1)})\to\cD(\uf)$ strongly in $L^2((0,\tau)\times\Omega)$. Therefore
\[
 a_{\bv^{(n)}}\cD(\bv^{(n+1)})
 \to a_{\uf}\cD(\uf)
 \quad\text{strongly in }L^2((0,\tau)\times\Omega),
\]
and hence the variable-viscosity stress converges in $L^2(0,\tau;H^{-1})$ after taking one divergence. The strong velocity and density convergences, together with the uniform high-order bounds, likewise give
$r_{\bv^{(n)}}(\bv^{(n)}\cdot\nabla)\bv^{(n)}
\to r_{\uf}(\uf\cdot\nabla)\uf$ in distributions. By \eqref{eq:linear-pressure-hierarchy}, the pressure sequence is uniformly bounded in the corresponding spaces. After passing to a subsequence, there exist fields $p_j$ such that, for $0\le j<N$,
\[
 p_j^{(n)}\rightharpoonup^* p_j
 \quad\text{in }L^\infty(0,\tau;H^{s_j-1}(\Omega)),
 \qquad
 p_j^{(n)}\rightharpoonup p_j
 \quad\text{in }L^2(0,\tau;H^{s_j}(\Omega)),
\]
while $p_N^{(n)}\rightharpoonup p_N$ in $L^2(0,\tau;H^1(\Omega))$. Passing to the distributional identities $\partial_t p_j^{(n)}=p_{j+1}^{(n)}$ gives $p_j=\partial_t^j p_0$. Passing to the momentum equation then identifies $p_0$ with the zero-mean pressure associated with $\uf$. The zero-mean normalization is preserved under the weak convergence.

The weak limit satisfies
\[
 \partial_t^j\uf\in L^2H^{s_j+1}\cap H^1H^{s_j-1}
       \quad(j<N),
 \qquad
 \partial_t^N\uf\in L^2H^2\cap H^1L^2.
\]
The Lions--Magenes time-continuity theorem \cite[Proposition~2.1, p.~18, and Theorem~3.1, p.~19]{LionsMagenes} gives $\partial_t^j\uf\in C([0,\tau];H^{s_j})$ for $j\le N$. For these continuous representatives, the $C H^{s_j}$ norm equals the corresponding $L^\infty H^{s_j}$ norm. The lower-semicontinuous bounds and the trace identities therefore show that $\uf\in\mathbb X_{\tau,R}\subset\mathbb V_{N,\tau}$ as defined in \eqref{eq:appendix-velocity-space}. In particular, $\mathscr S\uf$ is well defined.

Since $\bv^{(n+1)}=\mathscr S\bv^{(n)}$ and both consecutive sequences have the same energy-norm limit, these convergences permit passage to the limit in every term of \eqref{eq:effective-linearized-stokes}. The limiting equation is precisely the equation defining $\mathscr S\uf$, and uniqueness for the linear problem gives $\mathscr S\uf=\uf$. Since $r_{\uf}$ satisfies $\partial_t r_{\uf}+\diver(r_{\uf}\uf)=0$, the identity
\[
 \partial_t(r_{\uf}\uf)+\diver(r_{\uf}\uf\otimes\uf)
 =r_{\uf}\bigl(\partial_t\uf+(\uf\cdot\nabla)\uf\bigr)
  +\uf\bigl(\partial_t r_{\uf}+\diver(r_{\uf}\uf)\bigr)
 =r_{\uf}\bigl(\partial_t\uf+(\uf\cdot\nabla)\uf\bigr)
\]
shows directly that the constructed momentum equation has the conservative form stated in \eqref{eq:effective-intro}. The transport equation supplies $r_{\uf}\in C H^{s+1}$, and the coefficient recursion verifies \eqref{eq:linear-coefficient-class} for the limit. Applying the linear proposition now recovers the pressure regularity in \eqref{eq:effective-high-regularity}. The characteristic flow preserves the density bounds.

We finish by proving uniqueness in the displayed class. Let
$(r_i,\boldsymbol u_i,\theta_i,p_i)$, $i=1,2$, be two solutions with the same initial data, and set
\[
 \delta r=r_1-r_2,\qquad
 \boldsymbol d=\boldsymbol u_1-\boldsymbol u_2,\qquad
 \delta\theta=\theta_1-\theta_2,\qquad
 \delta p=p_1-p_2.
\]
The transport and temperature differences satisfy
\[
 \partial_t\delta r+\boldsymbol u_1\cdot\nabla\delta r
 =-\boldsymbol d\cdot\nabla r_2,
 \qquad
 -\kf\Delta\delta\theta=\boldsymbol d\cdot\bg,
 \qquad \partial_{\bn}\delta\theta=0,\qquad
 \int_\Omega\delta\theta\,\mathrm d\bx=0.
\]
Consequently, the regularity of the two solutions gives
\begin{equation}\label{eq:effective-uniqueness-coefficients}
 \frac{\mathrm d}{\mathrm dt}\|\delta r\|_{L^2}^2
 \le C\bigl(\|\delta r\|_{L^2}^2+\|\boldsymbol d\|_{L^2}^2\bigr),
 \qquad
 \|\delta\theta\|_{H^2}
 +\|\mu(\theta_1)-\mu(\theta_2)\|_{L^\infty}
 \le C\|\boldsymbol d\|_{L^2}.
\end{equation}
Set
\[
 \boldsymbol A_2=\partial_t\boldsymbol u_2
                 +(\boldsymbol u_2\cdot\nabla)\boldsymbol u_2,
 \qquad a_i=\mu(\theta_i).
\]
Using the nonconservative form of the two regular momentum equations, their difference is
\begin{equation}\label{eq:effective-uniqueness-momentum}
\begin{aligned}
 r_1\bigl(\partial_t\boldsymbol d
          +(\boldsymbol u_1\cdot\nabla)\boldsymbol d\bigr)
 +\mathcal L_{a_1}\boldsymbol d+\nabla\delta p
 =-\delta r\,\boldsymbol A_2
     -r_1(\boldsymbol d\cdot\nabla)\boldsymbol u_2
     -\mathcal L_{a_1-a_2}\boldsymbol u_2
 +\delta r\,\bfv-\delta\theta\,\bg.
\end{aligned}
\end{equation}
Indeed,
$(\boldsymbol u_1\cdot\nabla)\boldsymbol u_1
 -(\boldsymbol u_2\cdot\nabla)\boldsymbol u_2
 =(\boldsymbol u_1\cdot\nabla)\boldsymbol d
  +(\boldsymbol d\cdot\nabla)\boldsymbol u_2$.
Since $\partial_t r_1+\diver(r_1\boldsymbol u_1)=0$, testing
\eqref{eq:effective-uniqueness-momentum} by $\boldsymbol d$ gives the exact weighted kinetic identity
\[
 \int_\Omega r_1\bigl(\partial_t\boldsymbol d
          +(\boldsymbol u_1\cdot\nabla)\boldsymbol d\bigr)
          \cdot\boldsymbol d\,\mathrm d\bx
 =\frac12\frac{\mathrm d}{\mathrm dt}
   \int_\Omega r_1|\boldsymbol d|^2\,\mathrm d\bx.
\]
The coercivity of $\mathcal L_{a_1}$ and Korn's inequality control
$\|\boldsymbol d\|_{H^1}^2$. The remaining terms satisfy, for $\eta>0$,
\[
 \begin{aligned}
 &\left|\int_\Omega
   \bigl[-\delta r\,\boldsymbol A_2
         -r_1(\boldsymbol d\cdot\nabla)\boldsymbol u_2
         +\delta r\,\bfv-\delta\theta\,\bg\bigr]
       \cdot\boldsymbol d\,\mathrm d\bx\right|
\le C\bigl(\|\delta r\|_{L^2}^2
                    +\|\boldsymbol d\|_{L^2}^2\bigr),\\
 &\left|\langle\mathcal L_{a_1-a_2}\boldsymbol u_2,
                     \boldsymbol d\rangle\right|
 \le C\|a_1-a_2\|_{L^2}\|\boldsymbol d\|_{H^1}
 \le \eta\|\boldsymbol d\|_{H^1}^2
      +C_\eta\|\boldsymbol d\|_{L^2}^2,
 \end{aligned}
\]
where \eqref{eq:effective-uniqueness-coefficients} and the displayed regularity of the two solutions were used. Choosing $\eta$ small and adding the density estimate in \eqref{eq:effective-uniqueness-coefficients} yield
\begin{equation}\label{eq:effective-uniqueness-energy}
 \frac{\mathrm d}{\mathrm dt}
 \left(\|\delta r\|_{L^2}^2
       +\int_\Omega r_1|\boldsymbol d|^2\,\mathrm d\bx\right)
 +c\|\boldsymbol d\|_{H^1}^2
 \le C\left(\|\delta r\|_{L^2}^2
       +\int_\Omega r_1|\boldsymbol d|^2\,\mathrm d\bx\right).
\end{equation}
Here $c>0$, and $C$ depends only on the bounded norms of the two solutions on $[0,T_*]$. The initial difference vanishes, so Gronwall's lemma applied to \eqref{eq:effective-uniqueness-energy} gives $\delta r=\boldsymbol d=0$ on $[0,T_*]$. The Neumann problem then gives $\delta\theta=0$, and the momentum equation together with the zero-mean pressure normalization gives $\delta p=0$. Thus the solution is unique in the asserted class.

Finally, $s-2=2N-1\ge m\ge5$, so the three-dimensional embedding $H^m\hookrightarrow W^{2,\infty}$ and the density bounds imply \Cref{ass:strong}. Taking $T_*=\tau$ completes the construction.
\end{proof}

\begin{remark}[Relation to the comparison hypothesis]\label{rem:effective-regularity}
Since $m\ge5$, the three-dimensional Sobolev embedding applied to the regularity stated in \Cref{thm:effective-local} verifies \Cref{ass:strong} on every interval $[0,T]$ with $0<T\le T_*$. The construction uses only finitely many norms of the smooth data.
\end{remark}

\begin{remark}[Compatibility and nonemptiness]\label{rem:effective-compatibility}
The first condition in \eqref{eq:nonlinear-jet-compatibility} is the usual no-slip condition for the initial acceleration. The subsequent conditions concern higher time jets determined by the data. For $m=5$, the construction uses $N=3$ and requires the full traces of $\boldsymbol U_1,\boldsymbol U_2,\boldsymbol U_3$ to vanish. These assumptions admit nontrivial time-dependent forces. For example, choose $\boldsymbol u_0=0$, any smooth positive $\rho_0$ and $\bfv(t,\bx)=t^N\boldsymbol b(\bx)$ with an independently chosen smooth field $\boldsymbol b$. Indeed, $\boldsymbol F_j=0$ for $0\le j<N$. If $\boldsymbol U_0,\ldots,\boldsymbol U_j$ vanish, then $R_\ell=0$ for $1\le\ell\le j+1$, $\Theta_\ell=0$ for $0\le\ell\le j$, and the recursion gives $\boldsymbol K_j=0$. Hence $\rho_0\boldsymbol U_{j+1}+\nabla P_j=0$, $\diver\boldsymbol U_{j+1}=0$ and $\boldsymbol U_{j+1}\cdot\bn=0$. Equivalently, $P_j$ solves the homogeneous weighted Neumann problem $\diver(\rho_0^{-1}\nabla P_j)=0$ with $\rho_0^{-1}\nabla P_j\cdot\bn=0$. The zero-mean normalization yields $P_j=0$ and therefore $\boldsymbol U_{j+1}=0$. Induction gives $\boldsymbol U_1=\cdots=\boldsymbol U_N=0$. The force is fixed in advance, and \Cref{thm:effective-local} produces the corresponding solution.
\end{remark}

\section{Global microscopic weak solutions}
\label{sec:micro-existence}

We prove \Cref{prop:microscopic-existence} here. Fix $\eps>0$ and write $D=\Oe$. Constants arising only in the Galerkin compactness argument may depend on $D$, whereas the energy and thermal estimates retain the uniformity of \Cref{lem:uniform}. We use the variable-density framework developed in \cite{Lions} and treat the Neumann transmission problem explicitly.

\begin{proof}[Proof of \Cref{prop:microscopic-existence}]
Let $H$ be the $L^2(D;\R^3)$ closure of the smooth compactly supported solenoidal fields, and put $V=H_{0,\sigma}^1(D)$. By the normal-trace characterization \cite[Theorem~III.2.3]{Galdi}, the assumptions in \eqref{eq:initial-data-assumptions} imply $\uzeroe\in H$. The argument uses the density theorem \cite[Theorem~III.4.2]{Galdi} and the de Rham pressure characterization \cite[Theorem~III.5.3]{Galdi}.

For $\bv\in H$, denote its zero extension by $\widetilde\bv$ and define $\cS_\eps\bv\in H^1(\Omega)$ variationally by
\begin{equation}\label{eq:micro-thermal-operator}
 \int_\Omega\ke\nabla\cS_\eps\bv\cdot\nabla\varphi\,\mathrm d\bx
 =\int_\Omega\widetilde\bv\cdot\bg\,\varphi\,\mathrm d\bx,
 \qquad \int_\Omega\cS_\eps\bv\,\mathrm d\bx=0,
\end{equation}
for any $\varphi\in H^1(\Omega)$. Since $\bg=\nabla F$ and the zero extension is solenoidal, the right-hand side vanishes for $\varphi\equiv1$. Lax--Milgram on the zero-mean space defines a bounded linear operator. Using Poincar\'e--Wirtinger and \Cref{lem:neumann-linf-general}, we obtain
\begin{equation}\label{eq:micro-thermal-lipschitz}
 \|\cS_\eps\bv-\cS_\eps\bw\|_{H^1(\Omega)}
 +\|\cS_\eps\bv-\cS_\eps\bw\|_{L^\infty(\Omega)}
 \le C\|\bv-\bw\|_{L^2(D)}.
\end{equation}
In addition, testing \eqref{eq:micro-thermal-operator} by $\cS_\eps\bv$ gives
\[
 \int_D(\cS_\eps\bv)\bg\cdot\bv\,\mathrm d\bx
 =\int_\Omega\ke|\nabla\cS_\eps\bv|^2\,\mathrm d\bx.
\]

By \cite[Theorem~III.4.2]{Galdi}, choose a countable subset of $C_{c,\sigma}^\infty(D;\R^3)$ that is dense in $V$ in the $H^1$ norm. After discarding linear dependencies, apply Gram--Schmidt with respect to the $H$ inner product. Since each finite span is unchanged by Gram--Schmidt, this produces an $H$-orthonormal sequence $\{\boldsymbol w_i\}_{i\ge1}\subset C_c^\infty(D;\R^3)$ of solenoidal fields whose span remains dense in $V$. Let $\mathbb P_n$ be the $H$-orthogonal projection onto $V_n=\operatorname{span}\{\boldsymbol w_1,\ldots, \boldsymbol w_n\}$. Choose smooth densities $r_n^0$ with $\rhomin\le r_n^0\le\rhomax$ and $r_n^0\to\rhozeroe$ in every finite $L^p(D)$. Set $\boldsymbol u_n(0)=\mathbb P_n\uzeroe$. For $\boldsymbol u_n(t)\in V_n$, transport $r_n$ by
\[
 \partial_t r_n+\boldsymbol u_n\cdot\nabla r_n=0,
 \qquad r_n(0)=r_n^0,
 \qquad \theta_n=\cS_\eps\boldsymbol u_n,
\]
and impose, for $1\le i\le n$,
\begin{equation}\label{eq:micro-galerkin}
\begin{aligned}
 \frac{\mathrm d}{\mathrm dt}\int_D
 r_n\boldsymbol u_n\cdot\boldsymbol w_i\,\mathrm d\bx
 ={}&\int_D r_n\boldsymbol u_n\otimes\boldsymbol u_n:
                       \nabla\boldsymbol w_i\,\mathrm d\bx-\int_D2\mu(\theta_n)\cD(\boldsymbol u_n):
                       \cD(\boldsymbol w_i)\,\mathrm d\bx\\
 &+\int_D(r_n\bfv-\theta_n\bg)\cdot\boldsymbol w_i\,\mathrm d\bx.
\end{aligned}
\end{equation}
The smooth finite-dimensional flow preserves the density bounds. The mass matrix $\mathsf M_{n,ij}(t)=\int_Dr_n\boldsymbol w_i\cdot\boldsymbol w_j \,\mathrm d\bx$ therefore satisfies $\rhomin I\le\mathsf M_n(t)\le\rhomax I$. For fixed $n$ and bounded coefficient curves, the transported smooth density depends locally Lipschitz-continuously on the curve, with
\[
 \|r[\bv]-r[\bw]\|_{C([0,t];L^\infty(D))}
 \le C_{n,R}t\|\bv-\bw\|_{C([0,t];L^2(D))}.
\]
Together with \eqref{eq:micro-thermal-lipschitz} and the Lipschitz continuity of $\mu$, this yields local solvability of the finite-dimensional integral system by Picard iteration.

Testing \eqref{eq:micro-galerkin} by $\boldsymbol u_n$ and using the transport equation gives the exact identity
\begin{equation}\label{eq:micro-galerkin-energy}
\begin{aligned}
 &\frac12\int_Dr_n(t)|\boldsymbol u_n(t)|^2\,\mathrm d\bx
 +\int_0^t\!\int_D2\mu(\theta_n)|\cD(\boldsymbol u_n)|^2
                                      \,\mathrm d\bx\,\mathrm ds+\int_0^t\!\int_\Omega\ke|\nabla\theta_n|^2\,\mathrm d\bx\,\mathrm ds\\
  &\quad=\frac12\int_Dr_n^0|\mathbb P_n\uzeroe|^2\,\mathrm d\bx +\int_0^t\!\int_Dr_n\bfv\cdot\boldsymbol u_n \,\mathrm d\bx\,\mathrm ds.
\end{aligned}
\end{equation}
Using the Poincar\'e--Korn and Young estimates from \Cref{lem:uniform}, whose constants are independent of the Galerkin dimension $n$, we obtain
\begin{equation}\label{eq:micro-galerkin-bounds}
 \|\boldsymbol u_n\|_{L^\infty(0,T;H)}
 +\|\boldsymbol u_n\|_{L^2(0,T;V)}
 +\|\theta_n\|_{L^\infty(0,T;H^1(\Omega))}
 \le C\bigl(1+\|\uzeroe\|_{L^2(D)}\bigr).
\end{equation}
For fixed $n$, these bounds control every velocity coefficient on a finite time interval. The flow estimate $\|\nabla r_n(t)\|_\infty\le\|\nabla r_n^0\|_\infty \exp(\int_0^t\|\nabla\boldsymbol u_n\|_\infty\,\mathrm ds)$ then prevents finite-time loss of the density regularity needed in the approximate system. Thus the approximation extends to $[0,T]$.

To recover kinetic-energy convergence through the momentum coordinates, we obtain strong compactness of the density first, without presupposing strong convergence of the velocity. After extraction,
\[
 r_n\rightharpoonup^\ast r\quad\text{in }L^\infty((0,T)\times D),
 \qquad
 \boldsymbol u_n\rightharpoonup\boldsymbol u
                  \quad\text{in }L^2(0,T;V).
\]
Since $V$ is a closed linear subspace of $H_0^1(D;\R^3)$, it is weakly closed. Hence, the limit remains divergence free and has zero trace. Testing the continuity equation against $H^1(D)$ functions gives
\[
 \|\partial_t r_n\|_{L^\infty(0,T;(H^1(D))^*)}\le C.
\]
The compact embedding $L^2(D)\hookrightarrow\!\hookrightarrow (H^1(D))^*$ and Arzel\`a--Ascoli imply
\begin{equation}\label{eq:micro-density-negative}
 r_n\to r\quad\text{in }C([0,T];(H^1(D))^*).
\end{equation}
For $\psi\in C_c^\infty((0,T)\times D)$, the product $\boldsymbol u_n\cdot\nabla\psi$ is bounded in $L^2(0,T;H^1(D))$. Consequently,
\[
 \left|\int_0^T\langle r_n-r,
              \boldsymbol u_n\cdot\nabla\psi\rangle\,\mathrm dt\right|
 \le C_\psi\|r_n-r\|_{C(H^1)^*}
                 \|\boldsymbol u_n\|_{L^2V}\to0.
\]
The remaining term passes by weak convergence of $\boldsymbol u_n$. Hence $\partial_t r+\diver(r\boldsymbol u)=0$ with initial value $\rhozeroe$.

Define the extensions to $\R^3$ by
\[
 \overline r(t,\bx)=
 \begin{cases}
 r(t,\bx),&\bx\in D,\\
 \rhomin,&\bx\in\R^3\setminus D,
 \end{cases}
 \qquad
 \overline{\boldsymbol u}(t,\bx)=
 \begin{cases}
 \boldsymbol u(t,\bx),&\bx\in D,\\
 \boldsymbol0,&\bx\in\R^3\setminus D.
 \end{cases}
\]
Then $\overline{\boldsymbol u}\in L^1(0,T;W^{1,2}(\R^3;\R^3))$ and $\diver\overline{\boldsymbol u}=0$ in distributions. The boundary cutoff argument used in \Cref{lem:density}, now applied on $D$, shows that
\[
 \partial_t\overline r+\diver(\overline r\,\overline{\boldsymbol u})=0
 \quad\text{in }\mathcal D'((0,T)\times\R^3).
\]
Indeed, if $d_D$ is the distance to $\partial D$ and $\chi_h$ cuts off a strip of width $h$, Hardy's inequality gives
\[
 \left|\int_0^T\!\int_Dr\boldsymbol u\cdot\nabla\chi_h\,
                  \psi\,\mathrm d\bx\,\mathrm dt\right|
 \le C h^{1/2}\rhomax\|\psi\|_\infty
                       \|\nabla\boldsymbol u\|_{L^1L^2}\to0.
\]
Extend $\overline r$ and $\overline{\boldsymbol u}$ to $t<0$ by $\overline r(t)=\overline{\rhozeroe}$ and $\overline{\boldsymbol u}(t)=\boldsymbol0$, where $\overline{\rhozeroe}=\rhozeroe$ in $D$ and $\overline{\rhozeroe}=\rhomin$ in $\R^3\setminus D$. The distributional transport equation then carries the initial value at $t=0$. Applying the renormalization theory of \cite[Section~II]{DiPernaLions} to the compactly supported fluctuation $\overline r-\rhomin$ yields $r\in C([0,T];L^p(D))$ for $1\le p<\infty$, the density bounds and
\[
 \|r(t)\|_{L^2(D)}^2=\|\rhozeroe\|_{L^2(D)}^2,
 \qquad
 \|r_n(t)\|_{L^2(D)}^2=\|r_n^0\|_{L^2(D)}^2.
\]
Together with \eqref{eq:micro-density-negative}, equality of these norms implies
\begin{equation}\label{eq:micro-density-strong}
 r_n\to r\quad\text{in }C([0,T];L^p(D)),
 \qquad 1\le p<\infty.
\end{equation}
Uniformity in time for $p=2$ follows by considering any sequence $t_n\to t$. Equation \eqref{eq:micro-density-negative} gives $r_n(t_n)\rightharpoonup r(t)$ in $L^2$, while $\|r_n(t_n)\|_{L^2(D)}\to\|r(t)\|_{L^2(D)}$. Thus $r_n(t_n)\to r(t)$ strongly. A compactness argument in $[0,T]$ and the continuity of $r$ give uniform convergence. The other finite exponents follow from the uniform $L^\infty$ bounds.

Strong velocity compactness is then recovered through momentum coordinates. For fixed $i$ and $n\ge i$, put
\[
 m_{n,i}(t)=\int_Dr_n\boldsymbol u_n\cdot\boldsymbol w_i\,\mathrm d\bx.
\]
Equation \eqref{eq:micro-galerkin} and \eqref{eq:micro-galerkin-bounds} give $\|m_{n,i}\|_{W^{1,2}(0,T)}\le C_i$. A diagonal subsequence therefore satisfies $m_{n,i}\to m_i$ in $C([0,T])$. By \eqref{eq:micro-density-strong} and weak velocity convergence,
\[
 m_i(t)=\int_Dr\boldsymbol u\cdot\boldsymbol w_i\,\mathrm d\bx
 \quad\text{for a.e. }t\in(0,T).
\]
The compact embedding $V\hookrightarrow\!\hookrightarrow H$ gives numbers $\omega_K\to0$ such that
\[
 \|(I-\mathbb P_K)\bv\|_H\le\omega_K\|\bv\|_V
 \qquad\text{for any }\bv\in V.
\]
Indeed, $\mathbb P_K\to I$ strongly on $H$ and the projections are contractions, so the convergence is uniform on the compact closure in $H$ of the unit ball of $V$. It follows that
\[
 \left|\int_0^T\!\int_Dr_n\boldsymbol u_n\cdot
                  (I-\mathbb P_K)\boldsymbol u_n\,\mathrm d\bx\,\mathrm dt\right|
 \le C\omega_K.
\]
For fixed $K$, finite-dimensional expansion gives
\[
 \int_0^T\!\int_Dr_n\boldsymbol u_n\cdot \mathbb P_K\boldsymbol u_n
                         \,\mathrm d\bx\,\mathrm dt
 =\sum_{i=1}^K\int_0^Tm_{n,i}(t)
                  (\boldsymbol u_n(t),\boldsymbol w_i)_H\,\mathrm dt
 \to\sum_{i=1}^K\int_0^Tm_i(t)
                  (\boldsymbol u(t),\boldsymbol w_i)_H\,\mathrm dt.
\]
The same projection-tail estimate holds for the weak limit:
\[
 \left|\int_0^T\!\int_D r\boldsymbol u\cdot(I-\mathbb P_K)\boldsymbol u\,\mathrm d\bx\,\mathrm dt\right|
 \le C\omega_K.
\]
Consequently, first letting $n\to\infty$ at fixed $K$ and then $K\to\infty$ proves convergence of the integrated kinetic energies. In addition, $r_n\boldsymbol u\to r\boldsymbol u$ in $L^2((0,T)\times D)$ by the almost everywhere convergence of $r_n$, their uniform $L^\infty$ bound and dominated convergence. Hence
\[
 \int_0^T\!\int_D r_n|\boldsymbol u_n-\boldsymbol u|^2
 =\int_0^T\!\int_D r_n|\boldsymbol u_n|^2
 -2\int_0^T\!\int_D r_n\boldsymbol u_n\cdot\boldsymbol u
 +\int_0^T\!\int_D r_n|\boldsymbol u|^2\to0,
\]
where the middle term converges by weak convergence of $\boldsymbol u_n$ against the strongly convergent coefficient $r_n\boldsymbol u$. Therefore
\begin{equation}\label{eq:micro-velocity-strong}
 \rhomin\|\boldsymbol u_n-\boldsymbol u\|_{L^2((0,T)\times D)}^2
 \le\int_0^T\!\int_Dr_n|\boldsymbol u_n-\boldsymbol u|^2
                         \,\mathrm d\bx\,\mathrm dt\to0.
\end{equation}
The momentum coordinates and the projection-tail estimate therefore provide the compactness needed for the convective term.

It remains to pass to the limit in the thermal coupling and the energy inequality. By \eqref{eq:micro-thermal-lipschitz} and \eqref{eq:micro-velocity-strong},
\[
 \theta_n\to\theta:=\cS_\eps\boldsymbol u
       \quad\text{in }L^2(0,T;H^1(\Omega))
                    \cap L^2(0,T;L^\infty(\Omega)).
\]
Thus $\mu(\theta_n)\to\mu(\theta)$ in $L^2L^\infty$ and a.e.\ in $(0,T)\times\Omega$, with uniform positive upper and lower bounds. In particular, for any $\bphi\in C_c^\infty((0,T)\times D;\R^3)$ with $\diver\bphi=0$,
\[
 \bigl(\mu(\theta_n)-\mu(\theta)\bigr)\cD(\bphi)
 \to0
 \quad\text{in }L^2((0,T)\times D),
\]
so the viscous term in the weak momentum equation passes to the limit against fixed tests. After passing to a subsequence converging a.e.\ in $(0,T)\times\Omega$, dominated convergence against any fixed $L^2$ test field also shows
\[
 \sqrt{\mu(\theta_n)}\,\cD(\boldsymbol u_n)
 \rightharpoonup\sqrt{\mu(\theta)}\,\cD(\boldsymbol u)
                   \quad\text{in }L^2((0,T)\times D).
\]
Moreover,
\[
 r_n\boldsymbol u_n\otimes\boldsymbol u_n
 \to r\boldsymbol u\otimes\boldsymbol u
 \quad\text{in }L^1((0,T)\times D).
\]
These convergences are sufficient to pass to the limit in every term of \eqref{eq:micro-galerkin}. To extend the limit identity from finite sums to the stated test class, note that
\[
 \boldsymbol u\otimes\boldsymbol u\in L^1(0,T;L^3(D)),
 \qquad L^2(D)\hookrightarrow L^{3/2}(D).
\]
Thus the convection functional is continuous for test fields in $C([0,T];V)$, while the momentum time term is continuous when their time derivatives converge in $L^1(0,T;H)$. More precisely, for divergence-free $\bphi\in C_c^1([0,T)\times D;\R^3)$, the density of $\operatorname{span}\{\boldsymbol w_i\}$ in $V$ and standard Bochner approximation provide finite sums
\[
 \bphi_m(t)=\sum_{i=1}^{N_m}\eta_{m,i}(t)\boldsymbol w_i,
 \qquad \eta_{m,i}\in C_c^1([0,T)),
\]
such that
\[
 \bphi_m\to\bphi\quad\text{in }C([0,T];V),
 \qquad
 \partial_t\bphi_m\to\partial_t\bphi
 \quad\text{in }L^1(0,T;H).
\]
In particular, $\bphi_m(0)\to\bphi(0)$ in $H$. The convection, viscous, forcing, momentum-time and initial terms are continuous under these convergences, so passing $m\to\infty$ yields \eqref{eq:weak-momentum} for the stated test class. Renormalization yields \eqref{eq:renormalized-continuity}, and \eqref{eq:micro-thermal-operator} gives \eqref{eq:weak-heat}. Since the extended velocity vanishes in the holes, the fixed-domain density extension equals $\rhomin$ there.

The initial energies in \eqref{eq:micro-galerkin-energy} converge since
\[
 r_n^0\to\rhozeroe
 \quad\text{in every finite }L^p(D),
 \qquad
 \mathbb P_n\uzeroe\to\uzeroe
 \quad\text{in }H.
\]
After passing to a further subsequence, the strong space--time convergence yields strong convergence of the corresponding time slices for a.e. $t\in(0,T)$. In addition,
\[
 \sqrt{\mu(\theta_n)}\,\cD(\boldsymbol u_n)
 \rightharpoonup
 \sqrt{\mu(\theta)}\,\cD(\boldsymbol u)
 \quad\text{in }L^2((0,T)\times D),
\]
and therefore weak lower semicontinuity gives, for a.e. $t\in(0,T)$,
\[
 \int_0^t\!\int_D2\mu(\theta)|\cD(\boldsymbol u)|^2
 \,\mathrm d\bx\,\mathrm ds
 \le\liminf_{n\to\infty}
 \int_0^t\!\int_D2\mu(\theta_n)|\cD(\boldsymbol u_n)|^2
 \,\mathrm d\bx\,\mathrm ds.
\]
The strong convergence $\theta_n\to\theta$ in $L^2(0,T;H^1(\Omega))$ also gives
\[
 \sqrt{\ke}\,\nabla\theta_n
 \to
 \sqrt{\ke}\,\nabla\theta
 \quad\text{strongly in }L^2((0,T)\times\Omega),
\]
and hence
\[
 \sup_{0\le t\le T}
 \left|
 \int_0^t\!\int_\Omega
 \ke\bigl(|\nabla\theta_n|^2-|\nabla\theta|^2\bigr)
 \,\mathrm d\bx\,\mathrm ds
 \right|\to0.
\]
Together with convergence of the initial energy and the work term, this yields \eqref{eq:combined-energy}.

It remains to identify a weakly continuous velocity representative. For $\boldsymbol w\in H$ and Galerkin level $n$, the density bound and \eqref{eq:micro-galerkin-bounds} give, uniformly in $n$ and for a.e. $t\in(0,T)$,
\begin{equation}\label{eq:approximate-momentum-dual-bound}
 \left|\int_D r_n(t)\boldsymbol u_n(t)\cdot\boldsymbol w\,\mathrm d\bx\right|
 \le\rhomax\norm{\boldsymbol u_n(t)}{H}\norm{\boldsymbol w}{H}
 \le C\norm{\boldsymbol w}{H}.
\end{equation}
If $\boldsymbol w$ is a finite sum of the Galerkin basis fields, the left-hand side is a continuous momentum coordinate. Hence \eqref{eq:approximate-momentum-dual-bound} extends from a.e. $t$ to every $t\in[0,T]$ for such $\boldsymbol w$. For a finite sum $\boldsymbol w=\sum_{i=1}^N a_i\boldsymbol w_i$, the uniform convergence of the momentum coordinates therefore gives
\[
 \left|\sum_{i=1}^N a_i m_i(t)\right|
 \le C\norm{\boldsymbol w}{H},
 \qquad t\in[0,T].
\]
Hence the limiting momentum coordinates define, for each $t$, a bounded functional on the dense subspace $\operatorname{span}\{\boldsymbol w_i\}$ and thus extend uniquely to a bounded functional on $H$. Since $r\in C([0,T];L^p(D))$ for $1\le p<\infty$ and $\rhomin\le r\le\rhomax$, the bilinear form
\[
 (\bv,\bw)\longmapsto\int_Dr(t)\bv\cdot\bw \,\mathrm d\bx
\]
is bounded and coercive on $H$. The Lax--Milgram theorem therefore gives a unique representative $\boldsymbol u(t)\in H$ with a bound uniform in $t$. If $t_k\to t$, this uniform $H$ bound permits extraction of a subsequence $\boldsymbol u(t_k)\rightharpoonup\boldsymbol v$ in $H$, while $r(t_k)\to r(t)$ strongly in every finite $L^p(D)$. Passing to the continuous momentum coordinates gives
\[
 \int_D r(t)\boldsymbol v\cdot\boldsymbol w_i\,\mathrm d\bx
 =m_i(t)
 =\int_D r(t)\boldsymbol u(t)\cdot\boldsymbol w_i\,\mathrm d\bx,
 \qquad1\le i\le N.
\]
Density of $\operatorname{span}\{\boldsymbol w_i\}$ in $H$ and $r(t)\ge\rhomin$ imply $\boldsymbol v=\boldsymbol u(t)$. Hence the whole sequence converges weakly, and
\[
 t\longmapsto\boldsymbol u(t)
 \quad\text{is weakly continuous from }[0,T]\text{ into }H.
\]
The same argument at $t=0$, using the initial momentum coordinates, gives $\boldsymbol u(0)=\uzeroe$. The pressure is recovered in the sense of distributions by the de Rham characterization \cite[Theorem~III.5.3]{Galdi}. Consequently, $(r,\boldsymbol u,\theta)$ satisfies all requirements of the weak-solution class in \Cref{sec:setting}, completing the proof of \Cref{prop:microscopic-existence}.
\end{proof}


\begin{thebibliography}{99}
\small
\setlength{\itemsep}{2pt}

\bibitem{AllaireI} G.~Allaire, Homogenization of the Navier--Stokes equations in open sets perforated with tiny holes. I. Abstract framework, a volume distribution of holes, \emph{Arch. Ration. Mech. Anal.} \textbf{113} (1991), no.~3, 209--259, \doi{10.1007/BF00375065}.

\bibitem{AllaireII} G.~Allaire, Homogenization of the Navier--Stokes equations in open sets perforated with tiny holes. II.\@ Noncritical sizes of the holes for a volume distribution and a surface distribution of holes, \emph{Arch. Ration. Mech. Anal.} \textbf{113} (1991), no.~3, 261--298, \doi{10.1007/BF00375066}.

\bibitem{BO2022} P.~Bella and F.~Oschmann, Homogenization and low Mach number limit of compressible Navier--Stokes equations in critically perforated domains, \emph{J. Math. Fluid Mech.} \textbf{24} (2022), no.~3, Art.~79, \doi{10.1007/s00021-022-00707-1}.

\bibitem{BOP} D.~Basari\'c, F.~Oschmann and J.~Pan, Qualitative derivation of a density-dependent incompressible Darcy law, \emph{Sci. China Math.} \textbf{69} (2026), 1--24, \doi{10.1007/s11425-025-2540-2}.

\bibitem{CBSV} T.~Crin-Barat, S.~\v Skondri\'c and A.~Violini, Relative energy method for weak--strong uniqueness of the inhomogeneous Navier--Stokes equations far from vacuum, \emph{J. Evol. Equ.} \textbf{25} (2025), Art.~6, \doi{10.1007/s00028-024-01036-8}.

\bibitem{ChoeKim} H.~J.~Choe and H.~Kim, Strong solutions of the Navier--Stokes equations for nonhomogeneous incompressible fluids, \emph{Comm. Partial Differential Equations} \textbf{28} (2003), no.~5--6, 1183--1201, \doi{10.1081/PDE-120021191}.

\bibitem{ChoKim} Y.~Cho and H.~Kim, Unique solvability for the density-dependent Navier--Stokes equations, \emph{Nonlinear Anal.} \textbf{59} (2004), no.~4, 465--489, \doi{10.1016/j.na.2004.07.020}.

\bibitem{DFL} L.~Diening, E.~Feireisl and Y.~Lu, The inverse of the divergence operator on perforated domains with applications to homogenization problems for the compressible Navier--Stokes system, \emph{ESAIM Control Optim. Calc. Var.} \textbf{23} (2017), no.~3, 851--868, \doi{10.1051/cocv/2016016}.

\bibitem{DiPernaLions} R.~J.~DiPerna and P.-L.~Lions, Ordinary differential equations, transport theory and Sobolev spaces, \emph{Invent. Math.} \textbf{98} (1989), 511--547, \doi{10.1007/BF01393835}.

\bibitem{FLS} E.~Feireisl, Y.~Lu and Y.~Sun, Homogenization of a non-homogeneous heat conducting fluid, \emph{Asymptot. Anal.} \textbf{125} (2021), no.~3--4, 327--346, \doi{10.3233/ASY-201658}.

\bibitem{Galdi} G.~P.~Galdi, \emph{An Introduction to the Mathematical Theory of the Navier--Stokes Equations: Steady-State Problems}, 2nd ed., Springer, New York, 2011, \doi{10.1007/978-0-387-09620-9}.

\bibitem{HJ2024} R.~M.~H\"ofer and J.~Jansen, Convergence rates and fluctuations for the Stokes--Brinkman equations as homogenization limit in perforated domains, \emph{Arch. Ration. Mech. Anal.} \textbf{248} (2024), Art.~50, \doi{10.1007/s00205-024-01993-x}.

\bibitem{Hoefer2023} R.~M.~H\"ofer, Homogenization of the Navier--Stokes equations in perforated domains in the inviscid limit, \emph{Nonlinearity} \textbf{36} (2023), no.~11, 6020--6047, \doi{10.1088/1361-6544/acfe56}.

\bibitem{JLP} W.~Jing, Y.~Lu and C.~Prange, Unified quantitative analysis of the Stokes equations in dilute perforated domains via layer potentials, \emph{Multiscale Model. Simul.} \textbf{23} (2025), no.~3, 1145--1182, \doi{10.1137/24M1696925}.

\bibitem{Lions} P.-L.~Lions, \emph{Mathematical Topics in Fluid Mechanics. Vol.~1: Incompressible Models}, Oxford Lecture Series in Mathematics and its Applications, vol.~3, Clarendon Press, Oxford, 1996.

\bibitem{LionsMagenes} J.-L.~Lions and E.~Magenes, \emph{Non-Homogeneous Boundary Value Problems and Applications. Vol.~I}, Grundlehren der mathematischen Wissenschaften, vol.~181, Springer-Verlag, Berlin--Heidelberg--New York, 1972, \doi{10.1007/978-3-642-65161-8}.

\bibitem{Lu2020} Y.~Lu, Homogenization of Stokes equations in perforated domains: a unified approach, \emph{J. Math. Fluid Mech.} \textbf{22} (2020), Art.~44, \doi{10.1007/s00021-020-00506-6}.

\bibitem{LPW2026} Y.~Lu, J.~Pan and L.~Wang, Homogenization of a non-homogeneous incompressible heat-conducting fluid in perforated domains, \emph{arXiv preprint} arXiv:2607.11036 [math.AP] (2026), \doi{10.48550/arXiv.2607.11036}.

\bibitem{Pan2025} J.~Pan, Homogenization of non-homogeneous incompressible Navier--Stokes system in critically perforated domains, \emph{J. Math. Fluid Mech.} \textbf{27} (2025), no.~2, Art.~24, \doi{10.1007/s00021-025-00931-5}.

\bibitem{SolonnikovStokes} V.~A.~Solonnikov, On estimates of solutions of the non-stationary Stokes problem in anisotropic Sobolev spaces and on estimates for the resolvent of the Stokes operator, \emph{Russian Math. Surveys} \textbf{58} (2003), no.~2, 331--365, \doi{10.1070/RM2003v058n02ABEH000613}.

\bibitem{Stampacchia} G.~Stampacchia, Le probl\`eme de Dirichlet pour les \'equations elliptiques du second ordre \`a coefficients discontinus, \emph{Ann. Inst. Fourier (Grenoble)} \textbf{15} (1965), no.~1, 189--257, \doi{10.5802/aif.204}.

\end{thebibliography}
\end{document}